\newtheorem{lemma}{Lemma}[section]
\newtheorem{definition}{Definition}[section]
\newtheorem{assumption}{Assumption}[section]
\newtheorem{theorem}{Theorem}[section]
\newtheorem{corollary}{Corollary}[section]
\newtheorem{proposition}{Proposition}[section]
\newtheorem{remark}{Remark}[section]
\numberwithin{equation}{section}
\theoremstyle{definition}
\newtheorem{example}{Example}[section]
\newcommand{\ba}{\begin{array}}
\newcommand{\ea}{\end{array}}
\newcommand{\beq}{\begin{equation}}
\newcommand{\eeq}{\end{equation}}
\newcommand{\beqa}{\begin{eqnarray}}
\newcommand{\eeqa}{\end{eqnarray}}
\newcommand{\beqas}{\begin{eqnarray*}}
\newcommand{\eeqas}{\end{eqnarray*}}
\newcommand{\bi}{\begin{itemize}}
\newcommand{\ei}{\end{itemize}}
\newcommand{\gap}{\hspace*{2em}}
\def\vgap{\vspace*{.1in}}
\def\eqref#1{(\ref{#1})}
\def\Argmin{{\rm Argmin}}
\def\QED{\ifhmode\unskip\nobreak\fi\ifmmode\ifinner\else\hskip5pt\fi\fi
  \hbox{\hskip5pt\vrule width5pt height5pt depth1.5pt\hskip1pt}}
\def\cS{{\cal S}}
\def\dist{{\rm dist}}
\def\cB{{\bf B}}
\def\cK{{\cal K}}
\def\cN{{\cal N}}
\def\cS{{\cal S}}
\def\eps{{\epsilon}}
\def\xfeas{{x^{\rm feas}}}
\def\cS{{\Omega}}
\def\cU{{\mathcal{U}}}
\def\R{{\rm I\!R}}
\def\Argmin{\mathop{\rm Arg\,min}}
\title{\sf Penalty methods for a class of non-Lipschitz optimization problems}
\author{Xiaojun Chen\thanks{Department of Applied Mathematics, The Hong Kong Polytechnic
University, Hong Kong, China.  E-mail: {\tt maxjchen@polyu.edu.hk}.  This author's work is supported partly by Hong Kong Research Grant Council grant PolyU5001/12p.} \and Zhaosong Lu\thanks{Department of Mathematics, Simon Fraser University, Burnaby, BC, V5A 1S6, Canada. Email: {\tt zhaosong@sfu.ca}. This author's work is supported in part by NSERC Discovery Grant.} \and Ting Kei Pong\thanks{Department of Applied Mathematics, The Hong Kong Polytechnic
University, Hong Kong, China.  E-mail: {\tt tk.pong@polyu.edu.hk}. This author's work is supported partly by Hong Kong Research Grants Council PolyU253008/15p.} }
\date{June 26, 2015 (Revised: April 3, 2016)}
\begin{document}
\maketitle

\begin{abstract}
We consider a class of constrained optimization problems with a possibly nonconvex non-Lipschitz objective and a convex feasible set being the intersection of a polyhedron and a possibly degenerate ellipsoid. Such problems have a wide range of applications in data science, where the objective is used for inducing sparsity in the solutions while the constraint set models the noise tolerance and incorporates other prior information for data fitting. To solve this class of constrained optimization problems, a common approach is the penalty method. However,
there is little theory on exact penalization for problems with nonconvex and non-Lipschitz objective functions. In this paper, we study the existence of exact penalty parameters regarding local minimizers, stationary points and $\epsilon$-minimizers under suitable assumptions. Moreover, we discuss a penalty method whose subproblems are solved via a nonmonotone proximal gradient method with
a suitable update scheme for the penalty parameters, and prove the convergence of the algorithm to a KKT point of the constrained problem. Preliminary numerical results demonstrate the efficiency of the penalty method for finding sparse solutions of underdetermined linear systems.

\end{abstract}

{\bf Keywords:} Exact penalty,  proximal gradient method, sparse solution,

 \hspace{0.8in} nonconvex optimization, non-Lipschitz optimization.

{\bf MSC2010 Classification:} 90C30, 90C26.

\section{Introduction}

  We consider the following constrained optimization problem:
  \begin{equation}\label{eq:P0}
    \begin{array}{rl}
     \min\limits_x & \Phi(x)\\
      {\rm s.t.} & x \in S:= S_1\cap S_2,
    \end{array}
  \end{equation}
  where $\Phi:\R^n\to\R$ is a nonnegative continuous function, $S_1\subseteq \R^n$ is a simple polyhedron, and
  $$S_2=\{x :\;  \|Ax-b\| \le \sigma, \,\,  \, \,  Bx\le h \, \}. $$
        Here $A\in \R^{m \times n}, b\in \R^m, \sigma \ge0, B\in \R^{\ell \times n}$ and $h\in \R^\ell$
      %  $l\in \overline{R}^n$, $u\in \overline{R}^n$, $l < u$  ($\overline{R}=[-\infty, \infty] $\cite{Rock98})
       are given matrices and vectors.
  We emphasize that $\Phi$ is neither necessarily convex nor locally Lipschitz continuous. To avoid triviality, we suppose that the feasible region $S$ is nonempty.

  Problem \eqref{eq:P0} is flexible enough to accommodate a wide range of optimization models with important applications in imaging sciences, signal
  processing, and statistical variable selections, etc. For example,
  with $S_1=\R^n$ and $B$ being vacuous, i.e., $S = S_2 = \{x:\; \|Ax - b\|\le \sigma\}$, problem \eqref{eq:P0} reduces to the following problem
  \begin{equation}\label{eq:P00}
    \begin{array}{rl}
     \min\limits_{x} & \Phi(x)\\
      {\rm s.t.} & \|Ax-b\|\le \sigma.
    \end{array}
  \end{equation}
  This problem with $\Phi(x)=\|x\|_1$ has been studied extensively for recovering sparse signals from the possibly noisy measurements $b$; here, the parameter $\sigma$ allows the user to {\em explicitly} specify the tolerance for the noise level.
% In the case when
%  the observed data $b$ contains noise, setting $\sigma >0$, an error-tolerant version of
%  problem \eqref{eq:P00} is
%  \begin{equation}\label{eq:P00p}
%    \begin{array}{rl}
%     \min\limits_{x} & \|x\|^p_p\\
%      {\rm s.t.} & \|Ax-b\|\le \sigma
%    \end{array}
%  \end{equation}
%%  Problems (\ref{eq:P00})-(\ref{eq:P00p}) have been extensively used
%%  to sparse modeling of signals and images.
We refer the readers to the comprehensive review \cite{BDE} for more details.
In addition, we emphasize that the objective function $\Phi$ in our model \eqref{eq:P0} is allowed to be nonsmooth and possibly nonconvex non-Lipschitz. This enables the choice of various objective functions for inducing desirable structures in the optimal solutions. For instance, when sparsity is of concern, one popular choice of $\Phi$ is $\Phi(x)=\sum^n_{i=1} \phi(x_i)$, with $\phi$ being one of the widely used penalty functions, such as the bridge penalty \cite{HuangHorowitzMa08,KnightFu00}, the fraction penalty \cite{GemanReynolds92} and the logistic penalty \cite{NiNgZhCh08}.
% \begin{enumerate}
%       \item bridge penalty \cite{HuangHorowitzMa08,KnightFu00}: $\phi(t) = |t|^p$ for $0<p<1$,
%       \item fraction penalty \cite{GemanReynolds92}:  $\phi(t) = \alpha|t|/(1 + \alpha|t|)$ for $\alpha > 0$,
%       \item logistic penalty \cite{NiNgZhCh08}: $\phi(t) = \log(1 + \alpha|t|)$ for $\alpha > 0$.
%%       \item hard thresholding penalty function \cite{FanJ}: $\phi(t)=[1 - (1-\lambda |t|)^2_+]/\lambda$,
%%       \item smoothly clipped absolute deviation \cite{JFan}:
%%${\displaystyle \phi(t)=\int^{|t|}_0 \min(1,
%%(\alpha -\lambda s)_+/(\alpha-1))ds}$,
%%\item minimax concave penalty \cite{CHZhang}: ${\displaystyle  \phi(t)=
%%\int^{|t|}_0(1-\lambda s/\alpha)_+ds}$.
% \end{enumerate}
On the other hand, we note that the simple polyhedron $S_1$ can be used for incorporating hard constraints/prior information that must be satisfied by the decision variables in applications. For instance,
if a true solution to \eqref{eq:P00} is known to be in a certain interval $[l, u]$ for some $l < u$, $l$ and $u\in \R^n$,
then the $S_1$ can be chosen to be $[l,u]$ instead of just $\R^n$.
Constraints of this kind arise naturally in applications such as image restoration, where all gray level images have intensity values ranging from $0$ to $1$.
As shown in \cite{Beck-Teboulle,Chan-Tao,Ng-Weiss-Yuan}, incorporating the bound constraints can lead to substantial
improvements in the quality of the restored image.

While \eqref{eq:P0} is a very flexible model covering a wide range of applications, this optimization problem is a constrained optimization problem, which is typically hard to solve. In the case when $\Phi$ is convex, $S_1 = \R^n$ and $B$ is vacuous, i.e., \eqref{eq:P00}, it is well known that the problem is equivalent to solving
\begin{equation}\label{eq:P-1}
      \min_x \ H_\lambda(x) := \lambda \|Ax - b\|^2 + \Phi(x)
  \end{equation}
for some regularization parameter $\lambda > 0$, under some mild assumptions; see, for example, \cite{FTse07}. Unlike \eqref{eq:P00}, for many choices of $\Phi$, the regularized formulation \eqref{eq:P-1} can be solved readily by various first-order methods such as the NPG method in \cite{WrNoFi09}. This regularized formulation has been extensively studied in {\em both} cases where $\Phi$ is convex or nonconvex in the last few decades; see, for example,
\cite{BDE,XJChen12,CGWY,CXY,JFan,FanJ,GJY,GemanReynolds92,GHLYZ13,HuangHorowitzMa08,Lu,NiNgZhCh08,Rock98,YLHX14,CHZhang,KnightFu00}. Nevertheless, the equivalence between \eqref{eq:P00} and  \eqref{eq:P-1} does not hold in the nonconvex scenario: indeed, for nonconvex $\Phi$ and certain data ($A,b,\sigma$), there does not exist a
$\lambda$ so that problems \eqref{eq:P00} and \eqref{eq:P-1} have a common global or local minimizer; see our Example~\ref{example1}. In particular, one cannot solve \eqref{eq:P00} via solving the unconstrained problem \eqref{eq:P-1} for a suitable $\lambda$ in general.

In a hope of constructing a simpler optimization problem whose local/global minimizers are closely related to \eqref{eq:P0}, we resort to the penalty approach. %Specifically, we consider
%in this paper the following penalty problem
%\begin{equation}\label{eq:P1}
%  \min_{x\in S_1}  \ F_\lambda(x) := \lambda [ (\|Ax - b\|^2 - \sigma^2)_+ + \|(Bx-h)_+\|_1] + \Phi(x)
%\end{equation}
%for some $\lambda > 0$, and aim at obtaining results concerning exact penalization.
While this is a standard approach, there are two important new ingredients in our work. First, although exact penalization for constrained optimization problems with a Lipschitz objective has been well studied (see, for example, \cite{Nocedal}), to the best of our knowledge, there is little theory and development for problems with nonconvex non-Lipschitz objectives such as problem \eqref{eq:P0} with $\phi$ being the bridge penalty. Second, we consider partial penalization that keeps the constraints $S_1$ in \eqref{eq:P0}. Recall that the set $S_1$ in \eqref{eq:P0} can be used to model {\em hard} constraints that must be satisfied or simple constraints that can be easily satisfied\footnote{This means that the projection onto $S_1$ is easy to compute.}, while the set $S_2$ can be used to model {\em soft} constraints that only need to be approximately satisfied. Consequently, it can be advantageous to be able to penalize only the constraints corresponding to $S_2$ and {\em keep} the hard constraints $S_1$. %To our knowledge, this kind of {\em partial} penalization is not commonly studied in the literature.

The penalty problem we consider is
\begin{equation}\label{eq:P1}
  \min_{x\in S_1}  \ F_\lambda(x) := \lambda [ (\|Ax - b\|^2 - \sigma^2)_+ + \|(Bx-h)_+\|_1] + \Phi(x)
\end{equation}
for some $\lambda > 0$,  where $a_+$ denotes the vector whose $i$th entry is $\max\{a_i,0\}$ 
for any $a\in\Re^n$. 
In this paper, we derive various (partial) exact penalization results regarding \eqref{eq:P0} and \eqref{eq:P1}.
 Specifically,
under some suitable assumptions, we establish that:
\bi
\item[(i)] any local minimizer of problem \eqref{eq:P0} is also that of problem \eqref{eq:P1},  provided that $\lambda \ge \lambda^*$ for some $\lambda^*>0$;
\item[(ii)] any global minimizer of problem \eqref{eq:P0} is an $\epsilon$-global minimizer of problem \eqref{eq:P1}, provided that $\lambda \ge \lambda^*$ for
some $\lambda^*>0$;
\item[(iii)] the projection of any global minimizer of problem \eqref{eq:P1} onto the feasible set $S$ of
 problem \eqref{eq:P0} produces an $\epsilon$-global minimizer of problem
\eqref{eq:P0}, provided that $\lambda \ge \lambda^*$ for some $\lambda^*>0$.
\ei
%for local minimizers, KKT points and $\epsilon$-minimizers between the problems \eqref{eq:P0} and \eqref{eq:P1}, under suitable assumptions.
Consequently, problem \eqref{eq:P1} is an exact penalty formulation for \eqref{eq:P0}, and an approximate solution of problem \eqref{eq:P0} can be obtained
by solving \eqref{eq:P1} with $\lambda=\lambda^*$ if an exact penalty parameter $\lambda^*$ is known.

In practice, the value of such $\lambda^*$ is, however, generally unknown. Owing to this, we further propose a penalty method for solving \eqref{eq:P0} whose subproblems are
(partially) smoothed and then solved approximately via a nonmonotone proximal gradient (NPG) method \cite{WrNoFi09} with a suitable update scheme for the penalty and smoothing parameters.
It is noteworthy that the NPG method originally studied in \cite{WrNoFi09} was proposed for minimizing the sum of a possibly nonsmooth function and a smooth function whose
gradient is {\it globally} Lipschitz continuous. Nevertheless, the gradient of the smooth component associated with our subproblems is {\it locally} but not globally Lipschitz continuous. We
are fortunately able to show that this NPG method is indeed capable of solving a more general class of problems which includes our subproblems as a special case. In addition, we show that
any accumulation point of the sequence generated by our penalty method is a KKT point of \eqref{eq:P0} under suitable assumptions. Finally, to benchmark our approach, we consider a sparse recovery problem and compare \eqref{eq:P00} with $\Phi(x) = \sum_{i=1}^n|x_i|^{\frac 12}$ solved by our penalty method against two other approaches: solving \eqref{eq:P00} with $\Phi(x) = \|x\|_1$ by the SPGL1 \cite{BergMPF08}
for finding sparse solutions, and solving \eqref{eq:P-1} with $\Phi(x) = \sum_{i=1}^n|x_i|^{\frac 12}$ for a suitably chosen $\lambda$.
Our numerical results demonstrate that the solutions produced by our approach are sparser and have smaller recovery errors than those found by the other approaches.

The rest of the paper is organized as follows. We present notation and preliminary materials in Section~\ref{sec2}. In Section~\ref{sec4}, we study  the existence of
exact penalty parameters regarding local minimizers and $\epsilon$-minimizers.  In Section~\ref{sec5}, we discuss the
first-order optimality conditions for problems \eqref{eq:P0} and \eqref{eq:P1}. We then propose a penalty method for solving problem  \eqref{eq:P0}
with an update scheme for the penalty parameters and establish its convergence to KKT points of \eqref{eq:P0}.  In Section~\ref{sec6}, we conduct numerical experiments to test the performance
of our method in sparse recovery. Concluding remarks are given in Section~\ref{sec7}.

\section{Notation and preliminaries}\label{sec2}

  We use $\R$ and $\R^n$ to denote the set of real numbers and the $n$-dimensional Euclidean space. For any $x\in \R^n$, let $x_i$ denote the $i$th entry of $x$, and ${\rm Diag}(x)$ denote the diagonal matrix whose $i$th diagonal entry is $x_i$, respectively. We denote the Euclidean norm of $x$ by $\|x\|$, the $\ell_1$ norm by $\|x\|_1$, the infinity norm (sup norm) by $\|x\|_\infty$, and the $p$ quasi-norm by $\|x\|_p := \left(\sum_{i=1}^n|x_i|^p\right)^\frac1p$, for any $0<p<1$. Moreover, we let $|x|^p$ denote the vector whose $i$th entry is $|x_i|^p$ and $(x)_+$ denote the vector whose $i$th entry is $\max\{x_i,0\}$.
  Given an index set $I\subseteq \{1,\ldots,n\}$, let $\bar I$ denote the complement of $I$. For any vector $x$, we write $x_I\in \R^{|I|}$ to denote the restriction of $x$ onto $I$. We also denote by $A_I$ the matrix formed from a matrix $A$ by picking the columns indexed by $I$. In addition, we use $\ker A$ to denote the null space of a matrix $A$.

  For any closed set $D$, we let ${\rm dist}(x,D) = \inf_{y\in D}\|x - y\|$ denote the distance from $x$ to $D$, and ${\rm conv}(D)$ denote the convex hull of $D$. We let $P_D(x)$ denote the set of closest points in $D$ to $x\in \R^n$; this reduces to a singleton if $D$ is additionally convex. For a closed convex set $D$, the normal cone at $x\in D$ is defined as
  \[
  \cN_D(x) := \{y:\; y^T(u - x)\le 0\ \ \forall u\in D\}.
  \]
  The indicator function is denoted by $\delta_D$, which is the function that is zero in $D$ and is infinity elsewhere.
  Finally, we let $\cB(a; r)$ denote the closed ball of radius $r$ centered at $a$, i.e., $\cB(a; r) = \{x \in \R^n:\;\|x-a\| \le r \}$.

  We recall from \cite[Definition~8.3]{Rock98} that for a proper lower semicontinuous function $f$, the (limiting) subdifferential and horizon subdifferential are defined respectively as
  \begin{equation*}%\label{ls}
  \begin{split}
\partial f(x)&:=\left\{v:\;\exists x^k \stackrel{f}{\to} x,\;v^k\to v\;\mbox{ with }\liminf_{z \to x^k}\frac{f(z)-f(x^k)-\langle v^k,z-x^k\rangle}{\|z-x^k\|}\ge 0\ \forall k\right\},\\
\partial^{^\infty}\!\!f(x)&:=\left\{v :\;\exists x^k \stackrel{f}{\to} x,\;\lambda_kv^k\to v, \lambda_k\downarrow 0\;\mbox{ with }\liminf_{z \to x^k}\frac{f(z)-f(x^k)-\langle v^k,z-x^k\rangle}{\|z-x^k\|}\ge 0\ \forall k\right\},
\end{split}
\end{equation*}
where $\lambda_k\downarrow 0$ means $\lambda_k > 0$ and $\lambda_k\to 0$, and $x^k  \stackrel{f}{\to} x$ means both $x^k \to x$ and $f(x^k)\to f(x)$.
%The definitions of $\partial f(x)$ and $\partial^{^\infty}\!\! f(x)$ above were constructed so that we have the following properties:
It is well known that the following properties hold:   
\begin{equation}\label{outersemi}
\begin{split}
  &\left\{v:\; \exists x^k \stackrel{f}{\to} x,\; v^k\to v\;, v^k\in \partial f(x^k)\right\} \subseteq \partial f(x),\\
  &\left\{v:\; \exists x^k \stackrel{f}{\to} x,\; \lambda_k v^k\to v\;,\lambda_k\downarrow 0\;, v^k\in \partial f(x^k)\right\} \subseteq \partial^{^\infty}\!\! f(x).
\end{split}
\end{equation}
Moreover, if $f$ is convex, the above definition of subdifferential coincides with the classical subdifferential in convex analysis \cite[Proposition~8.12]{Rock98}. Furthermore, for a continuously differentiable $f$, we simply have $\partial f(x) = \{\nabla f(x)\}$, where $\nabla f(x)$ is the gradient of $f$ at $x$ \cite[Exercise~8.8(b)]{Rock98}. We also use $\partial_{x_i} f(x)$ to denote the subdifferential with respect to the variable $x_i$. Finally, when $\Phi(x) = \sum_{i=1}^n\phi(x_i)$ for some continuous function $\phi$, we have from \cite[Proposition~10.5]{Rock98} that
\begin{equation}\label{productrule}
  \partial \Phi(x) = \partial \phi(x_1)\times \partial \phi(x_2)\times \cdots \times \partial \phi(x_n).
\end{equation}

For the convenience of readers, we now state our blanket assumptions on \eqref{eq:P0} explicitly here for easy reference.
\begin{assumption}[{{\bf Blanket assumptions on \eqref{eq:P0}}}]
  Throughout this paper, $\Phi$ is a nonnegative continuous function. The feasible set of \eqref{eq:P0} is $S:= S_1\cap S_2$, where $S_1$ is a simple polyhedron given by $\{x:\; Dx \le d\}$, and
  \[
  \quad S_2=\{x :\;  \|Ax-b\| \le \sigma, \,\,  \, \,  Bx\le h \, \}.
  \]
  Moreover, $A$ has full row rank and there exists $x_0\in S$ so that $\|Ax_0 - b\|< \sigma$.
\end{assumption}

We next present some auxiliary lemmas. The first lemma is a well-known result on error bound concerning $S_1$ and $S_2$, obtained as an immediate corollary of \cite[Theorem~3.1]{LuoLuo94}.

\begin{lemma}\label{lem0}
  There exists a $C > 0$ so that for all $x\in \R^n$, we have
  \[
  {\rm dist}(x,S) \le C\left[(\|Ax - b\|^2 - \sigma^2)_+ + \|(Bx - h)_+\|_1 + \|(Dx - d)_+\|_1\right].
  \]
  Consequently, for any $x\in S_1$, we have
  \begin{equation}\label{err-bdd0}
  {\rm dist}(x,S) \le C\left[(\|Ax - b\|^2 - \sigma^2)_+ + \|(Bx - h)_+\|_1\right].
  \end{equation}
\end{lemma}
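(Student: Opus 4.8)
The plan is to deduce Lemma~\ref{lem0} directly from the local error bound result of Luo and Luo~\cite[Theorem~3.1]{LuoLuo94}, which gives a Lipschitzian error bound for the solution set of a system consisting of one convex quadratic inequality together with finitely many linear inequalities. First I would observe that the feasible set $S = S_1\cap S_2$ is exactly the solution set of the system
\[
\|Ax-b\|^2 \le \sigma^2,\qquad Bx \le h,\qquad Dx \le d,
\]
where the first inequality is a convex quadratic one (its left-hand side is convex since $x\mapsto\|Ax-b\|^2$ is convex) and the remaining ones are affine. The Slater-type condition in our Blanket Assumption, namely the existence of $x_0\in S$ with $\|Ax_0-b\|<\sigma$, together with the nonemptiness of $S$, is precisely the qualification needed to invoke the global (not merely local) form of the Luo--Luo error bound. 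Applying that theorem yields a constant $C>0$ such that for all $x\in\R^n$,
\[
{\rm dist}(x,S)\le C\left[(\|Ax-b\|^2-\sigma^2)_+ + \|(Bx-h)_+\|_1 + \|(Dx-d)_+\|_1\right],
\]
since the residual of the quadratic inequality $\|Ax-b\|^2\le\sigma^2$ is $(\|Ax-b\|^2-\sigma^2)_+$ and the residuals of the linear systems $Bx\le h$, $Dx\le d$ are $\|(Bx-h)_+\|_1$ and $\|(Dx-d)_+\|_1$ (up to equivalence of norms on the residual vector, which only affects $C$).

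For the second assertion, I would simply restrict to $x\in S_1$. By definition $S_1=\{x:\;Dx\le d\}$, so for such $x$ the term $(Dx-d)_+$ vanishes, i.e. $\|(Dx-d)_+\|_1 = 0$, and the general bound collapses to
\[
{\rm dist}(x,S)\le C\left[(\|Ax-b\|^2-\sigma^2)_+ + \|(Bx-h)_+\|_1\right],
\]
which is \eqref{err-bdd0}. This part is immediate once the first bound is in hand.

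The only genuine point requiring care — and the step I expect to be the main obstacle in writing a rigorous proof — is checking that the hypotheses of \cite[Theorem~3.1]{LuoLuo94} are met so that the error bound holds \emph{globally} on all of $\R^n$ with a single uniform constant $C$, rather than only locally near $S$ or on bounded sets. This is where the full row rank of $A$ and the strict feasibility condition $\|Ax_0-b\|<\sigma$ enter: they guarantee that the convex quadratic inequality is not degenerate in a way that would destroy the Lipschitzian error bound (for instance, they rule out the quadratic constraint reducing to a single point or an affine set touched only tangentially by the feasible region, situations in which a Hölderian rather than Lipschitzian bound is the best possible). Once one confirms that the system fits the template of \cite{LuoLuo94} and that the qualification there is implied by our Blanket Assumption, the lemma follows, so I would keep the write-up short and attribute the substance to that reference.
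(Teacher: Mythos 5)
Your proposal is correct and matches the paper exactly: the paper gives no proof of this lemma, stating it as an immediate corollary of \cite[Theorem~3.1]{LuoLuo94} for a system of one convex quadratic inequality plus linear inequalities under the Slater condition from the blanket assumption, with \eqref{err-bdd0} following because $\|(Dx-d)_+\|_1=0$ on $S_1$. The only quibble is that the full row rank of $A$ plays no role here — the Luo--Luo global Lipschitzian bound needs only the strict feasibility of the quadratic constraint — but this does not affect the validity of your argument.
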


The constant $C$ in the above lemma cannot be explicitly computed in general. We next present a more explicit representation of this constant in some special cases. We start with the case where $S_1 = \R^n$ and $B$ is vacuous, i.e., $S = S_2 = \{x:\; \|Ax - b\|\le \sigma\}$.

  \begin{lemma}\label{lem-1}
    Suppose that $S = S_2 = \{x:\; \|Ax - b\|\le \sigma\}$. Then there exists a $C > 0$ so that for all $x$,
    \begin{equation*}
      {\rm dist}(x,S)\le \|A^\dagger\| (\|Ax - b\| - \sigma)_+\le C (\|Ax - b\|^2 - \sigma^2)_+.
    \end{equation*}
    Indeed, $C$ can be chosen to be $\frac{\|A^\dagger\|}{\sigma}$, where $A^\dagger = A^T(AA^T)^{-1}$ is the pseudo-inverse of $A$.
  \end{lemma}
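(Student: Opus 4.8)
The plan is to establish the two inequalities separately. For the first inequality, $\mathrm{dist}(x,S) \le \|A^\dagger\|(\|Ax-b\| - \sigma)_+$, I would argue as follows. If $x \in S$, i.e., $\|Ax - b\| \le \sigma$, both sides are zero and there is nothing to prove. So suppose $\|Ax - b\| > \sigma$. The idea is to construct an explicit feasible point $y \in S$ close to $x$ by moving $Ax$ radially toward $b$ and pulling this back through $A^\dagger$. Concretely, set $r := Ax - b$, so $\|r\| > \sigma$, and let $y := x - A^\dagger\left(1 - \frac{\sigma}{\|r\|}\right) r$. Then $Ay - b = r - \left(1 - \frac{\sigma}{\|r\|}\right) AA^\dagger r = r - \left(1 - \frac{\sigma}{\|r\|}\right) r = \frac{\sigma}{\|r\|} r$, using $AA^\dagger = I$ since $A$ has full row rank; hence $\|Ay - b\| = \sigma$, so $y \in S$. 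Therefore $\mathrm{dist}(x,S) \le \|x - y\| = \left\|A^\dagger\left(1 - \frac{\sigma}{\|r\|}\right) r\right\| \le \|A^\dagger\|\left(1 - \frac{\sigma}{\|r\|}\right)\|r\| = \|A^\dagger\|(\|r\| - \sigma) = \|A^\dagger\|(\|Ax - b\| - \sigma)_+$.

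For the second inequality, $\|A^\dagger\|(\|Ax-b\|-\sigma)_+ \le C(\|Ax-b\|^2 - \sigma^2)_+$ with $C = \|A^\dagger\|/\sigma$, I would again dispose of the trivial case $\|Ax - b\| \le \sigma$ (both sides zero) and otherwise write $\|Ax-b\|^2 - \sigma^2 = (\|Ax-b\| - \sigma)(\|Ax-b\| + \sigma) \ge (\|Ax-b\| - \sigma)\cdot\sigma$, since $\|Ax - b\| + \sigma \ge \sigma$ (here one uses $\sigma > 0$, which holds because the Blanket Assumption guarantees a strictly feasible point $x_0$ with $\|Ax_0 - b\| < \sigma$). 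Multiplying through by $\|A^\dagger\|/\sigma$ gives the claim. Combining the two inequalities yields the stated chain with $C = \|A^\dagger\|/\sigma$.

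The only mild subtlety — and the step I would be most careful about — is the identity $AA^\dagger = I$: this requires $A$ to have full row rank so that $AA^T$ is invertible and $A^\dagger = A^T(AA^T)^{-1}$ is a genuine right inverse. This is exactly part of the Blanket Assumption, so there is no real obstacle; I would just note explicitly where it is used. Also worth a remark: the bound $\|A^\dagger(1-\sigma/\|r\|)r\| \le \|A^\dagger\|(1-\sigma/\|r\|)\|r\|$ uses the operator norm of $A^\dagger$ induced by the Euclidean norm, consistent with the paper's convention that $\|\cdot\|$ denotes the Euclidean norm.
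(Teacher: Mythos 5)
Your proof is correct and is essentially the same argument as the paper's: the explicit feasible point $y = x - A^\dagger\bigl(1-\tfrac{\sigma}{\|r\|}\bigr)r$ you construct is precisely the point that the paper obtains implicitly by writing $S = A^\dagger\cB(b;\sigma)+\ker A$, decomposing $x = A^\dagger Ax + (x-A^\dagger Ax)$, and projecting $Ax$ onto the ball $\cB(b;\sigma)$. The second inequality (factoring $\|Ax-b\|^2-\sigma^2$ and bounding the denominator below by $\sigma$) is identical to the paper's, and your remarks about $AA^\dagger = I$ and $\sigma>0$ are exactly the hypotheses the paper relies on.
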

  \begin{proof}
    Notice that $S = A^\dagger\cB(b;\sigma) + \ker A$. Moreover, for any $x$, $x - A^\dagger Ax \in \ker A$. Thus, we have
    \begin{equation*}
      \begin{split}
        &{\rm dist}(x,S) = {\rm dist}(A^\dagger Ax + [x - A^\dagger Ax], A^\dagger\cB(b;\sigma) + \ker A)\\
        &\le {\rm dist}(A^\dagger Ax, A^\dagger\cB(b;\sigma)) \le \|A^\dagger\| {\rm dist}(Ax, \cB(b;\sigma)) = \|A^\dagger\| (\|Ax - b\| - \sigma)_+,
      \end{split}
    \end{equation*}
    where the last equality follows from a direct computation based on the fact that the projection from any point $u\notin \cB(b;\sigma)$ onto $\cB(b;\sigma)$ is $b + \sigma\frac{u - b}{\|u - b\|}$. The conclusion of the lemma now follows from the above estimate and the following simple relation:
    \begin{equation*}
      (\|Ax - b\| - \sigma)_+ = \left(\frac{\|Ax - b\|^2 - \sigma^2}{\|Ax - b\| + \sigma}\right)_+ \le \frac{1}{\sigma}(\|Ax - b\|^2 - \sigma^2)_+.
    \end{equation*}
  \end{proof}

We next consider the case where $S$ is compact. We refer the readers to \cite[Lemma~3.2.3]{FchP03:vi} and \cite[Remark~3.2.4]{FchP03:vi} for an explicit finite upper bound for the constant $\beta$ in \eqref{bd4} below.
%
%Based on Lemma~\ref{err-bdd1}, we now establish an error bound in the form of \eqref{err-bdd0} with the modulus $C$ explicitly given, when $S_1$ is in addition compact.
  \begin{lemma}\label{lem1}
  Suppose there exist $x_s\in S$, $R > \delta > 0$ so that $\sup_{u\in \cB(x_s;\delta)}\|Au - b\|\le \sigma$ and $S\subseteq \cB(x_s;R)$.
  Then there exists $\beta > 0$ so that for all $x\in \R^n$, we have
  \begin{equation}\label{bd4}
  {\rm dist}(x,S) \le 2\left(1 + \frac{R}{\delta}\right)\left(\frac{\|A^\dagger\|}{\sigma}(\|Ax - b\|^2 - \sigma^2)_+ + \beta\left\|{\begin{pmatrix}
    Bx - h\\ Dx - d
  \end{pmatrix}}_+\right\|_1\right).
  \end{equation}
%  where $\beta$ is defined as
%  \[
%  \beta = \sup_{x\notin \cS_1}\left\{\left\|{\begin{pmatrix}
%    Bx - h\\ Dx - d
%  \end{pmatrix}}_+\right\|_1^{-1}(\dist(x,\cS_1))\right\} < \infty,
%  \]
%  with $\cS_1= \{x:\;  Bx \le h, Dx \le d\}$.
\end{lemma}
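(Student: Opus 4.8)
The plan is to treat the two sources of infeasibility separately: first absorb the violation of the polyhedral constraints $Bx\le h$ and $Dx\le d$ via a Hoffman‑type error bound, and then repair the residual ellipsoidal violation by pulling the iterate back towards $x_s$, which is ``strictly feasible'' for the ball constraint. Concretely, set $P:=\{z:\ Bz\le h,\ Dz\le d\}$, a polyhedron containing $S$ and nonempty since $x_s\in P$. Hoffman's lemma for $P$ (this is where the explicit constant comes from; cf. \cite[Lemma~3.2.3 and Remark~3.2.4]{FchP03:vi}) supplies $\beta_0>0$ with $\dist(x,P)\le\beta_0\,\nu(x)$ for all $x$, where $\nu(x):=\bigl\|{(Bx-h,\,Dx-d)}_+\bigr\|_1$ abbreviates the $\ell_1$ term in \eqref{bd4}. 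Fix $\bar x\in P$ realizing $\dist(x,P)$.

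Next I would extract from $\sup_{u\in\cB(x_s;\delta)}\|Au-b\|\le\sigma$ the quantitative Slater bound
\[
\|Ax_s-b\|\ \le\ \sigma-\frac{\delta}{\|A^\dagger\|}.
\]
Indeed, taking $w:=(Ax_s-b)/\|Ax_s-b\|$ (any unit vector if $Ax_s=b$), the point $x_s+\tfrac{\delta}{\|A^\dagger\|}A^\dagger w$ lies in $\cB(x_s;\delta)$ because $\|A^\dagger w\|\le\|A^\dagger\|$, and applying $A$ to it gives $(Ax_s-b)+\tfrac{\delta}{\|A^\dagger\|}w$, whose norm equals $\|Ax_s-b\|+\tfrac{\delta}{\|A^\dagger\|}$.

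Now the main step. If $\|A\bar x-b\|\le\sigma$ then $\bar x\in S$ (recall $\bar x\in P$ already gives $B\bar x\le h$, $D\bar x\le d$) and $\dist(\bar x,S)=0$. Otherwise put $q:=\|A\bar x-b\|>\sigma$, $\sigma':=\|Ax_s-b\|$, and consider $z_t:=(1-t)\bar x+tx_s\in P$ for $t\in[0,1]$; since $\|Az_t-b\|\le(1-t)q+t\sigma'$, the choice $t^*:=\frac{q-\sigma}{q-\sigma'}\in(0,1)$ forces $z_{t^*}\in S$. The key observation is that $z_{t^*}\in S\subseteq\cB(x_s;R)$ together with $z_{t^*}-x_s=(1-t^*)(\bar x-x_s)$ gives $\|\bar x-x_s\|\le R/(1-t^*)$, whence
\[
\dist(\bar x,S)\le\|\bar x-z_{t^*}\|=t^*\|\bar x-x_s\|\le\frac{t^*}{1-t^*}R=\frac{q-\sigma}{\sigma-\sigma'}R\le\frac{R\|A^\dagger\|}{\delta}\,(\|A\bar x-b\|-\sigma)_+ ,
\]
the last step using the Slater bound; this estimate holds trivially in the first case too. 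Finally I would combine $\dist(x,S)\le\|x-\bar x\|+\dist(\bar x,S)$ with $\|A\bar x-b\|\le\|Ax-b\|+\|A\|\,\dist(x,P)$, the elementary inequality $(\|Ax-b\|-\sigma)_+\le\frac1\sigma(\|Ax-b\|^2-\sigma^2)_+$ used in the proof of Lemma~\ref{lem-1}, and $\dist(x,P)\le\beta_0\nu(x)$, to reach a bound of the form $\frac{R\|A^\dagger\|}{\delta\sigma}(\|Ax-b\|^2-\sigma^2)_+ + \bigl(1+\tfrac{R\|A\|\,\|A^\dagger\|}{\delta}\bigr)\beta_0\,\nu(x)$; since $R\le2(\delta+R)$ the first coefficient is at most $\frac{2(1+R/\delta)\|A^\dagger\|}{\sigma}$, and one simply takes $\beta$ in \eqref{bd4} large enough (e.g. $\beta:=\bigl(1+\tfrac{R\|A\|\|A^\dagger\|}{\delta}\bigr)\beta_0/(2(1+R/\delta))$) to absorb the coefficient of $\nu(x)$.

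The part I expect to need the most care is the dilation step: after projecting onto $P$, the point $\bar x$ may be far from $x_s$, so $\dist(\bar x,S)$ cannot be bounded by a multiple of $\|\bar x-x_s\|$ directly. The resolution is to choose $t^*$ so that the dilated point lands \emph{exactly} in $S$ and then control $\|\bar x-x_s\|$ a posteriori through $1-t^*$, with the quantitative Slater bound keeping $t^*$ away from $1$ in a way measured by $\delta/\|A^\dagger\|$. Everything after that is bookkeeping; in particular the condition‑number factor $\|A\|\,\|A^\dagger\|$ produced when transferring $(\|A\bar x-b\|-\sigma)_+$ back to $x$ is harmless, since it is a finite explicit quantity that is simply hidden inside $\beta$.
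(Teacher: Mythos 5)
Your argument is correct, but it follows a genuinely different route from the paper's. The paper treats the two constraint blocks symmetrically: it invokes the linear regularity result \cite[Lemma~2.1]{LiMaPong14} (for the intersection of the polyhedral part with the ball constraint, using the Slater point $x_s$ and the bound $S\subseteq\cB(x_s;R)$) to obtain
\[
\dist(x,S)\ \le\ 2\Bigl(1+\tfrac{R}{\delta}\Bigr)\max\bigl\{\dist(x,\{Bx\le h,\ Dx\le d\}),\ \dist(x,\{\|Ax-b\|\le\sigma\})\bigr\},
\]
and then bounds the two distances separately via the Hoffman error bound and Lemma~\ref{lem-1}, so the factor $2(1+R/\delta)$ is inherited verbatim from the cited lemma. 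You instead give a self-contained sequential argument: project onto the polyhedron first (Hoffman), then repair the residual ellipsoidal violation by sliding along the segment toward $x_s$, with the quantitative Slater margin $\sigma-\|Ax_s-b\|\ge\delta/\|A^\dagger\|$ (your computation is valid since $AA^\dagger=I$ by the full-row-rank blanket assumption) keeping $t^*$ bounded away from $1$ and the containment $S\subseteq\cB(x_s;R)$ controlling $\|\bar x-x_s\|$ a posteriori. This is essentially the mechanism hidden inside the cited regularity lemma, re-derived from scratch for this particular pair of sets; what it buys is a proof with no black box and a slightly sharper coefficient $R\|A^\dagger\|/(\delta\sigma)\le 2(1+R/\delta)\|A^\dagger\|/\sigma$ on the quadratic residual, at the cost of an extra condition-number factor $\|A\|\,\|A^\dagger\|$ on the polyhedral residual, which you correctly absorb into $\beta$. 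All the individual steps check out, including $t^*/(1-t^*)=(q-\sigma)/(\sigma-\sigma')$ and the final bookkeeping with $R\le 2(\delta+R)$.
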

\begin{proof}
  Let $\cS_1 = \{x:\; Bx \le h,\ Dx \le d\}$ and $\cS_2=\{x: \|Ax - b\|\le \sigma\}$. Then $S = \cS_1\cap \cS_2$. From the assumptions and \cite[Lemma~2.1]{LiMaPong14} (see also \cite[Lemma~4.10]{LiNgPong07}), we see that for all $x\in \R^n$, we have
  \begin{equation}\label{bd2}
    \dist(x,S) \le 2\left(1 + \frac{R}{\delta}\right)\max\{\dist(x,\cS_1),\dist(x,\cS_2)\}.
  \end{equation}
The desired conclusion now follows from \eqref{bd2}, Lemma~\ref{lem-1} and \cite[Lemma~3.2.3]{FchP03:vi} (Hoffman error bound).
\end{proof}

  We end this section with the following auxiliary lemmas concerning the function $t\mapsto t^p$, $0<p<1$.

  \begin{lemma}\label{lem2}
    Let $0<p < 1$. For any nonnegative numbers $s$ and $t$, it holds that
    \[
    |s^p - t^p| \le |s - t|^p.
    \]
  \end{lemma}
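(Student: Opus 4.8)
The plan is to reduce to the case $s \ge t \ge 0$ by symmetry: since $u \mapsto u^p$ is nondecreasing on $[0,\infty)$, the left-hand side equals $s^p - t^p$ and the right-hand side equals $(s-t)^p$, so it suffices to prove $s^p - t^p \le (s-t)^p$ whenever $s \ge t \ge 0$. The case $t = 0$ is an equality, so one may assume $t > 0$.

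The quickest route I would take is via subadditivity of concave functions vanishing at the origin. For $0 < p < 1$ the function $f(u) := u^p$ is concave on $[0,\infty)$ (its second derivative $p(p-1)u^{p-2}$ is negative on $(0,\infty)$) and satisfies $f(0) = 0$. I would first record the elementary fact that any concave $f$ on $[0,\infty)$ with $f(0)\ge 0$ is subadditive: for $a,b>0$, writing $a = \tfrac{a}{a+b}(a+b) + \tfrac{b}{a+b}\cdot 0$ and using concavity gives $f(a) \ge \tfrac{a}{a+b}f(a+b)$, similarly $f(b) \ge \tfrac{b}{a+b}f(a+b)$, and adding yields $f(a+b) \le f(a) + f(b)$. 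Applying this with $a = s - t$ and $b = t$ gives $s^p \le (s-t)^p + t^p$, which is exactly the desired inequality.

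An equally clean alternative, should a self-contained one-variable argument be preferred, is to divide by $t^p$ and set $r := s/t \ge 1$, reducing the claim to $r^p - 1 \le (r-1)^p$ for $r \ge 1$. One then studies $g(r) := (r-1)^p + 1 - r^p$, observes $g(1) = 0$, and computes $g'(r) = p\big[(r-1)^{p-1} - r^{p-1}\big] > 0$ for $r > 1$, since $x \mapsto x^{p-1}$ is strictly decreasing on $(0,\infty)$ and $0 < r-1 < r$; hence $g$ is nondecreasing on $[1,\infty)$ and $g(r) \ge g(1) = 0$.

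There is essentially no obstacle here: the argument is elementary. The only point deserving a word of care in the second approach is the behaviour of $g'$ as $r \downarrow 1$, where $(r-1)^{p-1} \to +\infty$; but this only makes $g'$ more positive and, together with continuity of $g$ at $r = 1$, leaves the monotonicity conclusion intact. I would present the subadditivity version as the main proof for brevity and note the reduction to $s \ge t$ explicitly.
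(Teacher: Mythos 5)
Your proposal is correct. The target inequality is the same in both cases, namely $s^p \le (s-t)^p + t^p$ for $s \ge t \ge 0$, but your primary derivation of it is genuinely different from the paper's. The paper normalizes by $s$, setting $r = t/s \in [0,1]$, and checks by elementary calculus that $h(r) = 1 - r^p - (1-r)^p$ satisfies $h(r) \le h(0) = 0 = h(1)$ on $[0,1]$; this is a self-contained two-line verification. You instead invoke the general fact that a concave function on $[0,\infty)$ with $f(0) \ge 0$ is subadditive, proved by the standard convex-combination trick, and apply it to $f(u) = u^p$ with $a = s - t$, $b = t$. This avoids any derivative computation of the target function itself and isolates the structural reason the inequality holds (concavity plus vanishing at the origin), so it generalizes immediately to any concave modulus; the price is the small auxiliary lemma on subadditivity. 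Your second, one-variable alternative (normalizing by $t$ and showing $g(r) = (r-1)^p + 1 - r^p$ is nondecreasing on $[1,\infty)$) is essentially the paper's argument under a different normalization, and your remark about the behaviour of $g'$ as $r \downarrow 1$ is a correct and harmless observation. All reductions (symmetry to $s \ge t$, the degenerate cases $t = 0$ and $s = t$) are handled properly.
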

  \begin{proof}
    Without loss of generality, we may assume that $s \ge t$.
    Consider $h(r) := 1 - r^p - (1-r)^p$ for $r\in [0,1]$.
    Simple differential calculus shows that $h(r) \le h(0) = 0 = h(1)$ whenever $r \in [0, 1]$. The desired conclusion then
follows by setting $r = \frac{t}{s}$.%Notice that this function is differentiable in $(0,1)$. Moreover,
%    \[
%    h'(r) = -p r^{p - 1} + p (1 - r)^{p - 1},
%    \]
%    which is negative in $(0,\frac12)$ and positive in $(\frac12,1)$, showing that the local maxima of $h$ are attained at $r = 0$ or $1$. Since $h(0) = h(1) = 0$, it follows that $h(r)\le 0$ for $r\in [0,1]$. The conclusion now follows by setting $r = \frac ts$.
  \end{proof}

  \begin{lemma}\label{lem:lp}
    Let $0<p<1$. Then the following statements hold.
    \begin{enumerate}[{(i)}]
      \item Let $h(t) = |t|^p$. Then $\partial h(0) = \partial^{^\infty}\!\!h(0) = \R$.
      \item Let $H(x) = \sum_{i=1}^n|x_i|^p$ and fix any $x^*\in \R^n$. Let $I:= \{i:\; x^* \neq 0\}$. Then
      \[
      \partial^{^\infty}\!\!H(x^*) = \{v:\; v_i = 0\  {\rm for}\  i\in I\}.
      \]
    \end{enumerate}
  \end{lemma}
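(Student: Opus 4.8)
The plan is to compute these subdifferentials directly from the limiting definitions given in Section~\ref{sec2}, exploiting that $h(t)=|t|^p$ is continuous with an infinite slope at the origin. For part~(i), I would first establish $\partial h(0)=\R$. Given any $v\in\R$, the idea is to pick a sequence $t_k\downarrow 0$ and choose $v^k$ so that the difference quotient condition $\liminf_{z\to t_k}\frac{h(z)-h(t_k)-v^k(z-t_k)}{|z-t_k|}\ge 0$ holds. Since $h$ is differentiable away from $0$ with $h'(t_k)=p\,t_k^{p-1}\to+\infty$, for $t_k>0$ we have $\partial h(t_k)=\{p\,t_k^{p-1}\}$; but these blow up, so instead I would use a regular-subgradient characterization at $0$ itself. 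A cleaner route: show directly that every $v\in\R$ is a regular subgradient of $h$ at $0$, i.e. $\liminf_{z\to 0}\frac{|z|^p - v z}{|z|}\ge 0$. Indeed $\frac{|z|^p-vz}{|z|}=|z|^{p-1}-v\,\mathrm{sign}(z)\to+\infty$ as $z\to0$ because $|z|^{p-1}\to\infty$ and $v\,\mathrm{sign}(z)$ stays bounded. Hence $\widehat\partial h(0)=\R$, and since $\partial h(0)\supseteq\widehat\partial h(0)$ while $\partial h(0)\subseteq\R$ trivially, we get $\partial h(0)=\R$. For the horizon subdifferential, take any $v\in\R$; choosing $\lambda_k\downarrow 0$, $v^k=v/\lambda_k$, $x^k=0$, and using that each such $v^k\in\widehat\partial h(0)=\R$ by the computation just done, the second inclusion in \eqref{outersemi} (applied with $x^k\equiv 0$) gives $v=\lambda_k v^k\cdot\lambda_k^{-1}\cdots$—more precisely $\lambda_k v^k = v \to v$, so $v\in\partial^\infty h(0)$; thus $\partial^\infty h(0)=\R$ as well.

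For part~(ii), I would combine part~(i) with the product rule for separable functions. By \eqref{productrule} applied to the subdifferential, and an analogous product structure for the horizon subdifferential of a separable sum of continuous functions (which follows from \cite[Proposition~10.5]{Rock98}), we have $\partial^\infty H(x^*)=\partial^\infty h(x^*_1)\times\cdots\times\partial^\infty h(x^*_n)$. For each coordinate $i\in I$, $x^*_i\neq 0$, so $h$ is continuously differentiable near $x^*_i$ and hence $\partial^\infty h(x^*_i)=\{0\}$ (the horizon subdifferential of a function that is finite and locally Lipschitz—indeed smooth—at a point is $\{0\}$, by \cite[Exercise~8.8(b)]{Rock98} or \cite[Theorem~9.13]{Rock98}). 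For each coordinate $i\notin I$, $x^*_i=0$ and part~(i) gives $\partial^\infty h(0)=\R$. Taking the product yields exactly $\{v:\; v_i=0\ \text{for}\ i\in I\}$, as claimed.

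The main obstacle, and the only place requiring genuine care, is the separability of the horizon subdifferential: \eqref{productrule} as stated in the excerpt is only for $\partial\Phi$, not $\partial^\infty\Phi$. I would address this by invoking the general product rule \cite[Proposition~10.5]{Rock98}, which does provide the corresponding identity for the horizon subdifferential of a sum $\sum_i \phi(x_i)$ of continuous (in fact l.s.c.) functions of independent variables; alternatively, one can argue directly from the definition that a horizon subgradient of a separable sum must be a concatenation of horizon-or-regular subgradients in each block with at least one genuine horizon subgradient, but since all coordinates other than those in $I$ contribute $\R$ and those in $I$ contribute only $0$, the set is easily identified either way. Everything else is a routine limit computation exploiting $|z|^{p-1}\to\infty$.
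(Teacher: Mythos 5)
Your proposal is correct and follows essentially the same route as the paper: compute $\hat\partial h(0)=\R$ directly from $|z|^{p-1}\to\infty$, deduce $\partial h(0)=\partial^{^\infty}\!h(0)=\R$, and then apply the separable-sum rule of \cite[Proposition~10.5]{Rock98} to get the product formula for $\partial^{^\infty}\!H$. One small imprecision: the equality version of Proposition~10.5 (as opposed to the inclusion $\partial^{^\infty}\!H(x^*)\subseteq\prod_i\partial^{^\infty}\!h(x_i^*)$) requires each summand to be \emph{regular} at the point in question, not merely continuous or l.s.c.; the paper supplies exactly this, using $\hat\partial h(0)=\partial h(0)$ together with \cite[Corollary~8.11]{Rock98} to certify regularity of $|t|^p$ at $0$ and smoothness to certify it elsewhere. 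Your fallback of arguing the reverse inclusion directly from the definition would also work, but the regularity check is the cleanest way to close that step.
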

  \begin{proof}
    We first prove (i). Consider the set
    \[
    \hat\partial h(0) := \left\{s\in \R:\; \liminf_{t\to 0}\frac{|t|^p - st}{|t|} \ge 0\right\}.
    \]
    Since $\liminf_{t\to 0}|t|^{p-1} = \infty$ due to $0<p<1$, we see immediately that $\hat\partial h(0) = \R$. Since we have from \cite[Theorem~8.6]{Rock98} that $\hat \partial h(0)\subseteq \partial h(0)$ and that $\partial^{^\infty}\!\!h(0)$ contains the recession cone of $\hat\partial h(0)$, we conclude further that $\partial h(0) = \partial^{^\infty}\!\!h(0) = \R$.

    We next prove (ii). Part (i) together with the fact that $\hat\partial h(0) = \R$ and \cite[Corollary~8.11]{Rock98} shows that $h(t)= |t|^p$ is regular at $0$. In addition, $h$ is clearly regular at any $t\neq 0$. Then, according to \cite[Proposition~10.5]{Rock98}, we have
    \[
    \partial^{^\infty}\!\!H(x^*) = \partial^{^\infty}\!\!h(x_1^*)\times \cdots \partial^{^\infty}\!\!h(x_n^*),
    \]
    from which the conclusion follows immediately.
  \end{proof}

  \section{Exact Penalization}\label{sec4}

  Problem \eqref{eq:P0} is a constrained optimization problem, which can be difficult to solve when the constraint set $S$ is complicated. In the case when $\Phi$ is convex, $\sigma > 0$, $S_1 = \R^n$ and $B$ is vacuous, i.e., \eqref{eq:P00}, it is well known that the problem is equivalent to solving the unconstrained optimization problem \eqref{eq:P-1} for some suitable $\lambda > 0$; see, for example, \cite{FTse07}. However, as we will illustrate in the next example, this is no longer true for a general nonconvex $\Phi$.

  \begin{example}\label{example1}
    Consider the following one-dimensional optimization problem:
    \begin{equation}\label{ex:P0}
      \begin{array}{rl}
        \min\limits_t & \phi(t)\\
        {\rm s.t.} & |t - a|\le \gamma a
      \end{array}
    \end{equation}
    for some $a > 0$ and $\gamma\in (0,1)$. Assume that $\phi$ is strictly increasing on $[0,\infty)$.

    It is clear that $t^* = (1 -\gamma)a$ is the global minimizer of \eqref{ex:P0}. Suppose that $\phi$ is twice continuously differentiable at $t^*$. Then it is easy to check from the first-order optimality condition that $t^*$ is a stationary point of
    \begin{equation}\label{ex:P1}
      \min_t\ \lambda (t - a)^2 + \phi(t)
    \end{equation}
    only when $\lambda = \phi'(t^*)/(2\gamma a)$, which is nonnegative since $\phi$ is monotone. Next, the second derivative of the objective of \eqref{ex:P1} with $\lambda = \phi'(t^*)/(2\gamma a)$ at $t^*$ is given by
    \begin{equation}\label{2ndderiv}
    2\lambda + \phi''\left(t^*\right) = \frac{\phi'(t^*)}{\gamma a} + \phi''\left(t^*\right).
    \end{equation}
    If this quantity is negative, then $t^*$ cannot be a local minimizer of \eqref{ex:P1} even for $\lambda = \phi'(t^*)/(2\gamma a)$, and consequently, $t^*$ cannot be a local minimizer of \eqref{ex:P1} for any $\lambda > 0$.

    Some concrete examples of $\phi$ and $a$ such that \eqref{2ndderiv} is negative are given below, where the $\phi$'s are building blocks for widely used nonconvex regularization functions.
    \begin{enumerate}
      \item bridge penalty $\phi(t) = |t|^p$ for $0<p<1$ \cite{HuangHorowitzMa08,KnightFu00}.\\
       For any $a >0$, \eqref{2ndderiv} equals $p(t^*)^{p-2}\left(p-2+1/{\gamma}\right)$. Hence, \eqref{2ndderiv} is negative if $p < 2 - 1/{\gamma}$. Since $p$ is positive, this can happen when $\gamma > 1/(2-p)$;
      \item fraction penalty $\phi(t) = \alpha|t|/(1 + \alpha|t|)$ for $\alpha > 0$ \cite{GemanReynolds92}.\\
       For any $a > 0$, a direct computation shows that \eqref{2ndderiv} equals $({\alpha}/{\gamma a})\left(1+\alpha t^*\right)^{-3}[1 + (1-3\gamma)\alpha a]$, which is negative when $1 + (1-3\gamma)\alpha a < 0$. Since $a$ and $\alpha$ are both positive, this can happen when $\gamma > (1+\alpha a)/(3\alpha a)$;
      \item logistic penalty $\phi(t) = \log(1 + \alpha|t|)$ for $\alpha > 0$ \cite{NiNgZhCh08}.\\
        For any $a > 0$, \eqref{2ndderiv} equals $({\alpha}/{\gamma a})\left(1+\alpha t^*\right)^{-2}\left[ 1 + (1-2\gamma)\alpha a\right]$, which is negative if $1 + (1-2\gamma)\alpha a < 0$. Since $a$ and $\alpha$ are both positive, this can happen when $\gamma > (1+\alpha a)/(2\alpha a)$.
    \end{enumerate}

  \end{example}

  Example~\ref{example1} shows that the negativity of  $\phi''$ prevents us from building a relationship between  \eqref{eq:P00} and \eqref{eq:P-1} regarding global or local minimizers. %and stationary points. % TK: stationary point is okay in our example.
  In general, we cannot always find a $\lambda$ such that the intersection of the sets of global (local) minimizers of \eqref{eq:P00} and \eqref{eq:P-1}
  is nonempty, when $\phi$ is monotone and concave in $[0,\infty)$.

  In order to build a simpler optimization problem whose local/global minimizers are related to the constrained problem \eqref{eq:P0} (which contains \eqref{eq:P00} as a special case) when $\Phi$ is possibly nonconvex, we adopt the penalty approach. We hereby emphasize again that there is little theory concerning the penalty approach when $\Phi$ is non-Lipschitz. Moreover, it is not common in the literature to consider {\em partial} penalization that keeps part of the constraints, $S_1$, in the penalized problem \eqref{eq:P1}.
  In this section, we shall study various (partial) exact penalization results concerning the problems \eqref{eq:P0} and \eqref{eq:P1}, for both locally Lipschitz and non-Lipschitz objectives $\Phi$.

  \subsection{A general penalization result}\label{sec40}

  We first present some results regarding exact penalty reformulation for a general optimization problem. These results will be applied in subsequent subsections
  to derive various exact penalization results. The following lemma is similar to \cite[Proposition~4]{LeDN12}. For self-contained purpose, we provide a simple proof.

\begin{lemma} \label{general-penalty}
Consider the problem
\begin{equation} \label{general-opt}
\min\limits_{x \in \cS_1 \cap \cS_2} f(x),
\end{equation}
where $\cS_1$ and $\cS_2$ are two nonempty closed sets in $\R^n$. Assume that $f$ is Lipschitz continuous in
$\cS_1$ with a Lipschitz constant $L_f>0$, and moreover, problem \eqref{general-opt} has at least one optimal
solution. Suppose in addition that there is a function $Q: \cS_1 \to \R_+$ satisifying
\begin{equation} \label{Q}
Q(x)  \ge \dist(x,\cS_1 \cap \cS_2) \quad \forall x \in \cS_1; \quad  Q(x) =  0 \quad \forall x \in \cS_1 \cap \cS_2.
\end{equation}

Then it holds that:
\begin{enumerate}
\item[(i)]
 if $x^*$ is a global minimizer of \eqref{general-opt}, then
$x^*$ is a global minimizer of
\begin{equation} \label{P1}
\min\limits_{x\in\cS_1} \ f(x) + \lambda Q(x)
\end{equation}
whenever $\lambda \ge L_f$;
\item[(ii)] if $x^*$ is a global minimizer of \eqref{P1} for some $\lambda > L_f$, then $x^*$ is a global
minimizer of \eqref{general-opt}.
\end{enumerate}
\end{lemma}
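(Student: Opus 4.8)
The plan is to prove both parts using the distance-function estimate \eqref{Q} together with Lipschitz continuity of $f$ on $\cS_1$, plus the existence of an optimal solution to \eqref{general-opt}. Throughout, observe that the feasible set $\cS_1 \cap \cS_2$ of \eqref{general-opt} is contained in $\cS_1$, the feasible set of \eqref{P1}, and that on $\cS_1 \cap \cS_2$ we have $Q \equiv 0$, so $f + \lambda Q$ agrees with $f$ there.

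For part (i), fix $\lambda \ge L_f$ and let $x^*$ be a global minimizer of \eqref{general-opt}. Take any $x \in \cS_1$; I want to show $f(x) + \lambda Q(x) \ge f(x^*)$. Since $\cS_1 \cap \cS_2$ is nonempty and closed, pick $\bar x \in P_{\cS_1 \cap \cS_2}(x)$ so that $\|x - \bar x\| = \dist(x, \cS_1 \cap \cS_2) \le Q(x)$. A subtlety: the Lipschitz estimate $|f(x) - f(\bar x)| \le L_f \|x - \bar x\|$ requires the segment from $x$ to $\bar x$ (or at least both endpoints) to lie in $\cS_1$; since $x \in \cS_1$ and $\bar x \in \cS_1 \cap \cS_2 \subseteq \cS_1$, and the statement says $f$ is Lipschitz "in $\cS_1$", I will read this as Lipschitz on $\cS_1$ so the bound applies to the pair $x, \bar x$. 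Then
\[
f(x) + \lambda Q(x) \ge f(\bar x) - L_f \|x - \bar x\| + \lambda Q(x) \ge f(\bar x) + (\lambda - L_f) Q(x) \ge f(\bar x) \ge f(x^*),
\]
where the last inequality uses that $\bar x$ is feasible for \eqref{general-opt}. Since $x^* \in \cS_1 \cap \cS_2 \subseteq \cS_1$ and $Q(x^*) = 0$ gives $f(x^*) + \lambda Q(x^*) = f(x^*)$, this shows $x^*$ minimizes \eqref{P1} over $\cS_1$.

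For part (ii), fix $\lambda > L_f$ and let $x^*$ be a global minimizer of \eqref{P1}. First I claim $x^* \in \cS_2$, i.e.\ $Q(x^*) = 0$: pick $\bar x \in P_{\cS_1 \cap \cS_2}(x^*)$, so $\|x^* - \bar x\| = \dist(x^*, \cS_1 \cap \cS_2) \le Q(x^*)$; then, exactly as above,
\[
f(x^*) + \lambda Q(x^*) \ge f(\bar x) + (\lambda - L_f) Q(x^*),
\]
while minimality of $x^*$ in \eqref{P1} and feasibility of $\bar x$ (with $Q(\bar x) = 0$) give $f(x^*) + \lambda Q(x^*) \le f(\bar x) + \lambda Q(\bar x) = f(\bar x)$. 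Combining, $(\lambda - L_f) Q(x^*) \le 0$, and since $\lambda - L_f > 0$ and $Q \ge 0$ we get $Q(x^*) = 0$, hence $\bar x = x^*$ up to the fact that $\dist(x^*,\cS_1\cap\cS_2) = 0$ forces $x^* \in \cS_1 \cap \cS_2$ by closedness. Now for any $y \in \cS_1 \cap \cS_2 \subseteq \cS_1$ we have $f(x^*) = f(x^*) + \lambda Q(x^*) \le f(y) + \lambda Q(y) = f(y)$, so $x^*$ is a global minimizer of \eqref{general-opt}.

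The only real obstacle is the interpretation of "Lipschitz continuous in $\cS_1$" and whether one may apply the estimate to an arbitrary pair $x \in \cS_1$, $\bar x \in \cS_1 \cap \cS_2$ — I will simply take the hypothesis to mean $|f(u) - f(v)| \le L_f\|u - v\|$ for all $u, v \in \cS_1$, which is how it is used. The strict inequality $\lambda > L_f$ in (ii) is essential precisely to conclude $Q(x^*) = 0$; with $\lambda = L_f$ one only gets that $x^*$ attains the optimal value of \eqref{general-opt} but need not be feasible. Existence of an optimal solution to \eqref{general-opt} is used in (i) to make the comparison value $f(x^*)$ meaningful and is not otherwise needed.
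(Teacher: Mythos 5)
Your proof is correct and follows essentially the same route as the paper: both parts rest on comparing $f$ at a point of $\cS_1$ with $f$ at its projection onto $\cS_1\cap\cS_2$ via the Lipschitz bound and the inequality $Q(x)\ge\dist(x,\cS_1\cap\cS_2)$, with the strict inequality $\lambda>L_f$ forcing feasibility in part (ii). The only cosmetic difference is that you argue part (i) pointwise while the paper phrases the same computation as a chain of infima; your reading of ``Lipschitz continuous in $\cS_1$'' matches how the paper uses the hypothesis.
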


\begin{proof}
%Let $P_\Omega(x)$ denote the projection of $x\in \R^n$ onto a set $\Omega$.
Since $f$ is Lipschitz continuous in $\cS_1$ with a Lipschitz constant  $L_f>0$, it follows that for all $\lambda \ge L_f$,
\[
f(x) + \lambda \ \dist(x,\cS_1\cap\cS_2) \ \ge \ f(y) \quad\quad \forall x \in \cS_1,\ \forall y \in P_{\cS_1\cap\cS_2}(x),
\]
which together with \eqref{Q} implies that for any $\lambda \ge L_f$,
\[
f(x) + \lambda Q(x) \ \ge \  f(y) \quad\quad \forall x \in \cS_1,\ \forall y \in P_{\cS_1\cap\cS_2}(x).
\]
Using this relation, one can observe that  for all $\lambda \ge L_f$,
\[
\begin{split}
\inf\limits_{x\in\cS_1} \left\{f(x) + \lambda Q(x)\right\} & \ge \inf\limits_{x\in\cS_1, y \in P_{\cS_1\cap\cS_2}(x)} f(y) \ = \ \inf\limits_{x\in\cS_1\cap\cS_2} f(x)\\
&= \inf_{x\in \cS_1\cap \cS_2} f(x) + \lambda Q(x) \ge \inf\limits_{x\in\cS_1} \left\{f(x) + \lambda Q(x)\right\},
\end{split}
\]
where the second equality follows from the fact that $Q(x)=0$ for all $x\in \cS_1 \cap \cS_2$. Statement (i) follows immediately from this relation.

We next prove statement (ii). Suppose that $x^*\in \cS_1$ is a global minimizer of \eqref{P1} for some
$\lambda > L_f$. Using this and $Q(x)=0$ on $\cS_1\cap \cS_2$, we have
\beq \label{fQ}
f(x^*) + \lambda Q(x^*) \le f(x),
\eeq 
for any $x \in \cS_1\cap\cS_2$. This together with \eqref{Q} implies that for any $x \in P_{\cS_1\cap\cS_2}(x^*)$,
\[
f(x^*) + \lambda \ \dist(x^*,\cS_1\cap\cS_2) \ \le \  f(x).
\]
Using this relation and Lipschitz continuity of $f$, one can obtain that for any $x \in P_{\cS_1\cap\cS_2}(x^*)$,
\[
\dist(x^*,\cS_1\cap\cS_2) \le \frac{1}{\lambda}(f(x)- f(x^*)) \ \le\   \frac{L_f}{\lambda} \|x-x^*\|
\ = \ \frac{L_f}{\lambda} \dist(x^*,\cS_1\cap\cS_2),
\]
which along with $\lambda>L_f$ yields $\dist(x^*,\cS_1\cap\cS_2)=0$, that is,
$x^*\in\cS_1\cap\cS_2$. In addition, by \eqref{fQ} and $Q(x^*) \ge 0$, one can see that 
$f(x) \ge f(x^*)$ for any $x \in \cS_1\cap\cS_2$. Hence, $x^*$ is a global minimizer of 
\eqref{general-opt}.
\end{proof}

\gap

We next state a result regarding  the local minimizers of problems
\eqref{general-opt} and \eqref{P1}, whose proof is similar to that of Lemma~\ref{general-penalty}
and thus omitted.

\begin{corollary} \label{local-penalty}
 Assume that $f$ is locally Lipschitz continuous in $\cS_1$ and $Q$ satisfies \eqref{Q}. Suppose that
$x^*$ is a local minimizer of \eqref{general-opt}. Then there exists a $\lambda^* > 0$ such that
$x^*$ is a local minimizer of \eqref{P1} whenever $\lambda \ge \lambda^*$.
\end{corollary}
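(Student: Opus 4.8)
The plan is to run the argument of part~(i) of Lemma~\ref{general-penalty} locally around $x^*$. Since $x^*$ is a local minimizer of \eqref{general-opt}, there is $r>0$ with $f(x^*)\le f(y)$ for all $y\in\cS_1\cap\cS_2\cap\cB(x^*;r)$. Since $f$ is locally Lipschitz continuous in $\cS_1$ (in particular at the point $x^*\in\cS_1$), there are $\delta>0$ and $L_f>0$ such that $|f(u)-f(v)|\le L_f\|u-v\|$ for all $u,v\in\cS_1\cap\cB(x^*;\delta)$. I would then set $\lambda^*:=L_f$ and $\rho:=\tfrac12\min\{r,\delta\}$, and show that $f(x^*)+\lambda Q(x^*)\le f(x)+\lambda Q(x)$ for every $x\in\cS_1\cap\cB(x^*;\rho)$ and every $\lambda\ge\lambda^*$, which is precisely the asserted local optimality.

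The key step is a ``doubling'' observation guaranteeing that the projected points stay inside both neighborhoods. Fix $x\in\cS_1\cap\cB(x^*;\rho)$ and pick any $y\in P_{\cS_1\cap\cS_2}(x)$, which is nonempty because $\cS_1\cap\cS_2$ is nonempty (it contains $x^*$) and closed. Since $x^*\in\cS_1\cap\cS_2$, we have $\|x-y\|=\dist(x,\cS_1\cap\cS_2)\le\|x-x^*\|\le\rho$, hence $\|y-x^*\|\le\|y-x\|+\|x-x^*\|\le 2\rho\le\min\{r,\delta\}$. Therefore $y\in\cS_1\cap\cS_2\cap\cB(x^*;r)$, so $f(y)\ge f(x^*)$, and moreover both $x$ and $y$ lie in $\cS_1\cap\cB(x^*;\delta)$, so the Lipschitz estimate applies to the pair $(x,y)$. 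Combining \eqref{Q}, the bound $f(x)\ge f(y)-L_f\|x-y\|$, the hypothesis $\lambda\ge L_f$, and $Q(x^*)=0$, one obtains
\[
f(x)+\lambda Q(x)\ \ge\ f(x)+\lambda\,\dist(x,\cS_1\cap\cS_2)\ =\ f(x)+\lambda\|x-y\|\ \ge\ f(y)+(\lambda-L_f)\|x-y\|\ \ge\ f(y)\ \ge\ f(x^*)\ =\ f(x^*)+\lambda Q(x^*),
\]
completing the argument.

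There is no genuine obstacle here; the proof is essentially a localized copy of part~(i) of Lemma~\ref{general-penalty}, which is why it is omitted in the text. The one point that needs a little care is the choice of radius: one must shrink the ball about $x^*$ (here by the factor $\tfrac12$) so as to absorb the loss $\|y-x^*\|\le 2\|x-x^*\|$ incurred when replacing $x$ by its nearest point $y$ in $\cS_1\cap\cS_2$, thereby ensuring simultaneously that $y$ lands in the ball of radius $r$ on which $f(x^*)$ is a lower bound and in the ball of radius $\delta$ on which $f$ is Lipschitz. Note also that, in contrast to part~(ii) of Lemma~\ref{general-penalty}, here $\lambda\ge L_f$ (rather than strict inequality) suffices, so one may simply take $\lambda^*=L_f$.
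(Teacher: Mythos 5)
Your proof is correct and is exactly the localized version of the argument for part~(i) of Lemma~\ref{general-penalty} that the paper has in mind when it omits the proof; the halving of the radius to keep the projected point $y$ inside both the optimality ball and the Lipschitz ball is the right (and only) point requiring care, and you handle it properly. Nothing to add.
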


  \subsection{When $\Phi$ is locally Lipschitz continuous}\label{sec41}

  In this subsection, we consider the case where $\Phi$ is locally Lipschitz continuous and derive the corresponding exact regularization results concerning models \eqref{eq:P0} and \eqref{eq:P1}. This covers a lot of regularization functions used in practice, including many difference-of-convex functions; see, for example, \cite{YLHX14,GHLYZ13}.

  Our main result  concerns local and global minimizers of models \eqref{eq:P0} and \eqref{eq:P1}.
  \begin{theorem}[{{\bf Local \& global minimizers}}]\label{thm1}
    Suppose that $\Phi$ is locally Lipschitz continuous in $S_1$ and $x^*$ is a local minimizer of \eqref{eq:P0}. Then there exists a $\lambda^* > 0$ such that $x^*$ is a local minimizer of \eqref{eq:P1} whenever $\lambda \ge \lambda^*$. If $\Phi$ is indeed globally Lipschitz continuous in $S_1$, then there exists a $\lambda^* > 0$ such that
    any global minimizer of \eqref{eq:P0} is a global minimizer of \eqref{eq:P1} whenever $\lambda \ge \lambda^*$; moreover, if $x^*$ is a global minimizer of \eqref{eq:P1} for some $\lambda > \lambda^*$, then $x^*$ is a global minimizer of \eqref{eq:P0}.
%    Moreover, if $S_1$ is in addition bounded with $\Delta:= \sup\{\|x - y\|:\; x\in S_1,\ y \in S_2\}<\infty$, then one can choose
%    \[
%    \lambda^* = L\cdot\max\left\{\frac{\Delta}{\sigma(\sigma - \|Ax_0 - b\|)},\left(1 + \frac{\Delta\|A\|}{\sigma - \|Ax_0 - b\|}\right)\beta\right\}.
%    \]
  \end{theorem}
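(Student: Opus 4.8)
The plan is to deduce all three assertions from the abstract exact-penalization results of Subsection~\ref{sec40} --- Corollary~\ref{local-penalty} for the local statement and Lemma~\ref{general-penalty} for the two global statements --- by a suitable identification of ingredients. Concretely, I would take $f=\Phi$, let $S_1$ and $S_2$ play the roles of $\cS_1$ and $\cS_2$ in \eqref{general-opt}, and define the penalty function
\[
Q(x) := C\left[(\|Ax-b\|^2-\sigma^2)_+ + \|(Bx-h)_+\|_1\right] \qquad (x\in S_1),
\]
where $C>0$ is the error-bound constant furnished by Lemma~\ref{lem0}. With this choice, \eqref{general-opt} is exactly \eqref{eq:P0}, and for a coefficient $\mu>0$ in front of $Q$ the abstract penalized problem \eqref{P1} reads $\min_{x\in S_1}\Phi(x)+\mu C[(\|Ax-b\|^2-\sigma^2)_+ + \|(Bx-h)_+\|_1] = \min_{x\in S_1}F_{\mu C}(x)$; in other words $F_\lambda = \Phi + (\lambda/C)Q$, which is the only bookkeeping identity needed to pass between the two formulations.

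Next I would verify the hypotheses of the abstract results. The sets $S_1$ (a polyhedron) and $S_2$ are closed, and $S_1\cap S_2 = S\neq\emptyset$ by the blanket assumption. The function $Q$ maps $S_1$ into $\R_+$, vanishes on $S$ (each $(\cdot)_+$ term is zero there), and satisfies $Q(x)\ge\dist(x,S)$ for every $x\in S_1$; this last property is precisely the inequality \eqref{err-bdd0}. Hence $Q$ satisfies \eqref{Q}. For the local statement $\Phi$ is assumed locally Lipschitz in $S_1$, and for the global statements it is assumed globally Lipschitz in $S_1$, so one may take $L_f$ to be a global Lipschitz constant of $\Phi$ on $S_1$.

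The local statement then follows from Corollary~\ref{local-penalty}: there is $\mu^*>0$ such that $x^*$ is a local minimizer of $\min_{x\in S_1}\Phi(x)+\mu Q(x)$ whenever $\mu\ge\mu^*$; translating via $F_\lambda=\Phi+(\lambda/C)Q$, the point $x^*$ is a local minimizer of \eqref{eq:P1} whenever $\lambda\ge\lambda^*:=C\mu^*$. For the first global statement, apply Lemma~\ref{general-penalty}(i) with $L_f$ a global Lipschitz constant of $\Phi$: for $\mu\ge L_f$ every global minimizer of \eqref{eq:P0} is a global minimizer of $\min_{x\in S_1}\Phi+\mu Q$, i.e.\ of \eqref{eq:P1} with $\lambda=\mu C$, so $\lambda^*:=CL_f$ works. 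The converse assertion is Lemma~\ref{general-penalty}(ii): if $x^*$ is a global minimizer of \eqref{eq:P1} for some $\lambda>\lambda^*=CL_f$, then, since $F_\lambda=\Phi+(\lambda/C)Q$ with $\lambda/C>L_f$, $x^*$ is a global minimizer of $\min_{x\in S_1}\Phi(x)+\mu Q(x)$ with $\mu=\lambda/C$, hence a global minimizer of \eqref{eq:P0}.

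The substantive work --- the local Hoffman-type error bound relating $\dist(\cdot,S)$ to the residual quantities --- has already been carried out in Lemma~\ref{lem0}, so the remaining argument is essentially bookkeeping and there is no serious obstacle. The one point that requires care is that Lemma~\ref{general-penalty} presumes that \eqref{eq:P0} (equivalently \eqref{general-opt}) actually possesses a global minimizer; since $\Phi\ge 0$ the infimum is finite, but attainment is not automatic when $S$ is unbounded, so this should either be treated as a standing assumption here or invoked from a coercivity/compactness condition (for instance when $S$ is bounded, as in Lemma~\ref{lem1}). A second, minor point is that the constant $C$ is not computable in general, so the argument only asserts the existence of $\lambda^*$, in agreement with the statement of the theorem.
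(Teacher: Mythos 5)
Your proposal is correct and follows essentially the same route as the paper: the paper's proof likewise invokes Lemma~\ref{lem0} to define $Q(x)=C[(\|Ax-b\|^2-\sigma^2)_+ + \|(Bx-h)_+\|_1]$ and then applies Corollary~\ref{local-penalty} for the local statement and Lemma~\ref{general-penalty} for the two global statements with $f=\Phi$, $\cS_1=S_1$, $\cS_2=S_2$. Your extra bookkeeping (the factor $C$ relating $\lambda$ to the abstract penalty coefficient, and the remark on attainment of the minimum in \eqref{general-opt}) is sound but not needed beyond what the paper records.
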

  \begin{proof}
    From Lemma~\ref{lem0}, we see that there exists a $C>0$ so that for all $x\in S_1$, we have
    \begin{equation*}
      {\rm dist}(x,S) \le C \left[(\|Ax - b\|^2 - \sigma^2)_+ + \|(Bx - h)_+\|_1\right].
    \end{equation*}
    The first conclusion now follows immediately from Corollary~\ref{local-penalty} by setting $f(x) = \Phi(x)$, $Q(x) =  C \left[(\|Ax - b\|^2 - \sigma^2)_+ + \|(Bx - h)_+\|_1\right]$, $\cS_1 = S_1$ and $\cS_2 = S_2$, while the second conclusion follows from Lemma~\ref{general-penalty}.
  \end{proof}

  \begin{remark}\label{remXX}
  It is not hard to see from the proof of Theorem~\ref{thm1} that with an explicit error bound modulus $C > 0$ in \eqref{err-bdd0}, the $\lambda^*$ in the theorem can be chosen to be $CL$, where $L$ is a local (resp., global) Lipschitz constant  $\Phi$.
  \end{remark}

   In the next example, we present explicit exact penalty functions for problem~\eqref{ex:P0} with some specific choices of $\phi$.

  \begin{example}
    Notice that the fraction penalty function and the logistic penalty function considered in Example~\ref{example1} are (globally) Lipschitz continuous, and have a Lipschitz constant  $\alpha$. From Theorem~\ref{thm1} and Remark~\ref{remXX}, we conclude that any global minimizer of \eqref{ex:P0} is a global minimizer of the problem
    \[
    \min_t\ \lambda(|t-a|^2 - \gamma^2a^2)_+ + \phi(t),
    \]
    whenever $\lambda \ge \frac{\alpha}{\gamma a}$, since $C$ can be chosen to be $\frac{\|A^\dagger\|}{\sigma} = \frac{1}{\gamma a}$ by Lemma~\ref{lem-1}. The bridge penalty function, on the other hand, is locally Lipschitz continuous everywhere except at $0$. Since $\gamma \in (0,1)$, $t^p$ is Lipschitz continuous on $[(1-\gamma)a/2,\infty)$ with Lipschitz constant $p[(1-\gamma)a/2]^{p-1}$. From Theorem~\ref{thm1} and Remark~\ref{remXX}, we conclude that any local minimizer of \eqref{ex:P0} is a local minimizer of the problem
    \[
    \min_t\ \lambda(|t-a|^2 - \gamma^2a^2)_+ + \phi(t),
    \]
    whenever $\lambda\ge \frac{p[(1-\gamma)a/2]^{p-1}}{\gamma a}$.
  \end{example}

  \subsection{When $\Phi$ is not locally Lipschitz continuous at some points}\label{sec42}

  In this subsection, we suppose that $\Phi(x)$ is not locally Lipschitz continuous at some points. %Without loss of generality and for notational simplicity, we set $x_0 = 0$.
To proceed, we make an assumption for $\Phi$ that will be used subsequently. 

  \begin{assumption}\label{assum1}
    The function $\Phi(x) = \sum_{i=1}^n\phi(x_i)$ is continuous and nonnegative with $\phi(0) = 0$, and is locally Lipschitz continuous everywhere except at $0$. Moreover,
    for any $L > 0$, there exists an $\epsilon > 0$ such that whenever $|t|< \epsilon$, we have
    \begin{equation}\label{phiineq}
    \phi(t) \ge L|t|.
    \end{equation}
  \end{assumption}
  It is not hard to show that the widely used bridge penalty function $|x|^p$, for $0 < p < 1$, satisfies this assumption.

  \begin{theorem}[{{\bf Local minimizers}}]\label{thm2}
    Suppose that $x^*$ is a local minimizer of \eqref{eq:P0} with a $\Phi$ satisfying Assumption~\ref{assum1}. Then there exists a $\lambda^* > 0$ such that $x^*$ is a local minimizer of \eqref{eq:P1} whenever $\lambda \ge \lambda^*$.
  \end{theorem}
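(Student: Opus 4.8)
The plan is to exploit the separable form $\Phi=\sum_{i=1}^n\phi(x_i)$ together with the two complementary features of Assumption~\ref{assum1}: away from $0$ each $\phi$ is locally Lipschitz, while near $0$ one has $\phi(t)\ge L|t|$ with $L$ as large as desired. Put $I:=\{i:\ x^*_i\neq 0\}$ and let $\bar I$ be its complement, so $x^*_{\bar I}=0$. Fix $\rho_0\in(0,\min_{i\in I}|x^*_i|)$ and $L_I>0$ so that $\phi$ is $L_I$-Lipschitz on each interval $[x^*_i-\rho_0,x^*_i+\rho_0]$, $i\in I$. The strategy is: perturbations of $x^*$ that keep the $\bar I$-block at $0$ should be controlled by the locally Lipschitz theory already available (Theorem~\ref{thm1}/Corollary~\ref{local-penalty}) applied to a reduced problem in the variables $x_I$; perturbations that move the $\bar I$-block pay at least $L\|x_{\bar I}\|_1$ in the objective via \eqref{phiineq}, and, with $L$ chosen large once and for all, this surplus is enough to absorb everything else.

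Concretely, I would first record that $x^*_I$ is a local minimizer of the reduced problem $\min\{\Phi_I(w):=\sum_{i\in I}\phi(w_i):\ (w,0)\in S\}$ over $\R^{|I|}$ — immediate, since any feasible $(w,0)$ near $x^*$ satisfies $\Phi_I(w)=\Phi(w,0)\ge\Phi(x^*)=\Phi_I(x^*_I)$ — and that $\Phi_I$ is locally Lipschitz around $x^*_I$. The reduced feasible set $\widehat S:=\{w:\ D_Iw\le d,\ B_Iw\le h,\ \|A_Iw-b\|\le\sigma\}$ is again a polyhedron intersected with an ellipsoid, so, provided it admits a Lipschitzian error bound (see the last paragraph), Lemma~\ref{lem0} and Corollary~\ref{local-penalty} yield $\lambda_0,r_1>0$ with $x^*_I$ a local minimizer of $w\mapsto\lambda\big[(\|A_Iw-b\|^2-\sigma^2)_++\|(B_Iw-h)_+\|_1\big]+\Phi_I(w)$ over $\{w:\ D_Iw\le d\}\cap\cB(x^*_I;r_1)$ for every $\lambda\ge\lambda_0$.

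Next, for an arbitrary $x\in S_1$ close to $x^*$ set $\bar x:=(x_I,0)$. Because $\bar x$ and $x$ differ only in the small block $\bar I$, elementary estimates (using $(a+b)_+\le a_++|b|$ componentwise, and the fact that $\|Au-b\|$ stays bounded near $x^*$) show that each of the residuals $(\|A\cdot-b\|^2-\sigma^2)_+$, $\|(B\cdot-h)_+\|_1$, $\|(D\cdot-d)_+\|_1$ changes by at most $K\|x_{\bar I}\|$ in passing from $x$ to $\bar x$, and that $\bar x$ is $O(\|x_{\bar I}\|)$-close to $S_1\cap\{x_{\bar I}=0\}$. Combining this with the reduced statement above and with $\Phi(x)=\Phi_I(x_I)+\sum_{i\in\bar I}\phi(x_i)\ge\Phi_I(\bar x_I)+L\|x_{\bar I}\|_1$ (from \eqref{phiineq}, valid for $x$ close enough once $L$ is fixed), all the $O(\|x_{\bar I}\|)$ discrepancies are dominated by $L\|x_{\bar I}\|_1$ as soon as $L$ exceeds $L_I$ times $K$ and the reduced error-bound modulus; what remains is $F_\lambda(x)\ge\Phi(x^*)=F_\lambda(x^*)$ for every $\lambda$ beyond a finite threshold $\lambda^*$. (For $x\in S\cap\cB(x^*;r)$ the inequality is just local minimality of $x^*$, since then all residuals vanish.)

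The step I expect to be the main obstacle is making the bookkeeping of the previous paragraph airtight: having zeroed the $\bar I$-block one has left both $S_1$ and $S_2$, and the cost of re-entering — read off from the penalty residual — must be charged simultaneously against the linear surplus $L\|x_{\bar I}\|_1$ from \eqref{phiineq} and against the $\lambda$-scaled residuals already present at $x$, all while keeping $\lambda^*$ finite and independent of $x$. Closely related, and implicit in the reduction, is the requirement that the reduced constraint be non-degenerate, so that its quadratic penalty $(\|A_Iw-b\|^2-\sigma^2)_+$ is \emph{exact}: otherwise $\widehat S$ could be approached tangentially and a second-order penalty could never offset a first-order objective gain along an $I$-direction. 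This non-degeneracy is guaranteed, e.g., when the submatrix $A_I$ has full row rank — equivalently, when there is a Slater point of the quadratic constraint lying in the slice $\{x_{\bar I}=0\}$ — which I would therefore want available when invoking the Lipschitz result on the reduced problem.
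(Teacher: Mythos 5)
Your proposal follows the same route as the paper's own proof: split the coordinates by the support $I$ of $x^*$, observe that $x^*_I$ is a local minimizer of the reduced problem in the variables $x_I$ (where the objective is locally Lipschitz), invoke Corollary~\ref{local-penalty} together with a Lipschitzian error bound for the reduced constraint system to penalize exactly there, and then absorb the cost of zeroing the $\bar I$-block by the surplus $L\|x_{\bar I}\|_1$ supplied by \eqref{phiineq}, with $L$ taken at least as large as the Lipschitz modulus of the penalty term near $x^*$. The bookkeeping you worry about in your third paragraph is exactly what the paper carries out (it keeps the residual $\|(Dx-d)_+\|_1$ inside both the full and the reduced penalties, so that for points $x\in S_1$ that term simply vanishes and ``leaving $S_1$'' causes no extra charge). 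The only structural omission is the case $I=\emptyset$, which your reduction does not cover; the paper treats $x^*=0$ separately and easily, since near $0$ one has $\Phi(x)\ge L\|x\|$ for the relevant Lipschitz constant $L$, so $0$ is a local minimizer of $F_\lambda$ for every $\lambda>0$.

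The obstacle you isolate in your last paragraph, however, is not a loose end you merely failed to tie up: it is a genuine gap, and it is present in the paper's own proof as well. The proof applies the error bound ``with the $C$ given in Lemma~\ref{lem0}'' to the reduced system $\{w:\ \|A_Iw-b\|\le\sigma,\ B_Iw\le h,\ D_Iw\le d\}$, but the Slater condition in the blanket assumption concerns the full system and is not inherited by the slice $\{x_{\bar I}=0\}$; without it the Lipschitzian error bound for the reduced system can fail, the residual $(\|A_Iw-b\|^2-\sigma^2)_+$ is then only a second-order penalty, and the conclusion of the theorem can actually fail. Concretely, take $n=m=2$, $A=I$, $b=(1,1)^T$, $\sigma=1$, $S_1=\R^2$, $B$ vacuous, and $\Phi(x)=|x_1|^p+|x_2|^p$ with $0<p<1/2$. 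All blanket assumptions and Assumption~\ref{assum1} hold, and $x^*=(1,0)^T$ is a local minimizer of \eqref{eq:P0}: any feasible $x$ satisfies $x_2\ge \frac12(x_1-1)^2\ge 0$, so $|x_2|^p\ge 2^{-p}|x_1-1|^{2p}$, which (since $2p<1$) dominates the first-order decrease of $|x_1|^p$ as $x_1$ drops below $1$. Yet $F_\lambda(1-\delta,0)=\lambda\delta^2+(1-\delta)^p\le 1-\delta(p-\lambda\delta)<1=F_\lambda(x^*)$ for all $0<\delta<p/\lambda$, so $x^*$ is not a local minimizer of \eqref{eq:P1} for any $\lambda>0$; here the reduced feasible set is the singleton $\{1\}$, which has no Slater point. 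So the theorem needs an additional hypothesis of exactly the kind you request --- e.g.\ a Slater point for the reduced system, $\min_w\|A_Iw-b\|<\sigma$, or a penalty built from $(\|Ax-b\|-\sigma)_+$ rather than the squared residual --- and your instinct to demand it before invoking the Lipschitz machinery on the reduced problem is correct.
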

  \begin{proof}
    Suppose first that $x^* = 0$. Fix any bounded neighborhood $U$ of $0$ and any $\lambda > 0$. Let $L$ denote a Lipschitz constant  the function $x\mapsto \lambda[(\|Ax - b\|^2 - \sigma^2)_+ + \|(Bx - h)_+\|_1]$ on $U$. For this $L$, by Assumption~\ref{assum1}, there exists a neighborhood $V\subseteq U$ of zero such that $\Phi(x)\ge L\|x\|_1$ whenever $x\in V$. Then for any $x\in V$, we have
    \[
    \begin{split}
    &\lambda[(\|Ax - b\|^2 - \sigma^2)_+ + \|(Bx - h)_+\|_1] + \Phi(x)\\
    & \ge \lambda[(\|Ax - b\|^2 - \sigma^2)_+ + \|(Bx - h)_+\|_1] + L\|x\|_1 \\
    & \ge \lambda[(\|Ax - b\|^2 - \sigma^2)_+ + \|(Bx - h)_+\|_1] + L\|x\|\\
    & \ge \lambda[(\|b\|^2 - \sigma^2)_+ + \|(- h)_+\|_1],
    \end{split}
    \]
    where the last inequality follows from the definition of $L$ being a Lipschitz constant. This shows that $x^* = 0$ is a local minimizer of \eqref{eq:P1} for any $\lambda > 0$.
    Thus, to complete the proof, from now on, we assume that $x^* \neq 0$. Let $I$ denote the support of $x^*$, i.e., $I := \{i:\; x_i^* \neq 0\}$. Then $I \neq \emptyset$.

    Since $x^*$ is a local minimizer of \eqref{eq:P0}, it follows that $x^*_I$ is a local minimizer of the following optimization problem:
    \begin{equation}\label{eq:P0I}
      \begin{array}{rl}
        \min\limits_{x_I} & \sum_{i\in I}\phi(x_i)\\
        {\rm s.t.} & \|A_I x_I - b\|\le \sigma,\ \ B_Ix_I \le h,\ \ D_I x_I \le d.
      \end{array}
    \end{equation}
    Let $\tilde\epsilon = \frac{1}{2}\min\{|x^*_i|:\;i\in I\} > 0$. Then we can choose a small $\delta > 0$ so that $x^*_I$ is a local minimizer of \eqref{eq:P0I} and that $\min_{i\in I}|x_i|>\tilde\epsilon$ for all $x_I \in {\bf B}(x^*_I;\delta)$. Next, consider $\cS_1 = {\bf B}(x^*_I;\delta)$ and $\cS_2 = \{x_I:\; \|A_I x_I - b\|\le \sigma,\ \ B_Ix_I \le h,\ \ D_I x_I \le d\}$. Then we have from \cite[Lemma~4.9]{LiNgPong07} that
    \[
    {\rm dist}(x_I,\cS_1\cap \cS_2)\le 4\ {\rm dist}(x_I,\cS_2)
    \]
    for all $x_I\in \cS_1$. Using this and applying Corollary~\ref{local-penalty} with $f(x_I) = \sum_{i\in I}\phi(x_i)$, the $\cS_1$ and $\cS_2$ as defined above, and
    \[
    Q(x_I) = 4C\left[(\|A_Ix_I - b\|^2 - \sigma^2)_+ + \|(B_Ix_I - h)_+\|_1 + \|(D_I x_I - d)_+\|_1\right]
     \]
    for the $C$ given in Lemma~\ref{lem0}, we conclude that there exists a $\lambda^* > 0$ so that for any $\lambda \ge \lambda^*$, there is a neighborhood $U_I$ of $0$ with $U_I\subseteq {\bf B}(0;\frac{\delta}{2})$ such that $G^I_\lambda(x_I)\ge G^I_\lambda(x_I^*)$ whenever $x_I\in x_I^* + U_I$,
    where
    \[
    G^I_\lambda(x_I) = \lambda \left[(\|A_Ix_I - b\|^2 - \sigma^2)_+ + \|(B_Ix_I - h)_+\|_1 + \|(D_I x_I - d)_+\|_1\right] + \sum_{i\in I}\phi(x_i).
    \]
 %   Moreover, we may assume $U_I$ is bounded without loss of generality.

    We now show that $x^*$ is a local minimizer of \eqref{eq:P1} with $\lambda \ge \lambda^*$. Fix any $\epsilon > 0$ and any $\lambda \ge \lambda^*$. Consider the (bounded) neighborhood $U:= U_I\times (-\epsilon,\epsilon)^{n-|I|}$ of $0$ and let $M$ be a Lipschitz constant  the function
    \[
    g_\lambda(x) = \lambda \left[(\|Ax - b\|^2 - \sigma^2)_+ + \|(Bx - h)_+\|_1 + \|(D x - d)_+\|_1\right]
    \]
    over $x^* + U$. Taking $L = M$ in Assumption~\ref{assum1}, we see that there exists an $\epsilon_0 \in(0, \epsilon)$ such that \eqref{phiineq} holds with $M$ in place of $L$ whenever $|t|<\epsilon_0$. Then, for any $v\in U_I\times (-\epsilon_0,\epsilon_0)^{n-|I|}$ with $x^* + v \in S_1$, we have
    \begin{equation*}
      \begin{split}
        & F_{\lambda}(x^* + v) = F_{\lambda}\left(x^* + \begin{pmatrix}
          v_I\\
          v_{\bar I}
        \end{pmatrix}\right) = g_\lambda\left(x^* + \begin{pmatrix}
          v_I\\
          v_{\bar I}
        \end{pmatrix}\right) + \sum_{i\in I}\phi(x^*_i + v_i) + \sum_{i\notin I}\phi(v_i)\\
        & \ge g_\lambda \begin{pmatrix}
          x^*_I + v_I\\ 0
        \end{pmatrix} - M\|v_{\bar I}\| + \sum_{i\in I}\phi(x^*_i + v_i) + M\|v_{\bar I}\|_1\\
        & \ge G^I_\lambda(x_I^*) = F_{\lambda}(x^*),
      \end{split}
    \end{equation*}
    where the first inequality follows from the Lipschitz continuity of $g_\lambda$ with Lipschitz constant $M$ and \eqref{phiineq} with $L = M$, and the last inequality follows from the local optimality of $x_I^*$, while the second and the last equalities follow from $\|(D(x^* + v) - d)_+\|_1 = 0$ since $x^* + v\in S_1$. This shows that $x^*$ is locally optimal for \eqref{eq:P1} with $\lambda \ge \lambda^*$, and completes the proof.
  \end{proof}

%\section{Smoothing approximation}

  We next study $\epsilon$-minimizers of \eqref{eq:P0} and \eqref{eq:P1}, which are defined as follows.
  \begin{definition}
Let $\epsilon > 0$.
\begin{enumerate}
\item We say that $x_\epsilon$ is an $\epsilon$-minimizer of \eqref{eq:P0},  if $x_\epsilon\in S$ and $\Phi(x_\epsilon)\le \inf\limits_{x\in S} \Phi(x) + \epsilon$.
\item
 We say that $x_\epsilon$ is an $\epsilon$-minimizer of \eqref{eq:P1}, if $x_\epsilon\in S_1$ and
 $F_\lambda(x_\epsilon)\le \inf\limits_{x\in S_1} F_\lambda(x) + \epsilon$.
 \end{enumerate}
\end{definition}

 In order to establish results concerning $\epsilon$-minimizers, we also need the following definition.
  \begin{definition}
    We say that a globally Lipschitz continuous function $\Psi$ with a Lipschitz constant  $L$ is an $(L,\epsilon)$-approximation to $\Phi$ if
    $0\le \Psi(x) - \Phi(x)\le \epsilon$ for all $x$.
  \end{definition}

%Examples smoothing approximation of the objective  function $F_\lambda$ in (\ref{eq:P1}).
%
%\begin{definition}
%Let $f: R^n \rightarrow R$ be a continuous function. We call $
%\tilde{f}: R^n \times R_+ \rightarrow R$ a smoothing function of
%$f$, if $ \tilde{f}(\cdot,\mu)$ is continuously differentiable in
%$R^n$ for any fixed $\mu>0$, and for any $ x\in R^n$,
% $$\lim_{z\to x, \mu\downarrow
%0}\tilde{f}(z,\mu)=f(x).
%$$
%\end{definition}

As a concrete example of such an approximation, consider the case where $\Phi(x) = \sum_{i=1}^n\phi(x_i)$ with $\phi(t) = |t|^p$ for some $0<p<1$.
We can consider the following smoothing function of $|t|$:
\[
\psi_\mu(t)=\begin{cases}
  |t| \,\quad &
{\rm if} \, |t|\ge \mu,\\
\frac{t^2}{2\mu}+\frac{\mu}{2} \,\quad &  {\rm otherwise}.
\end{cases}
\]
 Notice that for a fixed $\mu>0$, the minimum and maximum values of $\psi_\mu(t)-|t|$ are attained at $|t|\ge\mu$ and $t=0$, respectively. Let
 \begin{equation*}
 \Psi_\mu(x)=\sum^n_{i=1}\psi_\mu(x_i)^p.
 \end{equation*}
 Then we have from the above discussion and Lemma~\ref{lem2} that
 \begin{equation}\label{s2}
 0\le \Psi_\mu(x)-\|x\|^p_p\le n\left(\frac{\mu}{2}\right)^p.
 \end{equation}
 Moreover, for a fixed $\mu>0$, the function $\Psi_\mu$ is continuously
differentiable. The maximum value of $|(\psi_\mu(t)^p)'|$ is attained at $t=\mu$, and hence we have
\begin{equation}\label{s3}
 |\Psi_\mu(x)-\Psi_\mu(y)|\le \sqrt{n}p\mu^{p-1}\|x-y\|.
 \end{equation}
The inequalities \eqref{s2} and \eqref{s3} show that $\Psi_\mu$ is a $(\sqrt{n}p\mu^{p-1},n(\mu/2)^p)$-approximation to $\Phi$
when $\phi(t) = |t|^p$.

% \begin{definition}
 %   We say that a globally Lipschitz function $\Psi$ is an $(L,\epsilon)$-approximation to $\Phi$ if
  %  \begin{enumerate}
   %   \item $0\le \Psi(x) - \Phi(x)\le \epsilon$ for all $x$.
%      \item For any $x$, $y$, we have $\|\Psi(x) - \Psi(y)\|\le L\|x - y\|$.
 %   \end{enumerate}
 % \end{definition}

  From the definition of an $(L,\epsilon)$-approximation $\Psi$,
 % From (\ref{s2}),
  it is easy to show that any global minimizer of
  \begin{equation}\label{eq:Pu}
    \begin{array}{rl}
      \min\limits_{x\in S_1} & \Psi(x) \\
      {\rm s.t.} & \|A x - b\| \le \sigma,\ \ Bx \le h,
    \end{array}
  \end{equation}
 is an $\epsilon$-minimizer of \eqref{eq:P0}. Conversely, any global minimizer $x^*$ of \eqref{eq:P0} is an $\epsilon$-minimizer of \eqref{eq:Pu}.
  Our next result concerns the global minimizers of \eqref{eq:P0} and the $\epsilon$-minimizers of \eqref{eq:P1}.

  \begin{theorem}[{{\bf $\epsilon$-minimizers}}]
    Suppose that $\Phi$ admits an $(L,\epsilon/2)$-approximation $\Psi$.
    Then for any global minimizer $x^*$ of \eqref{eq:P0}, there exists a $\lambda^* > 0$ so that $x^*$ is an $\epsilon$-minimizer of \eqref{eq:P1} whenever $\lambda \ge \lambda^*$, i.e.,
    \begin{equation}\label{eq:rel1}
    F_\lambda(x^*) \le \inf_{x\in S_1} F_\lambda(x) + \epsilon;
    \end{equation}
    in particular, one can take $\lambda^* = CL$, where $C$ is the constant in Lemma~\ref{lem0}.
  \end{theorem}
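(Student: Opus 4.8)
The plan is to compare $F_\lambda$ with the penalized objective built from the Lipschitz approximation $\Psi$, namely
\[
G_\lambda(x) := \lambda\left[(\|Ax-b\|^2-\sigma^2)_+ + \|(Bx-h)_+\|_1\right] + \Psi(x).
\]
Since $0\le \Psi(x)-\Phi(x)\le \epsilon/2$ for all $x$ and the bracketed penalty terms of $F_\lambda$ and $G_\lambda$ coincide, we have $0\le G_\lambda(x)-F_\lambda(x)\le \epsilon/2$ everywhere; in particular $F_\lambda(x)\ge G_\lambda(x)-\epsilon/2$ on $S_1$. So it suffices to bound $\inf_{x\in S_1}G_\lambda(x)$ below by $\inf_{x\in S}\Psi(x)$, which is precisely the substance of (the proof of) Lemma~\ref{general-penalty}(i) applied with $f=\Psi$, $\cS_1=S_1$, $\cS_2=S_2$, and $Q(x)=C[(\|Ax-b\|^2-\sigma^2)_+ + \|(Bx-h)_+\|_1]$: this $Q$ satisfies \eqref{Q} by Lemma~\ref{lem0}, and $\Psi$ is globally $L$-Lipschitz.

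Because $\Psi$ need not attain its infimum over $S$, I would reproduce that short computation directly rather than quote Lemma~\ref{general-penalty} as a black box. Concretely, for $x\in S_1$ pick $y\in P_S(x)$ (nonempty since $S$ is closed and nonempty); Lemma~\ref{lem0} together with $\lambda\ge CL$ gives $\lambda[(\|Ax-b\|^2-\sigma^2)_+ + \|(Bx-h)_+\|_1]\ge CL\,[(\|Ax-b\|^2-\sigma^2)_+ + \|(Bx-h)_+\|_1]\ge L\,\dist(x,S)=L\|x-y\|$, so by $L$-Lipschitz continuity of $\Psi$,
\[
G_\lambda(x)\ \ge\ \Psi(x)+L\|x-y\|\ \ge\ \Psi(y)\ \ge\ \inf_{z\in S}\Psi(z)\ \ge\ \inf_{z\in S}\Phi(z),
\]
the last step using $\Psi\ge\Phi$. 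Hence $\inf_{x\in S_1}F_\lambda(x)\ \ge\ \inf_{x\in S_1}G_\lambda(x)-\tfrac{\epsilon}{2}\ \ge\ \inf_{z\in S}\Phi(z)-\tfrac{\epsilon}{2}\ =\ \Phi(x^*)-\tfrac{\epsilon}{2}$, where the equality is the global optimality of $x^*$ for \eqref{eq:P0}.

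Finally, since $x^*\in S\subseteq S_1$ the penalty term of $F_\lambda(x^*)$ vanishes, so $F_\lambda(x^*)=\Phi(x^*)$. Combining this with the previous estimate yields
\[
F_\lambda(x^*)=\Phi(x^*)\le \inf_{x\in S_1}F_\lambda(x)+\tfrac{\epsilon}{2}\le \inf_{x\in S_1}F_\lambda(x)+\epsilon,
\]
and, as $x^*\in S_1$, this is exactly \eqref{eq:rel1} with $\lambda^*:=CL$. There is no genuine obstacle here; the only points requiring care are (a) tracking the error-bound modulus $C$ so that $\lambda^*=CL$ rather than $L$ appears — this is because in $F_\lambda$ the penalty parameter multiplies the raw infeasibility measure, whereas the $Q$ relevant to Lemma~\ref{general-penalty} carries the extra factor $C$ — and (b) phrasing everything in terms of infima so as not to presuppose that $\inf_{x\in S}\Psi$ (or $\inf_{x\in S_1}F_\lambda$) is attained.
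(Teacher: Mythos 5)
Your proof is correct and follows essentially the same route as the paper's: both combine the error bound of Lemma~\ref{lem0} with the global $L$-Lipschitz continuity of $\Psi$ to show that the penalized surrogate objective on $S_1$ is bounded below by $\inf_{S}\Psi$, and then transfer back to $\Phi$ via the sandwich $0\le\Psi-\Phi\le\epsilon/2$. Your bookkeeping is in fact marginally tighter (you compare $\Psi(P_S(x))$ directly to $\inf_S\Phi=\Phi(x^*)$ rather than passing through the $\epsilon/2$-minimality of $x^*$ for the $\Psi$-problem), which yields \eqref{eq:rel1} with $\epsilon/2$ in place of $\epsilon$.
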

  \begin{proof}
     From the definition of an $(L,\epsilon/2)$-approximation, we see that any global minimizer $x^*$ of \eqref{eq:P0} is an $\epsilon/2$-minimizer of \eqref{eq:Pu}. Moreover, since $\Psi$ is globally Lipschitz continuous with Lipschitz constant $L$, we have for any $x\in S_1$ that
    \begin{equation*}
      \begin{split}
       \tilde L\ {\rm dist}(x,S) + \Psi(x) = \tilde L\ \|x - P_S(x)\| + \Psi(x) \ge \Psi(P_S(x)) \ge \Psi(x^*) - \frac\epsilon 2,
      \end{split}
    \end{equation*}
    where $\tilde L$ is any number greater than or equal to $L$ and the second inequality follows from the $\epsilon/2$-optimality of $x^*$ for \eqref{eq:Pu}. This shows that $x^*$ is an $\epsilon/2$-minimizer of
    the optimization problem
    \begin{equation*}
      \min_{x\in S_1} \ \tilde L\ {\rm dist}(x,S) + \Psi(x).
    \end{equation*}
    Combining this fact with Lemma~\ref{lem0}, it is not hard to show that $x^*$ is an $\epsilon/2$-minimizer of
    \begin{equation*}
        \min\limits_{x\in S_1}\ C\tilde L \left[(\|Ax - b\|^2 - \sigma^2)_+ + \|(Bx - h)_+\|_1\right] + \Psi(x).
    \end{equation*}
    Using this and the fact that $0\le \Psi(x) - \Phi(x) \le \epsilon/2$ for all $x$, we have further that for all $x\in S_1$,
    \begin{equation*}
      \begin{split}
        & F_{C\tilde L}(x) = C\tilde L\left[(\|Ax - b\|^2 - \sigma^2)_+ + \|(Bx - h)_+\|_1\right] + \Phi(x)\\
        & \ge C\tilde L\left[(\|Ax - b\|^2 - \sigma^2)_+ + \|(Bx - h)_+\|_1\right] + \Psi(x) - \frac\epsilon 2\\
        & \ge C\tilde L\left[(\|Ax^* - b\|^2 - \sigma^2)_+ + \|(Bx^* - h)_+\|_1\right] + \Psi(x^*) - \frac\epsilon 2 - \frac\epsilon 2\\
        & = F_{C\tilde L}(x^*) - \epsilon,
      \end{split}
    \end{equation*}
    i.e., \eqref{eq:rel1} holds with $\lambda^* = CL$.
  \end{proof}

  So far we have shown that if $x^*$ is locally or globally optimal for \eqref{eq:P0}, then it is also optimal in some sense for \eqref{eq:P1}, when $\lambda$ is sufficiently large. Conversely, it is clear that if $x^*$ is optimal (locally or being an $\epsilon$-minimizer) for \eqref{eq:P1} for some $\lambda>0$, and $x^*$ is also feasible for \eqref{eq:P0}, then it is also optimal for \eqref{eq:P0}. Our next result studies the case when $x^*$ is not necessarily feasible for \eqref{eq:P0}.

  \begin{theorem}[{{\bf $\epsilon$-minimizers feasible for \eqref{eq:P0}}}]\label{thm3.5}
    Suppose that $\Phi(x) = \sum_{i=1}^n\phi(x_i)$ with $\phi$ being H\"{o}lder continuous for some $0<p<1$, i.e., there exists a $K > 0$ such that
    \[
    |\phi(s) - \phi(t)|\le K|s - t|^p
    \]
    for any $s$, $t\in \R$. Take any $\epsilon > 0$ and fix any $\tilde x\in S$. Consider any
    \[
    \lambda > \frac{K^\frac{1}{p}C\Phi(\tilde x)}{(n^{\frac{p}{2}-1}\epsilon)^\frac1p},
    \]
    with $C$ chosen as in Lemma~\ref{lem0}.
    Then for any global minimizer $x_\lambda$ of \eqref{eq:P1}, the projection $P_S(x_\lambda)$ is an $\epsilon$-minimizer of \eqref{eq:P0}.
  \end{theorem}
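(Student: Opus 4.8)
The plan is to use the global optimality of $x_\lambda$ together with the error bound of Lemma~\ref{lem0} to bound $\dist(x_\lambda,S)$, and then transfer this estimate to objective values via the H\"older continuity of $\phi$. First I would compare $x_\lambda$ against every point of $S$: since $S\subseteq S_1$ and the penalty term of $F_\lambda$ vanishes on $S$, global optimality over $S_1$ gives
\[
F_\lambda(x_\lambda)\ \le\ \inf_{x\in S}F_\lambda(x)\ =\ \inf_{x\in S}\Phi(x)\ \le\ \Phi(\tilde x).
\]
Because $\Phi\ge 0$, this single inequality yields two facts at once: $\Phi(x_\lambda)\le \inf_{x\in S}\Phi(x)$, and $\lambda[(\|Ax_\lambda-b\|^2-\sigma^2)_+ + \|(Bx_\lambda-h)_+\|_1]\le \Phi(\tilde x)$. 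Since $x_\lambda\in S_1$ (it minimizes over $S_1$), the error bound \eqref{err-bdd0} in Lemma~\ref{lem0} then gives $\dist(x_\lambda,S)\le C\Phi(\tilde x)/\lambda$.

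Next I would set $x^\sharp := P_S(x_\lambda)$, which is well defined (and unique) because $S$ is nonempty closed convex, so $\|x^\sharp - x_\lambda\| = \dist(x_\lambda,S)\le C\Phi(\tilde x)/\lambda$. Using the separable form $\Phi(x)=\sum_i\phi(x_i)$, the H\"older continuity of $\phi$, and H\"older's inequality with conjugate exponents $2/p$ and $2/(2-p)$ (equivalently, concavity of $t\mapsto t^{p/2}$), one gets
\[
|\Phi(x^\sharp)-\Phi(x_\lambda)|\ \le\ K\sum_{i=1}^n|x^\sharp_i - x_{\lambda,i}|^p\ \le\ Kn^{1-\frac p2}\|x^\sharp - x_\lambda\|^p\ \le\ Kn^{1-\frac p2}\left(\frac{C\Phi(\tilde x)}{\lambda}\right)^{p}.
\]
Combining this with $\Phi(x_\lambda)\le\inf_{x\in S}\Phi(x)$ gives $\Phi(x^\sharp)\le \inf_{x\in S}\Phi(x) + Kn^{1-p/2}(C\Phi(\tilde x)/\lambda)^p$.

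Finally, the stated threshold on $\lambda$ is precisely the condition making the trailing term at most $\epsilon$: the inequality $Kn^{1-p/2}(C\Phi(\tilde x)/\lambda)^p\le\epsilon$ rearranges to $\lambda\ge K^{1/p}C\Phi(\tilde x)/(n^{p/2-1}\epsilon)^{1/p}$. Since $x^\sharp\in S$, it follows that $x^\sharp = P_S(x_\lambda)$ is an $\epsilon$-minimizer of \eqref{eq:P0}. There is no serious obstacle here; the only points needing a little care are the dimension-dependent constant $n^{1-p/2}$ arising when passing between the $\ell_p$-type sum $\sum_i|a_i|^p$ and the Euclidean norm $\|a\|$, and checking that the algebraic rearrangement of the $\lambda$-threshold matches the exponents in the statement.
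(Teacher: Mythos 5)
Your proposal is correct and follows essentially the same route as the paper's proof: global optimality over $S_1\supseteq S$ gives both $\Phi(x_\lambda)\le\inf_{x\in S}\Phi(x)$ and the bound $\dist(x_\lambda,S)\le C\Phi(\tilde x)/\lambda$ via Lemma~\ref{lem0}, and the H\"older estimate $\sum_i|a_i|^p\le n^{1-p/2}\|a\|^p$ transfers this to $\Phi(P_S(x_\lambda))$ exactly as in \eqref{key2}. The algebraic rearrangement of the $\lambda$-threshold also checks out, so no gaps.
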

  \begin{proof}
  We first note from the global optimality of $x_\lambda$ that $F_\lambda(x_\lambda)\le F_\lambda(\tilde x)$, from which we immediately obtain that
  \begin{equation}\label{key1}
    (\|Ax_\lambda - b\|^2 - \sigma^2)_+ + \|(Bx_\lambda - h)_+\|_1 \le \frac{1}{\lambda} F_\lambda (x_\lambda)\le \frac{1}{\lambda} F_\lambda (\tilde x) = \frac 1\lambda \Phi(\tilde x).
  \end{equation}
  Next, for the projection $P_S(x_\lambda)$, we have
  \begin{equation}\label{key2}
    \begin{split}
      &\Phi(P_S(x_\lambda)) - \Phi(x_\lambda)
      \le K\sum_{i=1}^n \left|[P_S(x_\lambda)]_i - [x_\lambda]_i\right|^p = nK\frac1n\sum_{i=1}^n \left(\left|[P_S(x_\lambda)]_i - [x_\lambda]_i\right|^2\right)^\frac p2\\
      & \le nK\left(\frac1n\sum_{i=1}^n\left|[P_S(x_\lambda)]_i - [x_\lambda]_i\right|^2\right)^\frac p2 = Kn^{1 -\frac{p}{2}} \|P_S(x_\lambda) - x_\lambda\|^p\\[5 pt]
      & = Kn^{1 -\frac{p}{2}} {\rm dist}^p(x_\lambda,S)\le KC^pn^{1 -\frac{p}{2}}\left[(\|Ax_\lambda - b\|^2 - \sigma^2)_+ + \|(Bx_\lambda - h)_+\|_1\right]^p\\[5 pt]
      & \le Kn^{1 -\frac{p}{2}}\left(\frac{C\Phi(\tilde x)}{\lambda}\right)^p,
    \end{split}
  \end{equation}
  where the first inequality follows from the assumption on H\"{o}lder continuity, the second one holds due to the concavity of the function $t\mapsto t^\frac{p}2$ for nonnegative $t$, the third inequality follows from Lemma~\ref{lem0} and the fact that $x_\lambda\in S_1$, while the last one follows from \eqref{key1}. On the other hand, for any $x\in S$, we have from the optimality of $x_\lambda$ for \eqref{eq:P1} and the definition of $F_\lambda$ that $F_\lambda(x_\lambda) \le F_\lambda (x) = \Phi(x)$. From this we see immediately that
  \begin{equation*}
    \Phi(x_\lambda)\le F_\lambda(x_\lambda)\le  \inf_{x\in S}\Phi(x).
  \end{equation*}
  Combining this with \eqref{key2}, we obtain further that
  \[
  0\le \Phi(P_S(x_\lambda)) - \inf_{x\in S}\Phi(x) \le Kn^{1 -\frac{p}{2}}\left(\frac{C\Phi(\tilde x)}{\lambda}\right)^p < \epsilon,
  \]
  from our choice of $\lambda$. This shows that $P_S(x_\lambda)$ is an $\epsilon$-minimizer of \eqref{eq:P0}.
  \end{proof}

  From Lemma~\ref{lem2}, it is easy to see that $t\mapsto |t|^p$, $0<p<1$, is H\"{o}lder continuous with $K = 1$. Thus, we have the following immediate corollary when $\Phi(x) = \|x\|_p^p$, $0<p<1$.
  \begin{corollary}
    Suppose that $\Phi(x) = \|x\|_p^p$ for some $0<p<1$. Take any $\epsilon > 0$ and fix any $\tilde x\in S$. Consider any
    \[
    \lambda > \frac{C\|\tilde x\|_p^p}{(n^{\frac{p}{2}-1}\epsilon)^\frac1p},
    \]
    with $C$ chosen as in Lemma~\ref{lem0}.
    Then for any global minimizer $x_\lambda$ of \eqref{eq:P1}, the projection $P_S(x_\lambda)$ is an $\epsilon$-minimizer of \eqref{eq:P0}.
  \end{corollary}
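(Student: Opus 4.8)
The plan is to obtain this as an immediate specialization of Theorem~\ref{thm3.5}. The only substantive point is to pin down the H\"older constant for $\phi(t) = |t|^p$, after which everything reduces to substituting numbers into the threshold in Theorem~\ref{thm3.5}.

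First I would verify that $\phi(t) = |t|^p$ is H\"older continuous with exponent $p$ and constant $K = 1$. Given $s, t \in \R$, apply Lemma~\ref{lem2} to the nonnegative numbers $|s|$ and $|t|$ to get $\left||s|^p - |t|^p\right| \le \left||s| - |t|\right|^p$; then use the reverse triangle inequality $\left||s| - |t|\right| \le |s - t|$ together with the monotonicity of $r \mapsto r^p$ on $[0,\infty)$ to conclude $\left||s|^p - |t|^p\right| \le |s - t|^p$. Thus $|\phi(s) - \phi(t)| \le |s - t|^p$ for all $s,t\in\R$, so the hypothesis of Theorem~\ref{thm3.5} is met with $K = 1$.

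Next I would substitute $K = 1$ and note that here $\Phi(\tilde x) = \|\tilde x\|_p^p$. The threshold in Theorem~\ref{thm3.5}, namely $\dfrac{K^{1/p} C\,\Phi(\tilde x)}{(n^{p/2-1}\epsilon)^{1/p}}$, then becomes exactly $\dfrac{C\|\tilde x\|_p^p}{(n^{p/2-1}\epsilon)^{1/p}}$, which is the quantity appearing in the corollary, with $C$ the constant from Lemma~\ref{lem0}. Hence any $\lambda$ strictly exceeding this value meets the requirement of Theorem~\ref{thm3.5}, and that theorem then gives directly that for any global minimizer $x_\lambda$ of \eqref{eq:P1}, the projection $P_S(x_\lambda)$ is an $\epsilon$-minimizer of \eqref{eq:P0}.

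There is no real obstacle: the argument is a one-line verification of the H\"older constant via Lemma~\ref{lem2}, followed by a direct appeal to Theorem~\ref{thm3.5}. The only mild care needed is to confirm that $\Phi(x) = \|x\|_p^p = \sum_{i=1}^n |x_i|^p$ indeed has the separable form $\sum_{i=1}^n \phi(x_i)$ required by Theorem~\ref{thm3.5}, which is immediate.
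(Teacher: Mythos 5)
Your proposal is correct and matches the paper's own argument: the corollary is stated there as an immediate consequence of Theorem~\ref{thm3.5} once one notes, via Lemma~\ref{lem2}, that $t\mapsto|t|^p$ is H\"older continuous with constant $K=1$. Your additional step through the reverse triangle inequality to pass from $\bigl||s|-|t|\bigr|^p$ to $|s-t|^p$ is a correct (and slightly more explicit) justification of the same fact.
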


  \section{Algorithm}\label{sec5}

  In this section we propose a penalty method for solving problem \eqref{eq:P0}. Based on our discussion in the previous section, a natural penalty method for solving \eqref{eq:P0} would be to solve
the problem \eqref{eq:P1} once with an exact penalty parameter $\lambda$. This approach is, however, not appealing in practice because such $\lambda$ may be hard to estimate, or it may be 
over-estimated and the resulting penalty problem becomes very ill-conditioned. To circumvent these potential difficulties, we propose a practical penalty method that solves a sequence of penalty subproblems in the form of \eqref{eq:P1} with a gradually increased penalty parameter. In addition, the approximate solution of the current subproblem will be used as the starting point for solving the next subproblem.

  Our algorithm is presented in Section~\ref{sec:subalg}, where we show that any cluster point of the sequence generated from our algorithm is a KKT point of problem~\eqref{eq:P0}, under a suitable constraint qualification. To prepare for our convergence analysis, we start by discussing the first-order optimality conditions for problems \eqref{eq:P0} and \eqref{eq:P1} and describing the constraint qualification in Section~\ref{sec3}.

  \subsection{First-order optimality conditions}\label{sec3}

In this subsection, we discuss the first-order optimality conditions for problems \eqref{eq:P0} and \eqref{eq:P1}.

  We first look at model \eqref{eq:P1}. Since the objective is a sum of a locally Lipschitz continuous function and the lower semicontinuous function $\Phi+\delta_{S_1}$, it follows from \cite[Theorem~8.15]{Rock98}, \cite[Theorem~10.1]{Rock98} and \cite[Exercise~10.10]{Rock98} that at any locally optimal solution $\bar x$ of \eqref{eq:P1}, we have
  \begin{equation}\label{P11stoptimality}
    0 \in \partial (\lambda (\|A\cdot - b\|^2 - \sigma^2)_+)(\bar x) + \partial (\lambda\|(B\cdot - h)_+\|_1)(\bar x) + \partial (\Phi + \delta_{S_1})(\bar x).
  \end{equation}
  This motivates the following definition.

  \begin{definition}[{{\bf First-order stationary point of \eqref{eq:P1}}}]
    We say that $x^*$ is a first-order stationary point of \eqref{eq:P1} if $x^* \in S_1$ and \eqref{P11stoptimality} is satisfied with $x^*$ in place of $\bar x$.
  \end{definition}

  In the special case where $\Phi(x) = \sum_{i=1}^n\phi(x_i)$ with $\phi(t) = |t|^p$, it is easy to check that $\partial \phi(t) = \{p\,{\rm sign}(t)\,|t|^{p-1}\}$ whenever $t\neq 0$ and, from Lemma~\ref{lem:lp} (i), we have $\partial \phi(0) = \R$. Moreover, for the first subdifferential in \eqref{P11stoptimality}, we have the following explicit expression
  \begin{equation} \label{sub-diff}
    \partial (\lambda (\|A\cdot - b\|^2 - \sigma^2)_+)(\bar x) =
    \begin{cases}
      0 & {\rm if\ } \|A\bar x - b\| < \sigma,\\
      {\rm conv}\{0,2\lambda A^T(A\bar x - b)\} &  {\rm if\ }\|A\bar x - b\| = \sigma,\\
      2\lambda A^T(A\bar x - b) & {\rm otherwise. \ }
    \end{cases}
  \end{equation}
  Thus, in the case when $B$ is vacuous and $S_1 = \R^n$, we have that $x^*$ is a first-order stationary point of \eqref{eq:P1} if and only if
  \begin{equation}\label{Lp1stordereach}
  0 = 2\nu\lambda [A^T(A x^*- b)]_i + p\,{\rm sign}(x^*_i)\,|x^*_i|^{p-1},\ \ \forall i\in I
  \end{equation}
  with $I = \{i:\; x^*_i \neq 0\}$ for some $\nu$ satisfying
  \[
  \nu
    \begin{cases}
      = 0 & {\rm if\ } \|Ax^*- b\| < \sigma,\\
      \in [0,1] &  {\rm if\ }\|Ax^* - b\| = \sigma,\\
      = 1 & {\rm otherwise. \ }
    \end{cases}
  \]
  This is because the inclusion \eqref{P11stoptimality} is trivial for $i\notin I$. Using the definition of $I$, it is not hard to see that \eqref{Lp1stordereach} is further equivalent to
  \begin{equation}\label{Lp1storderall}
    0 = 2\nu \lambda{\rm Diag}(x^*)A^T(A x^*- b) + p |x^*|^{p},
  \end{equation}
  with the same $\nu$ defined above.

  We next turn to the KKT points of \eqref{eq:P0}. We recall from \cite[Theorem~8.15]{Rock98} that at any locally optimal solution $\bar x$ of \eqref{eq:P0}, we have
  \begin{equation}\label{P01stoptimality}
    0 \in \cN_{S_2}(\bar x) + \partial (\Phi + \delta_{S_1})(\bar x),
  \end{equation}
  assuming the following constraint qualification holds:
  \begin{equation}\label{CQ0}
    -\partial^{^\infty}\!(\Phi + \delta_{S_1})(\bar x) \cap \cN_{S_2}(\bar x) = \{0\}.
  \end{equation}
  %In view of the strict monotonicity assumption on $\phi$ and the fact that $0\notin S$, we must have $\|A\bar x - b\| = \sigma$ at any local minimizer.
%  Hence, the condition \eqref{opt} is equivalent to
%  \begin{equation}\label{P01stoptimality}
%    0 \in \{\mu A^T(A\bar x - b):\mu \ge 0\} + \partial \Phi(\bar x).
%  \end{equation}
  This motivates the following definition.
  \begin{definition}[{{\bf KKT point of \eqref{eq:P0}}}]
    We say that $x^*$ is a KKT point of \eqref{eq:P0} if $x^*\in S$ and \eqref{P01stoptimality} is satisfied with $x^*$ in place of $\bar x$.
  \end{definition}
%  \begin{remark}
%    Note that our definition of KKT point is different from the usual definition of KKT points in that we only consider points that lie on the boundary of the feasible set. This is reasonable for our problem since only these points are candidates for local minimizers of \eqref{eq:P0}, under our assumption on $\Phi$.
%  \end{remark}
  Since there exists $x_0$ with $\|Ax_0-b\| < \sigma$, in the case when $B$ is vacuous and $S_1 = \R^n$, we have
  \begin{equation}\label{eq:normalS}
    \cN_S(\bar x) =\begin{cases}
       \{\mu A^T(A\bar x - b):\; \mu \ge 0\}\neq \{0\}& {\rm if}\ \|A\bar x - b\| = \sigma,\\
       \{0\}& {\rm if}\ \|A\bar x - b\| < \sigma;
    \end{cases}
  \end{equation}
  see, for example, Theorem~1.3.5 in \cite[Section~D]{HiLe01}.
  In the special case where $\Phi(x) = \sum_{i=1}^n\phi(x_i)$ with $\phi(t) = |t|^p$ and that $B$ is vacuous and $S_1 = \R^n$, similarly as above, one can see that an $x^*$ satisfying $\|Ax^*-b\| = \sigma$ is a KKT point of \eqref{eq:P0} if and only if there exists a $\mu \ge 0$ so that
  \[
  0 = \mu [A^T(A x^*- b)]_i + p\,{\rm sign}(x^*_i)\,|x^*_i|^{p-1},\ \ \forall i\in I,
  \]
  with $I = \{i:\; x^*_i \neq 0\}$. This condition is further equivalent to% the existence of $\mu \ge 0$ so that
  \begin{equation}\label{Lp1storder}
    0 = \mu {\rm Diag}(x^*)A^T(A x^*- b) + p |x^*|^{p}.
  \end{equation}
  On the other hand, we recall from Lemma~\ref{lem:lp} (ii) that
  \[
  \partial^{^\infty}\!\!\Phi(x^*) = \{v:\; v_i = 0 {\rm \ for\ }i \in I\}.
  \]
  Since $\cN_S(x^*) = \{\mu A^T(Ax^* - b):\; \mu \ge 0\}$, the constraint qualification \eqref{CQ0} is equivalent to
  $[A^T(Ax^* - b)]_i$ being nonzero for some $i\in I$. From the definition of $I$, this constraint qualification can be equivalently formulated as
  \begin{equation}\label{CQ}
    {\rm Diag}(x^*)A^T(Ax^* - b) \neq 0.
  \end{equation}

On passing, recall from Proposition~5.3.1~(i) and Remark~5.3.2 in \cite[Section~A]{HiLe01} that we have
\[
\cN_{S_2}(x) = \cN_{\|A\cdot - b\|\le \sigma}(x) + \cN_{B\cdot \le h}(x)
\]
at any $x\in S_2$, thanks to the existence of $x_0\in S$ with $\|Ax_0 - b\|< \sigma$ by our blanket assumption. It is then not hard to see from the definitions that any first-order stationary point of \eqref{eq:P1} that lies in $S$ is a KKT point of \eqref{eq:P0}. Conversely,
any KKT point of \eqref{eq:P0} is a first-order stationary point of \eqref{eq:P1} for some $\lambda > 0$.

%\begin{theorem} \label{kkt-statpt}
%Any KKT point $x^*$ of problem \eqref{eq:P0} is a first-order stationary point of problem \eqref{eq:P1} for some $\lambda > 0$.
%\end{theorem}
%
%\begin{proof}
%Suppose that $x^*$ is a KKT point of  problem \eqref{eq:P0}. Then either $\|Ax^*-b\|=\sigma$ or $\|Ax^* - b\|<\sigma$. We divide the proof into two separate cases.
%
%Case 1) $\|Ax^*-b\|=\sigma$. In view of \eqref{sub-diff}, \eqref{eq:normalS} and $\|Ax^*-b\|=\sigma$, one can see that $\bar x=x^*$ satisfies \eqref{P11stoptimality}
%for any $\lambda > \mu/2$.
%
%Case 2) $\|Ax^*-b\|<\sigma$. In this case, it is clear that $\bar x=x^*$ satisfies \eqref{P11stoptimality}
%for any $\lambda > 0$.
%\end{proof}

Before ending this subsection, we comment on the magnitude of the nonzero entries of a first-order stationary point $x^*$ of \eqref{eq:P1}, assuming $\Phi(x) = \sum_{i=1}^n\phi(x_i)$ for some continuous function $\phi$. To facilitate comparison with existing work, we focus on the case where $B$ is vacuous and $S_1 = \R^n$. Note that in this case, the definition of $F_{\lambda}(x)$ reduces to $\lambda (\|Ax - b\|^2 - \sigma^2)_+ + \Phi(x)$. Then it follows from the stationarity of $x^*$ and \eqref{P11stoptimality} that there exists $0\le \nu\le 1$ so that at any $i$ with $x_i^*\neq 0$, we have for some $\xi_i\in \partial \phi(x_i^*)$,
  \[
  -\xi_i = 2\nu\lambda[A^T(Ax^* - b)]_i.
  \]
  Let $x^\diamond$ be chosen so that $F_{\lambda}(x^*)\le F_{\lambda}(x^\diamond)$. Then for each $i$ with $x_i \neq 0$,
  \begin{equation}\label{upperbound}
    \begin{split}
      &|\xi_i| \le 2\lambda\|A^T(Ax^* - b)\| \le 2\lambda \|A\|\|Ax^* - b\| \\
      & \le 2\sqrt{\lambda}\|A\|\sqrt{(\lambda \|Ax^* - b\|^2 - \lambda \sigma^2)_+ + \lambda \sigma^2}\\
      & \le 2\sqrt{\lambda}\|A\|\sqrt{F_\lambda(x^*) + \lambda \sigma^2} \le 2\sqrt{\lambda}\|A\|\sqrt{F_\lambda(x^\diamond) + \lambda\sigma^2},
    \end{split}
  \end{equation}
  where the fourth inequality follows from the nonnegativity of $\Phi$, and the last inequality follows from the choice of $x^\diamond$. A concrete lower bound can be derived for some specific $\phi$. For example, consider $\phi(t) = |t|^p$ for $p\in (0,1)$. Then we have from \eqref{upperbound} that for $x_i^*\neq 0$,
  \begin{equation}\label{lowerbd}
  p|x_i^*|^{p-1}\le 2\sqrt{\lambda}\|A\|\sqrt{F_\lambda(x^\diamond) + \lambda\sigma^2} \Longrightarrow |x_i^*|\ge \left(\frac{p}{2\sqrt{\lambda}\|A\|\sqrt{F_\lambda(x^\diamond) + \lambda\sigma^2}}\right)^{\frac1{1-p}} > 0.
  \end{equation}
  Since local minimizers of \eqref{eq:P0} are local minimizers of \eqref{eq:P1} for some $\lambda^* > 0$ according to Theorem~\ref{thm2}, and local minimizers of \eqref{eq:P1} are first-order stationary points of \eqref{eq:P1}, the above discussion also gives a lower bound on the magnitude of the nonzero entries of the local minimizers of \eqref{eq:P0} when $B$ is vacuous and $S_1 = \R^n$.

  \begin{remark}
    In the recent paper \cite{CXY}, the authors derived a lower bound on the magnitudes of the nonzero entries of any first-order stationary point $\hat x$ of \eqref{eq:P-1} with $H_\lambda(x) = \lambda \|Ax - b\|^2 + \|x\|_p^p$ for some $0<p<1$. Their lower bound is given by
    \[
    |\hat x_i| \ge \left(\frac{p}{2\sqrt{\lambda}\|A\|\sqrt{H_\lambda(\tilde x)}}\right)^\frac{1}{1-p} > 0,\ {\it for\ } \hat x_i \neq 0,
    \]
    with $\tilde x$ chosen so that $H_\lambda(\hat x)\le H_\lambda(\tilde x)$; see \cite[Theorem~2.3]{CXY}. This lower bound is similar to \eqref{lowerbd} except that $F_\lambda(x^\diamond) + \lambda\sigma^2$ is replaced by $H_\lambda(\tilde x)$. Notice that when $x^\diamond = \tilde x$, we always have $F_\lambda(x^\diamond) + \lambda\sigma^2 \ge H_\lambda(x^\diamond)$, and these two values are the same if $\|Ax^\diamond - b\| \ge \sigma$. In particular, when $x^\diamond = \tilde x = 0$ and $\|b\|\ge \sigma$, the guaranteed lower bounds for both models are the same and is given by $\big(\frac{p}{2\lambda\|A\|\|b\|}\big)^{\frac1{1-p}}$.
  \end{remark}

  \subsection{Penalty method for solving \eqref{eq:P0}}\label{sec:subalg}

In this subsection, we present details of our penalty method for solving \eqref{eq:P0}. Before proceeding, we make the following assumption on $\Phi$ and $S_1$, which is standard in 
guaranteeing the sequence generated by an algorithm is bounded.
\begin{assumption}\label{assum2}
  The function $\Phi + \delta_{S_1}$ has bounded level sets.
\end{assumption}
%The first part of the assumption . The second statement in the assumption means that the regularization function $\Phi$ has been properly chosen so that \eqref{eq:P0} does not have stationary points that lie in the interior of $S$. This assumption is reasonable for sparse recovery problems where many typical regularization functions, including the bridge penalty, the fraction penalty and the logistic penalty discussed in Example~\ref{example1}, have the origin as the {\em unique} stationary point, which does not lie in $S$.

Based on our previous discussions, an $\epsilon$-minimizer of \eqref{eq:P0} can be obtained by finding a globally optimal solution of \eqref{eq:P1} with a sufficiently large $\lambda$.  This approach is,  however,  not appealing because such $\lambda$ may be hard to estimate, or it may be over-estimated  and the resulting penalty problem becomes very ill-conditioned.
Instead, it is natural to solve a sequence of problems in the form of \eqref{eq:P1} in which $\lambda$ gradually
increases. This scheme is commonly used in the classical penalty method. Also, notice that the first part of the
objective of \eqref{eq:P1} is convex but nonsmooth. For an efficient implementation, we solve a sequence of partially smooth
counterparts of \eqref{eq:P1} in the form of
\begin{equation}\label{eq:P1-mu}
 \min_{x\in S_1} \ F_{\lambda,\mu}(x) := f_{\lambda,\mu}(x) + \Phi(x)
\end{equation}
for some $\lambda, \mu >0$,
where
\[
\begin{split}
f_{\lambda,\mu}(x) := h_{\lambda,\mu}(\|Ax - b\|^2 - \sigma^2) + \sum_{i=1}^\ell h_{\lambda,\mu}([Bx - h]_i) \ {\rm with\ \ }
h_{\lambda,\mu}(s) := \lambda\max\limits_{0 \le t \le 1}  \left\{st - \frac{\mu}{2} t^2\right\},
%k_{\lambda,\mu}(v) := \lambda\inf_u\left\{\frac1{2\mu} \|u - v\|^2 + \|u_+\|\right\},
\end{split}
\]
where the function $h_{\lambda,\mu}(\cdot)$ is a $\mu$-smoothing for the function $s\to \lambda\cdot s_+$; see \cite[Eq.~4]{PinarZe94} and the discussions therein.

It is not hard to show that for all $x \in \R^n$,
\begin{equation}\label{f-prop}
0 \ \le \ f_{\lambda,\mu}(x) \ \le \ \lambda [(\|Ax - b\|^2 - \sigma^2)_+ +  \|(Bx - h)_+\|_1] \ \le \ f_{\lambda,\mu}(x) + \frac{\ell + 1}2\lambda \mu,
\end{equation}
and
\begin{equation} \label{f-deriv}
  \nabla f_{\lambda,\mu}(x) = 2 h'_{\lambda,\mu}(\|Ax - b\|^2 - \sigma^2) A^T(Ax-b) + \sum_{i=1}^\ell h'_{\lambda,\mu}([Bx - h]_i)b_i,
\end{equation}
where $b_i$ is the column vector formed from the $i$th row of $B$, and the function $h'_{\lambda,\mu}$ satisfies
\begin{eqnarray}
& & h'_{\lambda,\mu}(s) = \lambda\min\left\{\max\left\{\frac{s}{\mu},0\right\},1\right\}, \label{h-deriv} \\ [5pt]
%& & \nabla k_{\lambda,\mu}(v) = \begin{cases}
%  \lambda \frac{v_+}{\|v_+\|} & \ {\rm if}\ \|v_+\| > \mu,\\
%  \frac\lambda\mu v_+ & \ {\rm otherwise},
%\end{cases} \label{k-deriv} \\ [5pt]
& & |h'_{\lambda,\mu}(s_1) - h'_{\lambda,\mu}(s_2)| \le \frac{\lambda}{\mu} |s_1-s_2| \quad\quad \forall s_1, \ s_2 \in \R.
\label{h-prop} %\\ [5pt]
%& & \|\nabla k_{\lambda,\mu}(v_1) - \nabla k_{\lambda,\mu}(v_2)\| \le \frac{\lambda}{\mu} \|v_1-v_2\| \quad\quad \forall v_1, \ v_2 \in \R^n.
%\label{k-prop}
\end{eqnarray}
To solve \eqref{eq:P1-mu}, we consider an adaptation of the nonmonotone proximal gradient (NPG) method proposed in
\cite{WrNoFi09}. In \cite{WrNoFi09}, the NPG method was proposed to solve a class of unconstrained problems in the form of
\beq \label{fP}
\min\limits_x f(x) + P(x),
\eeq
where $f$ and $P$ are finite-valued functions in $\R^n$, and moreover, $f$ is differentiable in $\R^n$ and its gradient
is globally Lipschitz continuous in $\R^n$. The convergence analysis for the NPG method conducted in \cite{WrNoFi09} relies
on the global Lipschitz continuity of $\nabla f$.  Though the objective of \eqref{eq:P1-mu} is in the same form as that of
\eqref{fP},  we observe from \eqref{f-deriv} that $\nabla f_{\lambda,\mu}$ is locally but not globally Lipschitz continuous
in $\R^n$.
%$\nabla f_{\lambda,\mu}$ is not globally Lipschitz continuous in $\R^n$. Moreover, the function $P = \Phi+\delta_{S_1}$ in our problem
%is not finite-valued.
It thus appears that the NPG method \cite{WrNoFi09} may not be applicable to our problem \eqref{eq:P1-mu}. We are, however, fortunately
able to show in Appendix~\ref{sec:NPG} that this NPG method is indeed capable of solving a more general class of problems that satisfies
Assumption~\ref{assump-fphi}.  We next verify that Assumption~\ref{assump-fphi} holds for problem \eqref{eq:P1-mu} with
$f=f_{\lambda,\mu}$ and $P=\Phi + \delta_{S_1}$. As a consequence, the NPG method is applicable to our problem \eqref{eq:P1-mu}.
%We are fortunately able to establish in Appendix~\ref{sec:NPG} convergence of the NPG method for a more general class of problems satisfying
%Assumption~\ref{assump-fphi}.  We next show that the NPG method is applicable to solve \eqref{eq:P1-mu} by verifying
%Assumption~\ref{assump-fphi} for $f=f_{\lambda,\mu}$ and $P=\Phi + \delta_{S_1}$, and then discuss convergence of the
%method when applied to solving \eqref{eq:P1-mu}.

First, it is easy to see that Assumption \ref{assump-fphi} (ii) holds.
Let $x^0 \in S_1$ be arbitrarily chosen. It follows from \eqref{f-prop} that
$f_{\lambda,\mu}(x) \ge 0$, which implies that
\begin{equation}\label{bound}
\Omega(x^0):= \left\{x\in S_1: F_{\lambda,\mu}(x)\le F_{\lambda,\mu}(x^0)\right\} \subseteq
\left\{x\in S_1: \Phi(x) \le F_{\lambda,\mu}(x^0)\right\}.
\end{equation}
The set on the right hand side is nonempty and bounded by Assumption~\ref{assum2}, and hence $\Omega(x^0)$ is nonempty and compact. Since $f_{\lambda,\mu} + \Phi$ is a continuous function, it is uniformly continuous and bounded below in $\Omega(x^0)$.
Consequently, Assumption \ref{assump-fphi} (iii) holds. One can also easily verify that Assumption \ref{assump-fphi} (iv) holds using the compactness of $\Omega(x^0)$ and the nonnegativity of $\Phi$.
Finally, it is routine to show that $\nabla f_{\lambda,\mu}$ is locally Lipschitz continuous. This together with the compactness of $\Omega(x^0)$ shows that Assumption~\ref{assump-fphi} (i) also holds.
Therefore, the NPG method can be suitably applied to solving problem \eqref{eq:P1-mu}.

We now establish a convergence result for the NPG method applied to problem \eqref{eq:P1-mu}.

\begin{theorem} \label{converge-P1-mu}
Suppose that Assumption~\ref{assum2} holds. Given any $x^0 \in S_1$, let $\{x^k\}$ be the sequence generated by the NPG method applied to
problem \eqref{eq:P1-mu}. There hold:
\bi
\item[(i)] $\{x^k\}$ is bounded;
\item[(ii)] Any accumulation point $x^*$ of $\{x^k\}$ is a first-order stationary point of problem \eqref{eq:P1-mu},
that is, it satisfies
\beq \label{stat-pt}
0\in \nabla f_{\lambda,\mu}(x^*) + \partial (\Phi + \delta_{S_1})(x^*).
\eeq
\ei
\end{theorem}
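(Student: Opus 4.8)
The plan is to verify that the NPG method applied to problem \eqref{eq:P1-mu} is a special case of the generalized framework whose convergence is established in Appendix~\ref{sec:NPG} (Theorem based on Assumption~\ref{assump-fphi}), and then simply invoke that result. Concretely, I would set $f = f_{\lambda,\mu}$ and $P = \Phi + \delta_{S_1}$, and confirm, as already done in the paragraphs immediately preceding the statement, that each part of Assumption~\ref{assump-fphi} is satisfied: part (ii) is immediate; part (i) follows since $\nabla f_{\lambda,\mu}$ is locally Lipschitz (clear from the formula \eqref{f-deriv} together with \eqref{h-prop}) and $\Omega(x^0)$ is compact; parts (iii) and (iv) follow from compactness of $\Omega(x^0)$ (via \eqref{bound} and Assumption~\ref{assum2}) and nonnegativity of $\Phi$. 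Once these are in place, conclusion (i) of the theorem is exactly the boundedness assertion of the appendix result (the iterates stay in $\Omega(x^0)$, which is bounded), and conclusion (ii) is the stationarity assertion, with the stationarity inclusion \eqref{stat-pt} being the instance of the general first-order condition $0 \in \nabla f(x^*) + \partial P(x^*)$ for our choice of $f$ and $P$.

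More precisely, for step (i), I would point out that the NPG method is a descent-type method in the sense that the reference value used in the line search dominates $F_{\lambda,\mu}(x^k)$, so every iterate lies in $\Omega(x^0)$; since the right-hand side of \eqref{bound} is bounded by Assumption~\ref{assum2}, so is $\Omega(x^0)$, and hence $\{x^k\}$ is bounded. For step (ii), I would take an arbitrary accumulation point $x^*$ of $\{x^k\}$ and apply the general convergence theorem from Appendix~\ref{sec:NPG}: it guarantees that any accumulation point satisfies $0 \in \nabla f(x^*) + \partial P(x^*)$. Substituting $f = f_{\lambda,\mu}$ (so $\nabla f(x^*) = \nabla f_{\lambda,\mu}(x^*)$, which exists by local differentiability) and $P = \Phi + \delta_{S_1}$ yields exactly \eqref{stat-pt}. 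I would also remark that $x^* \in S_1$ because $S_1$ is closed and each $x^k \in S_1$.

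The bulk of the real work has in fact been front-loaded into two places: the verification that Assumption~\ref{assump-fphi} holds for \eqref{eq:P1-mu}, which is carried out in the discussion just before the theorem, and the general convergence analysis of the NPG method under Assumption~\ref{assump-fphi}, which is deferred to Appendix~\ref{sec:NPG}. Given those, the proof of this theorem is essentially a bookkeeping exercise: match notation, cite the appendix, and read off the two conclusions. The main obstacle, such as it is, is purely a matter of making sure the hypotheses line up — in particular that $\nabla f_{\lambda,\mu}$ really is locally (though not globally) Lipschitz, which is why Theorem~\ref{converge-P1-mu} cannot be obtained by directly quoting \cite{WrNoFi09} and instead relies on the strengthened appendix result. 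No genuinely new estimate is needed at this stage of the paper; the proof is a short deduction from previously established facts.

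\begin{proof}
We verify that the NPG method applied to \eqref{eq:P1-mu} with $f = f_{\lambda,\mu}$ and $P = \Phi + \delta_{S_1}$ is covered by the convergence analysis in Appendix~\ref{sec:NPG}. As discussed immediately above, Assumption~\ref{assump-fphi} holds: part (ii) is trivial; for part (i), $\nabla f_{\lambda,\mu}$ is locally Lipschitz continuous by \eqref{f-deriv} and \eqref{h-prop}, and $\Omega(x^0)$ is compact by \eqref{bound} and Assumption~\ref{assum2}; parts (iii) and (iv) follow from the compactness of $\Omega(x^0)$ together with the nonnegativity of $\Phi$.

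(i) Since the NPG method maintains $F_{\lambda,\mu}(x^k) \le F_{\lambda,\mu}(x^0)$ for all $k$, every iterate lies in $\Omega(x^0)$. By \eqref{bound} and Assumption~\ref{assum2}, $\Omega(x^0)$ is bounded, hence $\{x^k\}$ is bounded.

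(ii) Let $x^*$ be any accumulation point of $\{x^k\}$. Since $S_1$ is closed and $x^k \in S_1$ for all $k$, we have $x^* \in S_1$. By the convergence result established in Appendix~\ref{sec:NPG} under Assumption~\ref{assump-fphi}, any accumulation point of $\{x^k\}$ satisfies $0 \in \nabla f(x^*) + \partial P(x^*)$. Substituting $f = f_{\lambda,\mu}$ and $P = \Phi + \delta_{S_1}$, and noting that $\nabla f_{\lambda,\mu}(x^*)$ exists, we obtain \eqref{stat-pt}.
\end{proof}
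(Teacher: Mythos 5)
Your part (i) matches the paper exactly: iterates remain in the level set $\Omega(x^0)$ by Proposition~\ref{inner-convergence}~(i), and that set is bounded via \eqref{bound} and Assumption~\ref{assum2}. The verification of Assumption~\ref{assump-fphi} is also fine (and is indeed carried out in the text preceding the theorem, so it need not be repeated).

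The problem is in part (ii): you cite ``the convergence result established in Appendix~\ref{sec:NPG}'' as if the appendix contained a theorem asserting that every accumulation point satisfies $0 \in \nabla f(x^*) + \partial P(x^*)$. It does not. The appendix proves only two things: Proposition~\ref{inner-convergence} (well-definedness, the bound $\bar L_k \le \bar L$, and the inner-iteration count) and Theorem~\ref{outer-convergence} (that $\|x^{k+1}-x^k\| \to 0$). The passage from these facts to stationarity of accumulation points is precisely the content of part (ii) of Theorem~\ref{converge-P1-mu}, and your proposal omits it. What is needed is the following: the optimality condition of the proximal subproblem \eqref{prox-subprob} gives, via \cite[Theorem~10.1]{Rock98} and \cite[Exercise~10.10]{Rock98},
\[
0\in \nabla f_{\lambda,\mu}(x^k) + \bar L_k (x^{k+1}-x^k) + \partial (\Phi + \delta_{S_1})(x^{k+1});
\]
then, along a subsequence $\{x^k\}_{\cK}\to x^*$, one uses the continuity of $\nabla f_{\lambda,\mu}$, the bound $\bar L_k \le \bar L$ together with $\|x^{k+1}-x^k\|\to 0$ (so the middle term vanishes and $x^{k+1}\to x^*$ along $\cK$ as well), the continuity of $\Phi$ and closedness of $S_1$ (so that $(\Phi+\delta_{S_1})(x^{k+1})\to(\Phi+\delta_{S_1})(x^*)$, which is required for the limiting-subdifferential outer semicontinuity), and finally \eqref{outersemi} to conclude \eqref{stat-pt}. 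Without this limit-passing argument the proof of (ii) is not complete; the rest of your write-up is correct.
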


\begin{proof}
(i) It follows from \eqref{bound} and Proposition \ref{inner-convergence} (i) with
$f=f_{\lambda,\mu}$ and $P=\Phi + \delta_{S_1}$ that
\[
\{x^k\} \subseteq \{x\in S_1:\; F_{\lambda,\mu}(x) \le F_{\lambda,\mu}(x^0)\} \subseteq \{x\in S_1:\; \Phi(x) \le F_{\lambda,\mu}(x^0)\}
\]
and hence $\{x^k\}$ is bounded.

(ii) In view of Proposition
\ref{inner-convergence} (ii), $\bar L_k \le \tilde L$ for some $\tilde L>0$ and all $k \ge 0$. It follows from
\eqref{prox-subprob} with $f=f_{\lambda,\mu}$ and $P=\Phi + \delta_{S_1}$, together with \cite[Theorem~10.1]{Rock98} and \cite[Exercise~10.10]{Rock98} that
\[
0\in \nabla f_{\lambda,\mu}(x^k) + \bar L_k (x^{k+1}-x^k) + \partial (\Phi + \delta_{S_1})(x^{k+1}).
\]
Suppose that $x^*$ is an accumulation point of $\{x^k\}$. Then there exists a subsequence $\cK$ such that
$\{x^k\}_{\cK} \to x^*$. Upon taking limits as $k\in \cK \to \infty$ on both sides of the above inclusion and
using Theorem \ref{outer-convergence} and \eqref{outersemi}, we see that \eqref{stat-pt} holds.
\end{proof}

We are now ready to present a penalty method for solving problem \eqref{eq:P0}.

\gap

\noindent
%\begin{minipage}[h]{6.6 in}
{\bf Penalty method for problem \eqref{eq:P0}:} \\ [5pt]
 Let $\xfeas$ be an arbitrary feasible point of problem \eqref{eq:P0}. Choose $x^0\in S_1$,
$\lambda_0>0$, $\mu_0>0$, $\epsilon_0>0$, $\rho>1$ and $\theta \in (0,1)$ arbitrarily.
Set $k=0$ and $x^{0,0}=x^0 \in S_1$.
\begin{itemize}
\item[1)] If $F_{\lambda_k,\mu_k}(x^{k,0}) > F_{\lambda_k,\mu_k}(\xfeas)$, set $x^{k,0}=\xfeas$. Apply the NPG
method with $x^{k,0}$ as the initial point to find an approximate stationary point $x^k$ to problem \eqref{eq:P1-mu} with
$\lambda = \lambda_k$ and $\mu=\mu_k$ satisfying
\beq \label{inner-cond}
{\rm dist}(0,\nabla f_{\lambda_k,\mu_k}(x^k) + \partial (\Phi + \delta_{S_1})(x^k)) \le \ \epsilon_k.
\eeq
\item[2)]
Set $\lambda_{k+1} = \rho \lambda_k$, $\mu_{k+1} = \theta \mu_k$, $\eps_{k+1}=\theta \eps_k$ and
$x^{k+1,0} = x^k$.
\item[3)]
Set $k \leftarrow k+1$ and go to step 1).
\end{itemize}
\noindent
{\bf end}
%\end{minipage}

\vgap

\begin{remark}
By virtue of Theorem \ref{converge-P1-mu}, an $x^k$ satisfying \eqref{inner-cond} can be found by the NPG
method within a finite number of iterations. Therefore, the sequence $\{x^k\}$ is well defined.
\end{remark}

\gap

Convergence results for the above penalty method for solving problem \eqref{eq:P0} are presented in the next theorem. The arguments in the proof are standard and similar to the standard convergence analysis of the classical penalty methods, except that we make use of (i) the feasible point $\xfeas$ to guarantee that any limit point is feasible for \eqref{eq:P0}; (ii) the constraint qualification \eqref{CQ0} to guarantee the boundedness of ``Lagrange multipliers". For completeness, we include the proof in Appendix~\ref{appendix:B}.

\begin{theorem} \label{pm-convergence}
Suppose that Assumption~\ref{assum2} holds. Let $\{x^k\}$ be generated by the above penalty method for solving problem \eqref{eq:P0}. There hold:
\bi
\item[(i)] $\{x^k\}$ is bounded;
\item[(ii)] Any accumulation point $x^*$ of $\{x^k\}$ is a feasible point of problem \eqref{eq:P0}.
\item[(iii)] Suppose that $\{x^k\}_\cK \to x^*$ for some subsequence $\cK$ and that the constraint qualification \eqref{CQ0} holds at $x^*$.
Then $x^*$ is a KKT
point of problem \eqref{eq:P0}.
\ei
\end{theorem}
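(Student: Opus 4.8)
The plan is to run the classical penalty-method convergence argument, using the feasible anchor $\xfeas$ to pin down feasibility of limit points and the constraint qualification \eqref{CQ0} to control the ``Lagrange multipliers''.

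\emph{Parts (i) and (ii).} The key estimate I would establish first is that $F_{\lambda_k,\mu_k}(x^k)\le F_{\lambda_k,\mu_k}(\xfeas)$ for every $k$: after the reset in step 1), the NPG run is started at a point $x^{k,0}$ with $F_{\lambda_k,\mu_k}(x^{k,0})\le F_{\lambda_k,\mu_k}(\xfeas)$, and by Proposition~\ref{inner-convergence}(i) the NPG iterates (hence $x^k$) never leave the level set of their starting point. Since $\xfeas\in S$, \eqref{f-prop} forces $f_{\lambda_k,\mu_k}(\xfeas)=0$, so $F_{\lambda_k,\mu_k}(\xfeas)=\Phi(\xfeas)$; combined with $f_{\lambda_k,\mu_k}\ge 0$ this gives $\Phi(x^k)\le F_{\lambda_k,\mu_k}(x^k)\le\Phi(\xfeas)$, i.e. $\{x^k\}\subseteq\{x\in S_1:\ \Phi(x)\le\Phi(\xfeas)\}$, which is bounded by Assumption~\ref{assum2}; this proves (i). For (ii), the same estimate together with the right-hand bound in \eqref{f-prop} yields $(\|Ax^k-b\|^2-\sigma^2)_++\|(Bx^k-h)_+\|_1\le \Phi(\xfeas)/\lambda_k+\tfrac{\ell+1}{2}\mu_k\to 0$ because $\lambda_k\to\infty$ and $\mu_k\to 0$; passing to the limit along $\cK$ using continuity and closedness of $S_1$ then gives $x^*\in S_1\cap S_2=S$.

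\emph{Part (iii), the main line.} Since $x^*\in S$ by (ii), it remains to verify \eqref{P01stoptimality}. From closedness of the limiting subdifferential and \eqref{inner-cond}, I pick $\xi^k\in\partial(\Phi+\delta_{S_1})(x^k)$ with $\|\nabla f_{\lambda_k,\mu_k}(x^k)+\xi^k\|\le\epsilon_k\to 0$, and use \eqref{f-deriv} to write $\nabla f_{\lambda_k,\mu_k}(x^k)=2\nu_k A^T(Ax^k-b)+B^T\eta_k$ with $\nu_k\ge 0$ and $\eta_k\ge 0$ (entrywise). By \eqref{h-deriv}, $\nu_k=0$ unless $\|Ax^k-b\|>\sigma$ and $\eta_k^i=0$ unless $[Bx^k-h]_i>0$; since $x^k\to x^*\in S$, this gives the complementarity: for $k\in\cK$ large, $B^T\eta_k\in\cN_{\{Bx\le h\}}(x^*)$, and any limit point of $2\nu_kA^T(Ax^k-b)$ lies in $\cN_{\{\|A\cdot-b\|\le\sigma\}}(x^*)$ (which is $\{0\}$ if $\|Ax^*-b\|<\sigma$ and $\{cA^T(Ax^*-b):c\ge 0\}$ if $\|Ax^*-b\|=\sigma$). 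If $\{\nu_k\}_\cK$ and $\{\xi^k\}_\cK$ are bounded, then so is $\{B^T\eta_k\}_\cK$, and along a further subsequence $2\nu_kA^T(Ax^k-b)\to\alpha^*$, $B^T\eta_k\to\beta^*$, $\xi^k\to\xi^*$ with $\alpha^*\in\cN_{\{\|A\cdot-b\|\le\sigma\}}(x^*)$, $\beta^*\in\cN_{\{Bx\le h\}}(x^*)$ (closed polyhedral cone), and $\xi^*\in\partial(\Phi+\delta_{S_1})(x^*)$ by \eqref{outersemi} (here $\Phi(x^k)\to\Phi(x^*)$ since $x^k,x^*\in S_1$ and $\Phi$ is continuous); then $\alpha^*+\beta^*+\xi^*=0$, and the decomposition $\cN_{S_2}(x^*)=\cN_{\{\|A\cdot-b\|\le\sigma\}}(x^*)+\cN_{\{Bx\le h\}}(x^*)$ (valid by the blanket strict-feasibility assumption on $x_0$) gives $0\in\cN_{S_2}(x^*)+\partial(\Phi+\delta_{S_1})(x^*)$, i.e. $x^*$ is a KKT point.

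\emph{The main obstacle} is to prove boundedness of $\{\nu_k\}_\cK$ and $\{\xi^k\}_\cK$, and this is exactly where \eqref{CQ0} and the point $x_0$ with $\|Ax_0-b\|<\sigma$ are used. I would argue by contradiction: if one of these sequences is unbounded, divide the identity $\epsilon_k\ge\|2\nu_kA^T(Ax^k-b)+B^T\eta_k+\xi^k\|$ by the largest of $\nu_k\|A^T(Ax^k-b)\|$, $\|B^T\eta_k\|$ and $\|\xi^k\|$, and extract a unit-norm limiting direction. The normalized $\xi^k$ then tends to some $-v$, so by the horizon (second) line of \eqref{outersemi} we obtain $-v\in\partial^{^\infty}(\Phi+\delta_{S_1})(x^*)$, while the normalized $2\nu_kA^T(Ax^k-b)$ and $B^T\eta_k$ tend into $\cN_{\{\|A\cdot-b\|\le\sigma\}}(x^*)$ and $\cN_{\{Bx\le h\}}(x^*)$ respectively (using the complementarity above and closedness); summing, $v\in\cN_{S_2}(x^*)\cap(-\partial^{^\infty}(\Phi+\delta_{S_1})(x^*))=\{0\}$ by \eqref{CQ0}, contradicting $\|v\|=1$. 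The one delicate case is when the blow-up comes from $\nu_k$: then $\|Ax^k-b\|>\sigma$ eventually, so $\|Ax^*-b\|=\sigma$ and $A^T(Ax^*-b)\ne 0$ ($A$ has full row rank), and the normalized $\nu_k$-term converges to $\pm A^T(Ax^*-b)/\|A^T(Ax^*-b)\|$; to force this into the above contradiction I would invoke $\langle A^T(Ax^*-b),x_0-x^*\rangle<0$, which follows from Cauchy--Schwarz and $\|Ax_0-b\|<\sigma$ and rules out $-A^T(Ax^*-b)/\|A^T(Ax^*-b)\|$ lying in $\cN_{\{Bx\le h\}}(x^*)$. I expect the careful separation of the ellipsoid multiplier $\nu_k$ from the polyhedral multiplier $\eta_k$, so that the respective normalized limits land in the correct summand of $\cN_{S_2}(x^*)$, to be the fussiest part of the write-up.
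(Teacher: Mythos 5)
Your proposal is correct and follows essentially the same route as the paper's proof in Appendix~\ref{appendix:B}: the level-set bound $F_{\lambda_k,\mu_k}(x^k)\le F_{\lambda_k,\mu_k}(\xfeas)=\Phi(\xfeas)$ for (i)--(ii), and for (iii) a contradiction argument in which the horizon inclusion from \eqref{outersemi} together with \eqref{CQ0} controls $\xi^k$ while the Slater point $x_0$ rules out $0\in A^T(Ax^*-b)+\cN_{\{Bx\le h\}}(x^*)$ when the ellipsoid multiplier blows up. The only difference is organizational: you normalize once by the maximum of the three magnitudes, whereas the paper splits into the cases $\|Ax^*-b\|<\sigma$ and $\|Ax^*-b\|=\sigma$ and establishes boundedness of $\{\xi^k\}$ and of $t_k=2h'_{\lambda_k,\mu_k}(\|Ax^k-b\|^2-\sigma^2)$ in two nested steps — the ingredients and their roles are identical.
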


%\begin{remark}
%As a consequence of Theorem \ref{pm-convergence}, the above penalty method can find a point $x^\eps$
%within a finite number of iterations that satisfies
%\[
%\ba{l}
%\left|\|Ax^\eps-b\|^2-\sigma^2\right| \le \eps,    \\ [5pt]
%{\rm dist}(0,\pi_\eps \nabla_x (\|Ax^\eps-b\|^2-\sigma^2) + \partial\Phi(x^\eps)) \le \eps,
%\ea
%\]
%for some $\pi_\eps \ge 0$.
%\end{remark}

\section{Numerical simulations}\label{sec6}

In this section, we consider the problem of recovering a sparse solution of an underdetermined linear system from noisy measurements. In the literature, this is typically done via solving \eqref{eq:P00} or \eqref{eq:P-1} with a specific sparsity inducing function $\Phi$, e.g., the $\ell_1$ norm or the $\ell_{1/2}$ quasi-norm; see, for example, \cite{BergMPF08,BDE,CGWY,CXY} and references therein. Here, we propose using the model \eqref{eq:P00} (a special case of \eqref{eq:P0} with $S_1 = \R^n$ and $B$ being vacuous) with $\Phi(x) = \sum_{i=1}^n|x_i|^p$, $p = 1/2$. We solve this problem using our penalty method proposed in Subsection~\ref{sec:subalg}, which involves solving a sequence of subproblems in the form of \eqref{eq:P1}.
%in order to find a sparse vector in $S$.
We benchmark our method against two other approaches:
\begin{enumerate}
%  \item solving \eqref{eq:P00} with $\Phi(x) = \|x\|_1$ via the solver SPGL1 \cite{BergMPF08} (Version 1.8);
%  \item solving the quadratic penalty \eqref{eq:P-1} with $\Phi(x) = \sum_{i=1}^n|x_i|^{\frac12}$ for a suitable $\lambda > 0$.
    \item the solver SPGL1 \cite{BergMPF08} (Version 1.8) that solves \eqref{eq:P00} with $\Phi(x) = \|x\|_1$;
  \item the quadratic penalty method that solves \eqref{eq:P-1} with $\Phi(x) = \sum_{i=1}^n|x_i|^{1/2}$ and some suitable $\lambda > 0$.
\end{enumerate}
All codes are written in MATLAB,
and the experiments were performed in MATLAB version R2014a on a cluster with 32 processors (2.9 GHz each) and 252G RAM.

%For our penalty method for solving \eqref{eq:P00} using \eqref{eq:P1},
For our penalty method, we set $x^0 = e$, the vector of all ones, $\lambda_0=\mu_0=\epsilon_0=1$, $\rho = 2$ and $\theta = 1/\rho$. We also set $\xfeas = A^\dagger b$, which we take as an input to the algorithm and does not count this computation in our CPU time below. For the NPG method for solving the unconstrained subproblem \eqref{eq:P1-mu} at $\lambda = \lambda_k$ and $\mu = \mu_k$, we set $L_{\min} = 1$, $L_{\max} = 10^8$, $\tau = 2$, $c = 10^{-4}$, $M = 4$, $L^0_0=1$ and, for any $l\ge 1$,
\[
L_l^0 := \min\left\{\max\left\{\frac{[x^{k,l} - x^{k,l-1}]^T[\nabla f_{\lambda_k,\mu_k}(x^{k,l}) - \nabla f_{\lambda_k,\mu_k}(x^{k,l-1})]}{\|x^{k,l} - x^{k,l-1}\|^2},L_{\min}\right\},L_{\max}\right\}.
\]
The NPG method is terminated (at the $l$th inner iteration) when
\begin{equation*}
  \|{\rm Diag}(x^{k,l})\nabla f_{\lambda_k,\mu_k}(x^{k,l}) + p|x^{k,l}|^p\|_\infty\le \sqrt{\epsilon_k}\ \ {\rm and}\ \
  \frac{|F_{\lambda_k,\mu_k}(x^{k,l}) - F_{\lambda_k,\mu_k}(x^{k,l-1})|}{\max\{1,|F_{\lambda_k,\mu_k}(x^{k,l})|\}}\le \min\{\epsilon_k^2,10^{-4}\}.
\end{equation*}
Note that the first condition above means the first-order optimality condition \eqref{Lp1storder} is approximately satisfied. The penalty method itself is terminated when
\[
\max\left\{(\|Ax^k - b\|^2 - \sigma^2)_+, 0.01\epsilon_k\right\} \le 10^{-6},
\]
with the $\epsilon_{k+1}$ in step 2) of the penalty method updated as $\max\{\theta\epsilon_k,10^{-6}\}$ (instead of $\theta\epsilon_k$) in our implementation.

%For SPGL1 solving \eqref{eq:P00} with $\Phi(x) = \|x\|_1$, we use the default settings.
For the aforementioned SPGL1 \cite{BergMPF08}, we use the default settings.
%For the quadratic penalty model \eqref{eq:P-1}, as discussed in our Example~\ref{example1}, there may be no $\lambda > 0$ so that the local minimizers of \eqref{eq:P-1} are related to those of \eqref{eq:P00}. However, intuitively, as $\lambda$ increases, the corresponding approximate solution $\tilde x$ of \eqref{eq:P-1} should have a smaller residual $\|A\tilde x - b\|$. Thus,
%for a fairer comparison with our penalty method which gives a feasible solution of \eqref{eq:P00} in the limit, we solve a sequence of problem \eqref{eq:P-1} along an increasing sequence of $\lambda$, and terminate when the approximate solution is approximately feasible for \eqref{eq:P00}. Specifically, we apply the same scheme described in our penalty method but with $H_\lambda$ in place of $F_{\lambda,\mu}$ and $\lambda \|Ax - b\|^2$ in place of $f_{\lambda,\mu}$, and we use exactly the same parameter settings as above except that we never reset our initial point to $\xfeas$ in step 1 of the penalty method.
For the quadratic penalty model \eqref{eq:P-1}, as discussed in our Example~\ref{example1}, there may be no $\lambda > 0$ so that the local minimizers of \eqref{eq:P-1} are closely related to
those of \eqref{eq:P00}. However, one can observe as $\lambda$ increases from $0$ to $\infty$, the residual $\|A\tilde x(\lambda) - b\|$ changes from $\|b\|$ to $0$, where
$\tilde x(\lambda)$ is an optimal solution of \eqref{eq:P-1}. Thus, a possibly best approximate solution to \eqref{eq:P0} offered by model \eqref{eq:P-1} appears to be the one corresponding to
the least $\lambda$ such that $\|A\tilde x(\lambda) - b\| \le \sigma$. However, such a $\lambda$ is typically unknown. Instead, we solve a sequence of problem \eqref{eq:P-1} along an increasing sequence of $\lambda$, and terminate when the approximate solution is approximately feasible for \eqref{eq:P00}. Specifically, we apply the same scheme described in our penalty method but with $H_\lambda$ in place of $F_{\lambda,\mu}$ and $\lambda \|Ax - b\|^2$ in place of $f_{\lambda,\mu}$, and we use exactly the same parameter settings as above. For ease of reference, we
call this approach and our proposed penalty method as ``Inexact Penalty'' and ``Exact Penalty'' methods, respectively.

We consider randomly generated instances. First, we generate a matrix $\tilde A\in \R^{K\times N}$ with i.i.d. standard Gaussian entries.
The matrix $A$ is then constructed so that its rows form an orthonormal basis for the row space of $\tilde A$.
Next, we generate a vector $v\in \R^T$ with i.i.d. standard Gaussian entries. We choose an index set $I$ of size $T$ at random
and define a vector $\hat x\in \R^N$ by setting $\hat x_I = v$ and $\hat x_{\bar I} = 0$.
The measurement $b$ is then set to be $A\hat x + \delta \xi$ for some $\delta > 0$, with each entry of $\xi$ following again the standard Gaussian distribution.
Finally, we set $\sigma = \delta\|\xi\|$ so that the resulting feasible set will contain the sparse vector $\hat x$.\footnote{In our simulations, all random instances satisfy $\|b\| > \sigma$, which implies that the origin is excluded from the feasible region of the problem.}

In our tests below, we set $(K,N,T) = (120i,512i,20i)$ for each $i = 12,14,...,30$ and generate $10$ random instances for each such $(K,N,T)$ as described above. The computational results reported are averaged over the $10$ instances, and they are reported in Tables~\ref{t1}, \ref{t2} and \ref{t3}, which present results for $\delta = 10^{-2}$, $5\times 10^{-3}$ and $10^{-3}$, respectively. For all three methods, we report the number of nonzero entries ({\bf nnz}) in the approximate solution $x$ obtained, computed using the MATLAB function $\sf nnz$, the recovery error ({\bf err}) $\|x - \hat x\|$, and the CPU time in seconds. We also report the function value $\Phi(x)$ at termination ({\bf fval}) for the penalty methods.
% and the $\lambda_k$ at termination of our proposed exact penalty method.
One can observe from these tables that our penalty method usually produces sparser solutions with smaller recovery errors than
the other two approaches though it is in general slower than SPGL1. Moreover, in contrast with the method ``Inexact Penalty'', our penalty
method achieves smaller objective values. % while reaching similar feasibility level that is omitted from the tables due to space limitation.; TK: could be misleading...
These phenomena indeed reflect the intrinsic advantage of our approach.
%Moreover, while their CPU times are comparable for small $\delta$, the penalty method using \eqref{eq:P1} is faster for large $\delta$.
%Moreover, the penalty method is significantly slower in those cases where it fails to identify sparse solutions (i.e., when ${\bf nnz} > T$).

\begin{table}[h!]
%\tiny
%\scriptsize
%\footnotesize
%\small
\caption{Comparing the penalty method and SPGL1, $\delta = 10^{-2}$}\label{t1} \normalsize
%\begin{center}
%\vspace*{.2cm}
{\hspace*{-1.6cm}\footnotesize
\begin{tabular}{|c|c|c||c|c|c||c|c|c|c||c|c|c|c|c|}  \hline
\multicolumn{3}{|c||}{Data} & \multicolumn{3}{c||}{SPGL1} & \multicolumn{4}{c||}{Inexact Penalty} & \multicolumn{4}{c|}{Exact Penalty}
\\
$K$ & $N$ & $T$ & ${\bf nnz}$ & {\bf err} & CPU & ${\bf fval}$ & ${\bf nnz}$ & {\bf err} & CPU & ${\bf fval}$ & ${\bf nnz}$ & {\bf err} & CPU 
\\ \hline
          1440 &          6144 &           240 &       719 & 1.2e+00 &       0.69 &2.89e+02 &        859 & 9.2e-01 &      15.27 &1.90e+02 &        219 & 5.1e-01 &       5.08 \\
          1680 &          7168 &           280 &       837 & 1.3e+00 &       0.80 &3.38e+02 &        998 & 1.0e+00 &      17.44 &2.23e+02 &        257 & 5.5e-01 &       5.79 \\
          1920 &          8192 &           320 &       943 & 1.4e+00 &       1.06 &3.87e+02 &       1139 & 1.1e+00 &      23.85 &2.57e+02 &        294 & 5.7e-01 &       7.37\\
          2160 &          9216 &           360 &      1050 & 1.5e+00 &       1.27 &4.35e+02 &       1290 & 1.1e+00 &      28.91 &2.87e+02 &        330 & 6.1e-01 &      10.37 \\
          2400 &         10240 &           400 &      1188 & 1.6e+00 &       1.53 &4.82e+02 &       1430 & 1.2e+00 &      34.38 &3.17e+02 &        366 & 6.6e-01 &      11.80 \\
          2640 &         11264 &           440 &      1266 & 1.6e+00 &       1.87 &5.31e+02 &       1568 & 1.3e+00 &      43.91 &3.49e+02 &        402 & 6.7e-01 &      13.98 \\
          2880 &         12288 &           480 &      1404 & 1.7e+00 &       2.20 &5.78e+02 &       1712 & 1.3e+00 &      51.89 &3.81e+02 &        439 & 7.0e-01 &      20.21 \\
          3120 &         13312 &           520 &      1500 & 1.7e+00 &       2.79 &6.28e+02 &       1849 & 1.4e+00 &      64.28 &4.15e+02 &        474 & 7.4e-01 &      21.67 \\
          3360 &         14336 &           560 &      1656 & 1.8e+00 &       2.92 &6.75e+02 &       2000 & 1.4e+00 &      64.65 &4.46e+02 &        514 & 7.7e-01 &      24.77 \\
          3600 &         15360 &           600 &      1755 & 1.9e+00 &       3.28 &7.24e+02 &       2137 & 1.5e+00 &      75.72 &4.78e+02 &        546 & 7.9e-01 &      25.12 \\ \hline
\end{tabular}
}
%\end{center}
%\vskip -.2in
\end{table}

\begin{table}[h!]
%\tiny
%\scriptsize
%\footnotesize
\small
\caption{Comparing the penalty method and SPGL1, $\delta = 5\times 10^{-3}$}\label{t2} \normalsize
%\vspace*{.2cm}
{\hspace{-1.6cm}\footnotesize
\begin{tabular}{|c|c|c||c|c|c||c|c|c|c||c|c|c|c|c|}  \hline
\multicolumn{3}{|c||}{Data} & \multicolumn{3}{c||}{SPGL1} & \multicolumn{4}{c||}{Inexact Penalty} & \multicolumn{4}{c|}{Exact Penalty}
\\
$K$ & $N$ & $T$ & ${\bf nnz}$ & {\bf err} & CPU & ${\bf fval}$ & ${\bf nnz}$ & {\bf err} & CPU & ${\bf fval}$ & ${\bf nnz}$ & {\bf err} & CPU 
\\ \hline
          1440 &          6144 &           240 &       727 & 6.1e-01 &       0.78 &2.54e+02 &        738 & 4.4e-01 &      10.40 &1.94e+02 &        228 & 2.5e-01 &       4.68  \\
          1680 &          7168 &           280 &       827 & 6.7e-01 &       0.97 &2.94e+02 &        865 & 4.9e-01 &      13.20 &2.23e+02 &        266 & 2.7e-01 &       5.67  \\
          1920 &          8192 &           320 &       960 & 7.2e-01 &       1.31 &3.39e+02 &        988 & 5.3e-01 &      18.56 &2.57e+02 &        304 & 2.9e-01 &       7.93 \\
          2160 &          9216 &           360 &      1068 & 7.5e-01 &       1.58 &3.83e+02 &       1104 & 5.5e-01 &      23.95 &2.92e+02 &        342 & 3.0e-01 &      11.55 \\
          2400 &         10240 &           400 &      1195 & 7.9e-01 &       1.89 &4.28e+02 &       1230 & 5.8e-01 &      29.73 &3.26e+02 &        378 & 3.2e-01 &      11.47  \\
          2640 &         11264 &           440 &      1320 & 8.4e-01 &       2.35 &4.66e+02 &       1352 & 6.1e-01 &      35.31 &3.54e+02 &        416 & 3.5e-01 &      15.63  \\
          2880 &         12288 &           480 &      1422 & 8.7e-01 &       2.78 &5.10e+02 &       1472 & 6.4e-01 &      40.89 &3.88e+02 &        455 & 3.6e-01 &      16.76  \\
          3120 &         13312 &           520 &      1580 & 9.3e-01 &       3.23 &5.54e+02 &       1600 & 6.7e-01 &      46.70 &4.22e+02 &        496 & 3.7e-01 &      20.15  \\
          3360 &         14336 &           560 &      1668 & 9.5e-01 &       3.43 &5.94e+02 &       1715 & 6.9e-01 &      52.10 &4.53e+02 &        530 & 3.8e-01 &      24.81  \\
          3600 &         15360 &           600 &      1794 & 9.8e-01 &       3.89 &6.40e+02 &       1841 & 7.2e-01 &      54.26 &4.87e+02 &        570 & 3.9e-01 &      26.36  \\
\hline
\end{tabular}
}
%\vskip -.2in
\end{table}

\begin{table}[h!]
%\tiny
%\scriptsize
%\footnotesize
\small
\caption{Comparing the penalty method and SPGL1, $\delta = 10^{-3}$}\label{t3} \normalsize
%\vspace*{.2cm}
{\hspace{-1.6cm}\footnotesize
\begin{tabular}{|c|c|c||c|c|c||c|c|c|c||c|c|c|c|c|}  \hline
\multicolumn{3}{|c||}{Data} & \multicolumn{3}{c||}{SPGL1} & \multicolumn{4}{c||}{Inexact Penalty} & \multicolumn{4}{c|}{Exact Penalty}
\\
$K$ & $N$ & $T$ & ${\bf nnz}$ & {\bf err} & CPU & ${\bf fval}$ & ${\bf nnz}$ & {\bf err} & CPU & ${\bf fval}$ & ${\bf nnz}$ & {\bf err} & CPU 
\\ \hline
          1440 &          6144 &           240 &       743 & 1.3e-01 &       1.24 &2.02e+02 &        345 & 6.1e-02 &       5.63 &1.95e+02 &        236 & 4.9e-02 &       6.49 \\
          1680 &          7168 &           280 &       880 & 1.4e-01 &       1.47 &2.38e+02 &        396 & 6.5e-02 &       6.35 &2.30e+02 &        275 & 5.5e-02 &       6.75 \\
          1920 &          8192 &           320 &       995 & 1.4e-01 &       1.93 &2.74e+02 &        460 & 7.0e-02 &       8.21 &2.64e+02 &        315 & 5.8e-02 &       8.84 \\
          2160 &          9216 &           360 &      1120 & 1.5e-01 &       2.08 &3.08e+02 &        511 & 7.3e-02 &       9.36 &2.97e+02 &        354 & 6.1e-02 &      11.23 \\
          2400 &         10240 &           400 &      1232 & 1.6e-01 &       2.59 &3.41e+02 &        573 & 7.9e-02 &      11.51 &3.28e+02 &        393 & 6.4e-02 &      13.60 \\
          2640 &         11264 &           440 &      1410 & 1.7e-01 &       2.96 &3.73e+02 &        631 & 8.3e-02 &      13.78 &3.59e+02 &        431 & 6.8e-02 &      17.26\\
          2880 &         12288 &           480 &      1476 & 1.7e-01 &       3.71 &4.08e+02 &        687 & 8.6e-02 &      15.82 &3.93e+02 &        472 & 7.0e-02 &      18.00 \\
          3120 &         13312 &           520 &      1613 & 1.9e-01 &       4.13 &4.42e+02 &        742 & 9.0e-02 &      18.29 &4.26e+02 &        511 & 7.5e-02 &      23.66 \\
          3360 &         14336 &           560 &      1720 & 1.9e-01 &       4.81 &4.78e+02 &        803 & 9.4e-02 &      21.97 &4.61e+02 &        551 & 7.7e-02 &      28.99 \\
          3600 &         15360 &           600 &      1857 & 2.0e-01 &       5.17 &5.07e+02 &        863 & 9.8e-02 &      24.26 &4.87e+02 &        591 & 7.9e-02 &      27.44 \\
\hline
\end{tabular}
}
%\vskip -.2in
\end{table}

\section{Concluding remarks}\label{sec7}

Optimization models in finding sparse solutions to underdetermined systems of linear equations have stimulated development in signal processing and image sciences.
The constrained optimization model \eqref{eq:P00} and regularization model \eqref{eq:P-1} have been widely used in this context when the data has noise.
The existence of a regularization parameter $\lambda$ such that problems \eqref{eq:P00} and \eqref{eq:P-1} have a common global minimizer is known if
the function $\Phi$ is convex.  However,  when $\Phi$ is nonconvex, such a $\lambda$ does not always exist, as shown in Example~\ref{example1}.
In this paper, we proposed a new penalty model \eqref{eq:P1} for the more general problem \eqref{eq:P0} where $\Phi$ can be nonconvex nonsmooth, perhaps even non-Lipschitz.
We studied the existence of exact penalty parameters for \eqref{eq:P0} regarding local minimizers, stationary points and $\epsilon$-minimizers.
Moreover, we proposed a new penalty method which solves the constrained problem \eqref{eq:P0} by solving a sequence of \eqref{eq:P1} via the proximal gradient algorithm,
with an update scheme for the penalty
parameters. We also proved the convergence of the penalty method to a KKT point of \eqref{eq:P0}.
Preliminary numerical results showed that our penalty method is efficient for finding sparse solutions to underdetermined systems.

%To our knowledge, there is little theory and algorithms for penalty methods for constrained optimization with nonconvex non-Lipschitz objectives.
%This paper presents novel theoretical results and algorithms for penalty methods for non-Lipschitz optimization.

\appendix

\section{Convergence of a nonmonotone proximal gradient method}\label{sec:NPG}

In this appendix, we consider an algorithm for solving the following optimization problem
\beq \label{f+phi}
\min\limits_x F(x) := f(x) + P(x),
\eeq
where $f$ and $P$ satisfy the following assumptions:
\begin{assumption} \label{assump-fphi}
\bi
%\item[(i)] $f$ is a continuous function in $\R^n$; $P$ is a proper lower semicontinuous function that is continuous in its domain, and satisfies $\inf\limits_x P(x)\ge \phi_*$ for some $\phi_*\in \R$;
%\item[(ii)] There exist some $x^0 \in \R^n$ and $r_1 >0$ such that
%\[
%\{x:\; F(x) \le F(x^0)\} =: \cS(x^0) \subseteq \cB(0; r_1).
%\]
%%where $\cB(a; r) := \{x \in \R^n:\;\|x-a\| \le r \}$ for any $a\in \R^n$ and $r>0$.
%\item[(iii)] $f$ is Lipschitz continuously differentiable on $\cB(0;r_2)$
%for some $r_2 > r_1$, that is, there exists an $L_f >0$ such that
%\beq \label{lipschitz}
%\|\nabla f(x) -\nabla f(y)\| \le L_f \|x-y\| \quad \quad \forall x,y \in \cB(0; r_2).
%\eeq
\item[(i)] $f$ is continuously differentiable in $\cU(x^0;\Delta)$ for some
$x^0\in {\rm dom}\,P:= \{x: P(x) < \infty\}$ and $\Delta>0$, and moreover, there exists some $L_f>0$ such that
\beq \label{lipschitz}
\|\nabla f(x)-\nabla f(y)\| \le L_f \|x-y\|, \quad\quad \forall x,y \in \cU(x^0;\Delta),
\eeq
where
\beqas
 \cU(x^0,\Delta)  &:= & \left\{x: \|x-z\| \le \Delta \ \mbox{for some} \ z \in \cS(x^0) \right\}, \\ [6pt]
 \cS(x^0)  &:=&  \left\{x\in\R^n: \ F(x) \le F(x^0)\right\}.
\eeqas
\item[(ii)] $P$ is a proper lower semicontinuous function in $\R^n$.
\item[(iii)] $F$ is bounded below and uniformly continuous in $\cS(x^0)$.
\item[(iv)] The quantities $A$, $B$ and $C$ defined below are finite:
\beq \label{ABC}
A := \sup\limits_{x \in \cS(x^0)} \|\nabla f(x)\|,  \quad  B := \sup\limits_{x \in \cS(x^0)} P(x),
\quad C := \inf\limits_{x \in \R^n} P(x).
\eeq
\ei
\end{assumption}

The algorithm we consider is a nonmonotone proximal gradient method, presented as follows.

\gap

\noindent
{\bf Algorithm 1: Nonmonotone proximal gradient (NPG) method for \eqref{f+phi}} \\ [5pt]
Let $x^0$ be given in Assumption \ref{assump-fphi}. Choose $L_{\max} \ge L_{\min} >0$,
$\tau>1$, $c>0$ and an integer $M \ge 0$ arbitrarily. Set $k=0$.
\begin{itemize}
\item[1)] Choose $L^0_k \in [L_{\min}, L_{\max}]$ arbitrarily. Set $L_k = L^0_k$.
\bi
\item[1a)] Solve the subproblem
\beq \label{prox-subprob}
u \in \Argmin_x \left\{\langle \nabla f(x^k), x-x^k \rangle +
\frac{L_k}{2}\|x-x^k\|^2 + P(x)\right\}.\footnote{This problem has at least one
optimal solution due to Assumption \ref{assump-fphi} (ii) and (iv).}
\eeq
\item[1b)] If
\beq \label{descent}
F(u) \le \max_{[k-M]_+ \le i \le k} F(x^i) - \frac{c}{2} \|u-x^k\|^2
\eeq
is satisfied, then go to step 2).
\item[1c)] Set $L_k \leftarrow \tau L_k$ and go to step 1a).
\ei
\item[2)]
Set $x^{k+1}\leftarrow u$, $\bar L_k\leftarrow L_k$, $k \leftarrow k+1$ and go to step 1).
\end{itemize}
\noindent
{\bf end}

Although the NPG method has been analyzed in \cite{WrNoFi09}, the analysis there relies on the
assumption that $\nabla f$ is globally Lipschitz continuous in $\R^n$. In our Assumption~\ref{assump-fphi},
$\nabla f$ is, however, not necessarily globally Lipschitz continuous and thus the analysis in \cite{WrNoFi09}  does not apply directly to problem \eqref{f+phi}.
We next show that the NPG method is still convergent for problem \eqref{f+phi} under Assumption~\ref{assump-fphi}.
%Nevertheless, the convergence of the NPG method can still be established under Assumption~\ref{assump-fphi}, as we show below.

\begin{proposition} \label{inner-convergence}
Let $x^k$ be the approximate solution generated at the end of the
$k$th iteration, and let
\beq \label{underL}
\bar L := \max\{L_{\max},\tau\underline L, \tau(L_f +c)\}, \quad\quad \underline L :=  \frac{2A\Delta+2(B-C)}{\Delta^2},
\eeq
where $A$, $B$, $C$ and $\Delta$ are given in Assumption \ref{assump-fphi}.
Under Assumption \ref{assump-fphi}, there hold:
\begin{itemize}
\item[(i)] $x^{k+1}$ is well defined and $F(x^{k+1}) \le F(x^0)$ for all $k \ge 0$;
\item[(ii)] $\bar L_k$ is well defined and satisfies $\bar L_k \le \bar L$ for all $k \ge 0$.
\item[(iii)] For each $k \ge 0$, the inner termination criterion \eqref{descent} is satisfied after
at most
\[
\left\lfloor \frac{\log( \bar L)-\log(L_{\min})}{\log \tau} +1\right\rfloor
\]
inner iterations.
\end{itemize}
\end{proposition}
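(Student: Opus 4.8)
The plan is an induction on the outer counter $k$: I will maintain the hypothesis that $x^k\in\cS(x^0)$ at the start of iteration $k$, prove the three claims for this $k$, and close the loop by deducing $x^{k+1}\in\cS(x^0)$. Fix $k$ with $x^k\in\cS(x^0)$. First, the subproblem \eqref{prox-subprob} has at least one minimizer: its objective is proper and lower semicontinuous by Assumption~\ref{assump-fphi}(ii), and it is coercive because $P$ is bounded below by the finite constant $C$ from Assumption~\ref{assump-fphi}(iv) while the quadratic term dominates; moreover $x^k\in\cS(x^0)\subseteq\cU(x^0;\Delta)$, so $\nabla f(x^k)$ is well defined with $\|\nabla f(x^k)\|\le A$. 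It then remains to show the inner loop (steps~1a)--1c)) terminates, which is the heart of the argument.

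The crucial step is to control the length of the trial step \emph{before} invoking any Lipschitz property of $\nabla f$. For any trial value $L_k$ and corresponding minimizer $u$ of \eqref{prox-subprob}, comparing the subproblem objective at $u$ with its value at $x^k$ and using $P(x^k)\le B$, $P(u)\ge C$, $\|\nabla f(x^k)\|\le A$ gives
\[
\frac{L_k}{2}\|u-x^k\|^2\;\le\;P(x^k)-P(u)-\langle\nabla f(x^k),u-x^k\rangle\;\le\;(B-C)+A\|u-x^k\|,
\]
a quadratic inequality in $r:=\|u-x^k\|$. Since $\underline L$ in \eqref{underL} is exactly the value making $\frac{\underline L}{2}\Delta^2=A\Delta+(B-C)$, the quadratic $\frac{L_k}{2}r^2-Ar-(B-C)$ is nonnegative at $r=\Delta$ whenever $L_k\ge\underline L$; as it is nonpositive at the actual value $r$ and is positive only beyond its unique nonnegative root, we obtain $\|u-x^k\|\le\Delta$, so the whole segment $[x^k,u]$ lies in $\cU(x^0;\Delta)$.

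Now suppose in addition $L_k\ge L_f+c$. Because $[x^k,u]\subseteq\cU(x^0;\Delta)$, the standard descent lemma applies on this segment (via \eqref{lipschitz}), giving $f(u)\le f(x^k)+\langle\nabla f(x^k),u-x^k\rangle+\frac{L_f}{2}\|u-x^k\|^2$; adding $P(u)$, invoking the optimality of $u$ for \eqref{prox-subprob} compared with $x^k$, and using $L_k\ge L_f+c$ yields
\[
F(u)\;\le\;F(x^k)-\frac{c}{2}\|u-x^k\|^2\;\le\;\max_{[k-M]_+\le i\le k}F(x^i)-\frac{c}{2}\|u-x^k\|^2,
\]
so the acceptance test \eqref{descent} holds. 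Hence the inner loop stops no later than the first trial value with $L_k\ge\max\{\underline L,L_f+c\}$; since $L_k^0\in[L_{\min},L_{\max}]$ and $L_k$ is multiplied by $\tau$ between trials, the accepted value obeys $\bar L_k\le\max\{L_{\max},\tau\underline L,\tau(L_f+c)\}=\bar L$, which is (ii), and counting the geometric progression from $L_k^0\ge L_{\min}$ to at most $\bar L$ gives the iteration bound in (iii). Finally, the last display together with the inductive hypothesis $F(x^i)\le F(x^0)$ for $i\le k$ gives $F(x^{k+1})=F(u)\le F(x^0)$, completing (i) and re-establishing $x^{k+1}\in\cS(x^0)$, which closes the induction.

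The main obstacle is precisely the estimate in the second paragraph: because $\nabla f$ is only assumed Lipschitz on the enlarged level set $\cU(x^0;\Delta)$ rather than globally, one cannot apply the descent lemma until the trial point $u$ has been shown to lie in $\cU(x^0;\Delta)$; the quadratic bound on $\|u-x^k\|$ and the tailored definition of $\underline L$ are what make this work, and this is the place where finiteness of $A$, $B$, $C$ in Assumption~\ref{assump-fphi}(iv) enters.
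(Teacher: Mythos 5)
Your proof is correct and follows essentially the same route as the paper: bound the trial step length by $\Delta$ via the quadratic inequality coming from the subproblem's optimality (your sign argument at $r=\Delta$ is just a repackaging of the paper's explicit root bound $\|x^{k+1}(L)-x^k\|\le \frac{A+\sqrt{A^2+2L(B-C)}}{L}\le\Delta$ for $L\ge\underline L$), then apply the descent lemma on $\cU(x^0;\Delta)$ to get acceptance once $L_k\ge\max\{\underline L,L_f+c\}$, and close the induction. The only nitpick is that your inductive hypothesis should be stated as $x^0,\dots,x^k\in\cS(x^0)$ (not just $x^k$), since the nonmonotone test \eqref{descent} references the window $\max_{[k-M]_+\le i\le k}F(x^i)$ — which is in fact the form you use when you invoke $F(x^i)\le F(x^0)$ for all $i\le k$.
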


\begin{proof}
For convenience, whenever $x^k$ is well defined with $F(x^k)\le F(x^0)$, set
\beq \label{xkL}
x^{k+1}(L) \in  \Argmin_{x \in \R^n} \left\{\langle \nabla f(x^k), x-x^k \rangle +
\frac{L}{2}\|x-x^k\|^2 + P(x)\right\} \quad\quad \forall L>0.
\eeq
By \eqref{xkL}, one can then observe that
\[
\langle \nabla f(x^k), x^{k+1}(L)-x^k \rangle + P(x^{k+1}(L)) + \frac{L}{2} \|x^{k+1}(L)-x^k\|^2 \le P(x^k),
\]
which along with \eqref{ABC}  yields
\[
\frac{L}{2} \|x^{k+1}(L)-x^k\|^2 - \|\nabla f(x^k)\| \|x^{k+1}(L)-x^k\| +C - P(x^k) \le 0.
\]
Hence, we obtain that
\beq \label{distance}
\|x^{k+1}(L)-x^k\| \le  \frac{\|\nabla f(x^k)\|+\sqrt{\|\nabla f(x^k)\|^2+2L(P(x^k)-C)}}{L}.
\eeq

We now prove statements (i) and (ii) by induction. Indeed, for $k=0$, we know that $x^0\in \cS(x^0)$.
Using this relation, \eqref{ABC} and \eqref{distance} with $k=0$, one can have
\[
\|x^1(L)-x^0\| \le  \frac{A+\sqrt{A^2+2L(B-C)}}{L}.
\]
In view of this inequality and \eqref{underL}, it is not hard to verify that
\[
\|x^1(L)-x^0\| \le \Delta, \quad\quad \forall L \ge \underline L.
\]
Using this relation and \eqref{lipschitz}, we have
\[
f(x^1(L)) \le f(x^0) + \langle \nabla f(x^0), x^1(L)-x^0\rangle + \frac{L_f}{2} \|x^1(L)-x^0\|^2, \quad\quad \forall
L \ge \underline L.
\]
It follows from this relation and \eqref{xkL} that for all $L \ge \underline L$,
\[
\ba{rl}
&F(x^1(L)) = f(x^1(L)) + P(x^1(L)) \\ [8pt]
&\le f(x^0) + \langle \nabla f(x^0), x^1(L)-x^0\rangle + \frac{L_f}{2} \|x^1(L)-x^0\|^2 + P(x^1(L)) \\ [8pt]
& =  f(x^0) + \langle \nabla f(x^0), x^1(L)-x^0\rangle + \frac{L}{2} \|x^1(L)-x^0\|^2 + P(x^1(L)) + \frac{L_f-L}{2} \|x^1(L)-x^0\|^2 \\ [8pt]
&\le  f(x^0) + P(x^0) + \frac{L_f-L}{2} \|x^1(L)-x^0\|^2 \ = \ F(x^0) + \frac{L_f-L}{2} \|x^1(L)-x^0\|^2,
\ea
\]
where the second inequality follows from \eqref{xkL}. Using this relation, one can immediately
observe that
\begin{equation}\label{fineq}
F(x^1(L)) \le F(x^0) - \frac{c}{2}\|x^1(L)-x^0\|^2, \quad\quad \forall L \ge \hat L,
\end{equation}
where
\[
\hat L :=  \max\{\underline L, L_f +c\}.
\]
This shows that \eqref{descent} must be satisfied after finitely many inner iterations. Moreover, from the definition of $\bar L_0$, we
must have  either $\bar L_0 =L^0_0$ or $\bar L_0/\tau < \hat L$. This together with $L^0_0 \le L_{\max}$ implies
$\bar L_0 \le \max\{L_{\max},\tau \hat L\}$, and hence statement (ii) holds for $k=0$. We also see from \eqref{fineq} that $F(x^1) = F(x^1(\bar L_0))\le F(x^0)$. Hence, statement (i) also holds for $k = 0$.

We now suppose that statements (i) and
(ii) hold for all $k \le K$ for some $K \ge 0$. It remains to show that they also hold for $k=K+1$. Indeed, using the
induction hypothesis, we have $x^{K+1} \in \cS(x^0)$. In view of
this relation and a similar argument as for $k=0$, one can show that statement (ii) holds for $k=K+1$.
By the induction hypothesis, we know that $F(x^{k+1}) \le F(x^0)$ for all $k \le K$. Using this
relation and \eqref{descent} with $k=K+1$, one can conclude that $F(x^{K+2}) \le F(x^0)$ and hence statement (i) holds for $k=K+1$.
This completes the induction.

Finally we prove statement (iii). Let $n_k$ denote the total number of inner iterations executed at the $k$th outer iteration.
One can observe that
\[
L_{\min} \tau^{n_k-1} \le L^0_k \tau^{n_k-1} = \bar L_k.
\]
The conclusion then immediately follows from this relation and statement (ii).
\end{proof}

\gap

We end our discussion with a convergence result for the NPG method, which can be proved similarly as in \cite[Lemma~4]{WrNoFi09}.% {\color{red} TK: check}

\begin{theorem} \label{outer-convergence}
Let $x^k$ be the approximate solution generated at the end of
the $k$th iteration. Under Assumption \ref{assump-fphi}, there holds $\|x^{k+1}-x^k\| \to 0$
as $k \to \infty$.
\end{theorem}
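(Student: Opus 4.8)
The plan is to adapt the standard convergence argument for nonmonotone proximal gradient methods used in \cite[Lemma~4]{WrNoFi09} to the present setting. The facts already in hand are: by Proposition~\ref{inner-convergence}(i), each $x^{k+1}$ is well defined, lies in $\cS(x^0)$, and satisfies $F(x^{k+1})\le F(x^0)$; the nonmonotone sufficient-decrease inequality \eqref{descent} holds at every iteration; and by Assumption~\ref{assump-fphi}(iii), $F$ is bounded below and uniformly continuous on $\cS(x^0)$. I would first introduce the running maximum $D_k := \max_{[k-M]_+\le i\le k}F(x^i)$ and pick an index $\ell(k)\in\{[k-M]_+,\dots,k\}$ with $F(x^{\ell(k)})=D_k$. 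From \eqref{descent} we get $F(x^{k+1})\le D_k-\frac c2\|x^{k+1}-x^k\|^2\le D_k$, and since $[k+1-M]_+\ge[k-M]_+$ this yields $D_{k+1}\le D_k$; being nonincreasing and bounded below (by $\inf_{\cS(x^0)}F>-\infty$), the sequence $\{D_k\}$ converges to some $\bar D$. Note also $\ell(k)\ge k-M\to\infty$.

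Next I would prove by induction on $j\ge 0$ the statement $P(j)$: $\|x^{\ell(k)-j}-x^{\ell(k)-j-1}\|\to 0$ and $F(x^{\ell(k)-j})\to\bar D$ as $k\to\infty$ (all indices appearing are eventually at least $1$ since $\ell(k)\to\infty$). For the base case $P(0)$, applying \eqref{descent} at the iteration index $\ell(k)-1$ gives $\frac c2\|x^{\ell(k)}-x^{\ell(k)-1}\|^2\le D_{\ell(k)-1}-D_k\to 0$, while $F(x^{\ell(k)})=D_k\to\bar D$. For the inductive step, $P(j)$ together with the uniform continuity of $F$ on $\cS(x^0)$ (all iterates lie in $\cS(x^0)$) gives $F(x^{\ell(k)-j-1})\to\bar D$; then applying \eqref{descent} at the iteration index $\ell(k)-j-2$ yields $\frac c2\|x^{\ell(k)-j-1}-x^{\ell(k)-j-2}\|^2\le D_{\ell(k)-j-2}-F(x^{\ell(k)-j-1})\to 0$, which establishes $P(j+1)$.

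Finally, to pass from these shifted subsequences to consecutive iterates, I would fix $k$ and set $m=k+M+1$, so that $\ell(m)\in\{k+1,\dots,k+M+1\}$, hence $\ell(m)-j_k=k+1$ for some $j_k\in\{0,\dots,M\}$. Then $\|x^{k+1}-x^k\|=\|x^{\ell(m)-j_k}-x^{\ell(m)-j_k-1}\|\le\max_{0\le j\le M}\|x^{\ell(m)-j}-x^{\ell(m)-j-1}\|$, and the right-hand side tends to $0$ as $k\to\infty$ because it is a maximum over the finitely many null sequences controlled by $P(0),\dots,P(M)$. This gives $\|x^{k+1}-x^k\|\to 0$.

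The main obstacle I anticipate is the bookkeeping around the nonmonotone window: making sure all the shifted indices $\ell(k)-j$ are admissible, correctly propagating the decrease information backward through the window via the nested induction, and invoking uniform continuity of $F$ (rather than mere continuity on a compact set) at precisely the point where one deduces $F(x^{\ell(k)-j-1})\to\bar D$ from $P(j)$. Once this structure is set up, every individual estimate follows immediately from \eqref{descent} and the convergence of $\{D_k\}$.
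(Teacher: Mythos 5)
Your argument is correct and is exactly the one the paper has in mind: the paper omits the proof, deferring to \cite[Lemma~4]{WrNoFi09}, and your write-up is a faithful adaptation of that standard nonmonotone-window induction, correctly using the uniform continuity of $F$ on $\cS(x^0)$ from Assumption~\ref{assump-fphi}(iii) in place of the global Lipschitz hypothesis of the original. No gaps; the index bookkeeping ($\ell(k)\to\infty$, monotonicity of $D_k$, and the final reindexing via $m=k+M+1$) all checks out.
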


\section{Proof of Theorem~\ref{pm-convergence}}\label{appendix:B}

In this section, we present the proof of Theorem~\ref{pm-convergence}.

\begin{proof}
(i) By Proposition \ref{inner-convergence}, we know that $F_{\lambda_k,\mu_k}(x^k) \le F_{\lambda_k,\mu_k}(x^{k,0})$.
In addition, from step 1) of the above penalty method, one has $F_{\lambda_k,\mu_k}(x^{k,0}) \le F_{\lambda_k,\mu_k}(\xfeas)$.
It then follows that $F_{\lambda_k,\mu_k}(x^k) \le F_{\lambda_k,\mu_k}(\xfeas)$. Using this relation along with \eqref{f-prop}
and the facts that $\|A\xfeas-b\| \le \sigma$ and $B\xfeas \le h$, one can have
\[
\Phi(x^k) \ \le \ F_{\lambda_k,\mu_k}(x^k) \ \le \ F_{\lambda_k,\mu_k}(\xfeas) = \Phi(\xfeas).
\]
Moreover, we also have $x^k\in S_1$ from the definition.
Hence, $\{x^k\}$ is bounded since $\Phi + \delta_{S_1}$ has bounded level sets.

(ii) Let $x^*$ be an accumulation point of $\{x^k\}$. Then there exists a subsequence
$\{x^k\}_\cK \to x^*$. Using $F_{\lambda_k,\mu_k}(x^k) \le F_{\lambda_k,\mu_k}(\xfeas)$, \eqref{f-prop} and
the definition of $F_{\lambda,\mu}$, we have
\[
\ba{lcl}
\lambda_k (\|Ax^k - b\|^2 - \sigma^2)_+ &+& \lambda_k \|(B x^k - h)_+\|_1\le f_{\lambda_k,\mu_k}(x^k) + \frac{\ell + 1}2\lambda_k \mu_k\\ [8pt]
& \le& \
F_{\lambda_k,\mu_k}(x^k) + \frac{\ell + 1}2\lambda_k \mu_k
 \le  F_{\lambda_k,\mu_k}(\xfeas) + \frac{\ell + 1}2\lambda_k \mu_k  \\ [8pt] &=& \  \Phi(\xfeas) + \frac{\ell + 1}2\lambda_k \mu_k.
\ea
\]
It then follows that
\[
(\|Ax^k - b\|^2 - \sigma^2)_+ +  \|(B x^k - h)_+\|_1 \le \ \frac{\Phi(\xfeas)}{\lambda_k} + \frac{\ell + 1}2\mu_k.
\]
Taking limits on both sides of this inequality as $k\in \cK \to \infty$, one has $(\|Ax^* - b\|^2 - \sigma^2)_+ \le 0$ and $\|(B x^* - h)_+\|_1\le 0$.
Hence $x^*$ is a feasible point of problem \eqref{eq:P0}.

(iii) Let $I_* := \{i:\; (Bx^* - h)_i = 0\}$. Then $(Bx^*)_i < h_i$ for all $i\notin I_*$ and we have
\[
\cN_{B\cdot \le h}(x^*) = \left\{\sum_{i\in I_*}y_i b_i:\; y\ge 0\right\},
\]
where $b_i$ denotes the column vector formed from the $i$th row of $B$.
Moreover, for all sufficiently large $k\in \cK$, we have $(Bx^k)_i < h_i$ for all $i\notin I_*$. Using this and \eqref{h-deriv}, we have $w^k_i := h'_{\lambda_k,\mu_k}([Bx^k - h]_i) = 0$ for $i\notin I_*$ and all sufficiently large $k$. This together with \eqref{inner-cond} and \eqref{f-deriv} implies that for all $k\in\cK$ sufficiently large, there exists $\xi^k\in \partial (\Phi + \delta_{S_1})(x^k)$ so that
\begin{equation}\label{relation1}
\left\|2 h'_{\lambda_k,\mu_k}(\|Ax^k - b\|^2 - \sigma^2) A^T(Ax^k-b) + \xi^k + \sum_{i\in I_*}w_i^kb_i\right\| \le \ \epsilon_k.
\end{equation}
We consider two different cases: $\|Ax^* - b\| < \sigma$ or $\|Ax^* - b\| = \sigma$.

{\bf Case 1.} Suppose first that $x^*$ satisfies $\|Ax^* - b\| < \sigma$. Then $\|Ax^k-b\|<\sigma$ for all sufficiently large $k\in\cK$.
Using this relation and \eqref{h-deriv}, we have $h'_{\lambda_k,\mu_k}(\|Ax^k - b\|^2 - \sigma^2)=0$ for all sufficiently
large $k\in\cK$. Hence, the relation \eqref{relation1} reduces to
\begin{equation}\label{relation2}
\left\|\xi^k + \sum_{i\in I_*}w_i^kb_i\right\| \le \ \epsilon_k.
\end{equation}
We suppose to the contrary that $\|\xi^k\|$ is unbounded. Without loss of generality, assume that $\{\|\xi^k\|\}_{\cK}\to \infty$ and that $\lim\limits_{k\in \cK}\frac{\xi^k}{\|\xi^k\|} = \xi^*$ for some $\xi^*$. Divide both sides of \eqref{relation2} by $\|\xi^k\|$ and pass to the limit, making use of $\epsilon_k\to 0$, \eqref{outersemi} and the closeness of the conical hull of the finite set $\{b_i:\;i\in I_*\}$, we see further that $\xi^*\in \partial^{^\infty}\!\!(\Phi + \delta_{S_1})(x^*)$ and
\[
-\xi^* \in \left\{\sum_{i\in I_*}y_i b_i:\; y\ge 0\right\} = \cN_{B\cdot \le h}(x^*) = \cN_{S_2}(x^*),
\]
where the second equality follows from the fact that $\|Ax^* - b\|< \sigma$. Since $\|\xi^*\| = 1$, this is a contradiction to \eqref{CQ0}. This shows that $\|\xi^k\|$ is bounded. By passing to the limit along a convergent subsequence in \eqref{relation2}, using \eqref{outersemi} and the closedness of finitely generated cones, we obtain
\[
0 \in \partial (\Phi + \delta_{S_1})(x^*) + \left\{\sum_{i\in I_*}y_i b_i:\; y\ge 0\right\} = \partial (\Phi + \delta_{S_1})(x^*) + \cN_{S_2}(x^*),
\]
i.e., $x^*$ is a KKT point of \eqref{eq:P0}.

{\bf Case 2.} Suppose now that $x^*$ satisfies $\|Ax^* - b\| = \sigma$. Observe from \eqref{h-deriv} that $h'_{\lambda_k,\mu_k}(\|Ax^k - b\|^2 - \sigma^2)\ge 0$ for all $k$. Let $t_k := 2h'_{\lambda_k,\mu_k}(\|Ax^k - b\|^2 - \sigma^2)$ for notational simplicity, and suppose for
contradiction that the sequence $\{\|\xi^k\|\}_\cK$ is unbounded. Without loss of generality,
assume that $\{\|\xi^k\|\}_\cK \to \infty$. It follows from \eqref{relation1} that
\beq \label{inner-cond1}
\left\| \frac{t_k}{\|\xi^k\|}A^T(Ax^k-b) + \frac{1}{\|\xi^k\|}\xi^k + \sum_{i\in I_*}\frac{w_i^k}{\|\xi^k\|}b_i\right\| \ \le \ \frac{\epsilon_k}{\|\xi^k\|}.
\eeq
We claim that $\{\frac{t_k}{\|\xi^k\|}\}_{\cK}$ is bounded. Suppose to the contrary and without loss of generality that $\{\frac{t_k}{\|\xi^k\|}\}_{\cK}\to \infty$. Dividing both sides of \eqref{inner-cond1} by $\frac{t_k}{\|\xi^k\|}$, passing to the limit and using the closedness of finitely generated cones, we see that
\begin{equation}\label{forcontra}
0\in A^T(Ax^* - b) + \cN_{B\cdot\le h}(x^*).
\end{equation}
This means that $x^*$ is an optimal solution of the problem
\begin{equation*}
  \begin{array}{rl}
    \min\limits_{x} & \frac12 \|Ax - b\|^2\\
    {\rm s.t.} & Bx \le h.
  \end{array}
\end{equation*}
Since $\|Ax^* - b\| = \sigma$, this contradicts our assumption that there is $x_0\in S$ with $\|Ax_0 - b\| < \sigma$. This contradiction shows that $\{\frac{t_k}{\|\xi^k\|}\}_{\cK}$ is bounded. By passing to a further subsequence if necessary, we may now assume without loss of generality that
\[
\lim_{k\in \cK}\frac{t_k}{\|\xi^k\|} = t_*,\ \ {\rm and}\ \ \lim_{k\in \cK}\frac{\xi^k}{\|\xi^k\|} = \xi^*.
\]
Note that $\xi^*\in \partial^{^\infty}\!\!(\Phi + \delta_{S_1})(x^*)$ due to \eqref{outersemi}.
Taking limit on both sides of \eqref{inner-cond1} along this subsequence and making use again of the closedness of finitely generated cones, we see further that
\begin{equation}\label{haha}
-\xi^* \in t_* A^T(Ax^* - b) + \left\{\sum_{i\in I_*}y_i b_i:\; y\ge 0\right\}\subseteq \cN_{\|A\cdot - b\|\le \sigma}(x^*) + \cN_{B\cdot \le h}(x^*) = \cN_{S_2}(x^*),
\end{equation}
where the set inclusion follows from the fact that $\|Ax^* - b\| = \sigma$ and the existence of $x_0\in S$ with $\|Ax_0 - b\| < \sigma$; this latter condition also gives the last equality in \eqref{haha}. Since $\|\xi^*\| = 1$, the relation \eqref{haha} together with $\xi^*\in \partial^{^\infty}\!\!(\Phi + \delta_{S_1})(x^*)$ contradicts \eqref{CQ0}. Thus, the sequence $\{\|\xi^k\|\}_\cK$ is bounded.

Next, we claim that $\{t_k\}_{\cK}$ is bounded. Assume again to the contrary that $\{t_k\}_{\cK}$ is unbounded and assume without loss of generality that $\{t_k\}_{\cK}\to \infty$.
From \eqref{relation1}, we have
\beq \label{inner-cond2}
\left\|A^T(Ax^k-b) + \frac{1}{t_k}\xi^k + \sum_{i\in I_*}\frac{w_i^k}{t_k}b_i\right\| \ \le \ \frac{\epsilon_k}{t_k}.
\eeq
Passing to the limit in \eqref{inner-cond2} and using the boundedness of $\xi^k$ as well as the closedness of finitely generated cones, we arrive at \eqref{forcontra}. A contradiction can then be derived similarly as before. Thus, we conclude that $\{t_k\}_{\cK}$ is bounded.

Let $\pi^*$ be an accumulation point of $\{t_k\}_\cK$. Without loss of generality,
assume that $\{t_k\}_\cK \to \pi^*$. Since $t_k\ge 0$
for all $k$, one has $\pi^* \ge 0$. Taking
limits on both sides of \eqref{relation1} as $k\in\cK \to \infty$, invoking \eqref{outersemi}, the boundedness of $\{\xi^k\}_{k\in \cK}$ and the closedness of finitely generated cones, one can see that
\[
0\in \pi^* A^T(Ax^* - b) + \partial (\Phi + \delta_{S_1})(x^*) + \cN_{B\cdot \le h}(x^*) \subseteq \partial (\Phi + \delta_{S_1})(x^*) + \cN_{S_2}(x^*).
\]
This shows that $x^*$ is a KKT point of \eqref{eq:P0}.
\end{proof}

\end{document}